\theoremstyle{plain}
\newtheorem{theorem}{Theorem}[section]
\newtheorem{corollary}[theorem]{Corollary}
\newtheorem{lemma}[theorem]{Lemma}
\theoremstyle{definition}
\newtheorem*{remark}{Remark}
\newcommand{\K}{\mathbb{K}}
\renewcommand{\L}{\mathbb{L}}
\newcommand{\Li}{\operatorname{Li}}
\newcommand{\Ei}{\operatorname{Ei}}
\newcommand{\Gal}{\operatorname{Gal}}
\newcommand{\imod}[1]{\,\left(\textnormal{mod }#1\right)\,}
\begin{document}

\title[Conditional PNT4PAP]{The prime number theorem for primes in arithmetic progressions at large values}

\author[E.~S.~Lee]{Ethan Simpson Lee}
\address{University of Bristol, School of Mathematics, Fry Building, Woodland Road, Bristol, BS8 1UG} 
\email{ethan.lee@bristol.ac.uk}
\urladdr{\url{https://sites.google.com/view/ethansleemath/home}}

\thanks{ESL thanks Oleksiy Klurman, Aleksander Simoni\v{c}, and Tim Trudgian for valuable feedback, and the Heilbronn Institute for Mathematical Research for their support. He also thanks Dave Platt for evaluating some sums over zeros up to a small height (see \eqref{eqn:thank_Dave}).}

\subjclass[2010]{11N13, 11Y35, 11B25, 11M26}

\hypersetup{
pdftitle={\@title},
pdfauthor={Ethan Simpson Lee},
pdfsubject={Number Theory},
pdfkeywords={prime number theorem;arithmetic progressions;GRH;Riemann hypothesis}
}

\maketitle

\begin{abstract}
    Assuming the Riemann hypothesis, we prove the latest explicit version of the prime number theorem for short intervals. Using this result, and assuming the generalised Riemann hypothesis for Dirichlet $L$-functions is true, we then establish explicit formulae for $\psi(x,\chi)$, $\theta(x,\chi)$, and an explicit version of the prime number theorem for primes in arithmetic progressions that hold for general moduli $q\geq 3$. Finally, we restrict our attention to $q\leq 10\,000$ and use an exact computation to refine these results.
\end{abstract}

\section{Introduction}

Suppose that $x\geq 2$, $p$ are prime numbers, $\chi$ is a Dirichlet character modulo $q\geq 3$, 
\begin{equation*}
    \psi(x,\chi) = \sum_{n\leq x} \chi(n)\Lambda(n),
    \quad\text{and}\quad
    \psi_1(x,\chi) 
    = \int_0^x \psi(t,\chi)\,dt
    = \sum_{n\leq x} \chi(n) \Lambda(n) (x-n).
\end{equation*}
The purpose of this paper is to prove the latest explicit and conditional version of the prime number theorem for primes in arithmetic progressions, which is a collection of asymptotic estimates for
\begin{equation*}
    \pi(x;q,a) 
    = \sum_{\substack{p \leq x \\ p \equiv a \imod q}} 1,
    \quad
    \theta(x;q,a) 
    = \sum_{\substack{p \leq x \\ p \equiv a \imod q}} \log{p},
    \quad\text{and}\quad
    \psi(x;q,a) 
    = \sum_{\substack{n \leq x \\ n \equiv a \imod q}} \Lambda(n) ,
\end{equation*}
in which $0\leq a < q$ is an integer such that $(a,q)=1$. Explicit bounds for each of these counting functions is a natural consequence of an explicit bound for $\psi(x,\chi)$, since
\begin{equation}\label{eqn:convenient}
    \psi(x;q,a) = \varphi(q)^{-1} \sum_{\chi}\overline{\chi}(a)\psi(x,\chi) ,
\end{equation}
where $\varphi(q)$ is the Euler-totient function evaluated at $q$;
we will see how a result for $\psi(x;q,a)$ naturally leads to results for $\theta(x;q,a)$ and $\pi(x;q,a)$ in Section \ref{sec:corollary2}. Effective versions of the prime number theorem for primes in arithmetic progressions are a key ingredient in many applications, such as effective versions of Chen's theorem; see \cite{Bordignon_Chen1, BordignonJohnstonStarichkova_Chen, BordignonStarichkova_ChenGRH}.

It is a straightforward application of \cite[(6.2)]{Schoenfeld}, to find effective bounds for $\psi(x,\chi_0)$; see \eqref{eqn:principal_chi_case} and recall that $\chi_0$ is standard notation for the \textit{principal} character. Therefore, the main challenge in this paper is to find good explicit bounds for $\psi(x,\chi)$ such that $\chi\neq\chi_0$, which we do in Section \ref{sec:twisted_psi}. Once this is done, we use these bounds to prove the following result which is our explicit version of the prime number theorem for primes in arithmetic progressions.

\begin{corollary}\label{cor:pntpaps}
If the GRH is true, $(a,q)=1$, $x\geq x_0 \geq \max\{e^{10},q\}$, and $q\geq 3$, then Table \ref{tab:pntpaps} presents constants $a_i$ such that
\begin{align}
    \left|\pi(x;q,a) - \frac{\Li(x)}{\varphi(q)}\right|
    &< \left(\frac{\log{x}}{8\pi} + \frac{a_1 \log{q}}{2\pi} + a_2\right) \sqrt{x} + a_3 , \label{eqn:pi_xqa_result}\\
    \left|\theta(x;q,a) - \frac{x}{\varphi(q)}\right|
    &< \left(\frac{\log{x}}{8\pi} + \frac{\log{q}}{2\pi} + a_4\right) \sqrt{x}\log{x} + a_5 , \nonumber\\
    \left|\psi(x;q,a) - \frac{x}{\varphi(q)}\right|
    &< \left(\frac{\log{x}}{8\pi} + \frac{\log{q}}{2\pi} + a_6\right) \sqrt{x}\log{x} + a_5 . \nonumber
\end{align}
\end{corollary}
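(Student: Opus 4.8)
The plan is to derive Corollary~\ref{cor:pntpaps} from the bounds on $\psi(x,\chi)$ for non-principal $\chi$ established in Section~\ref{sec:twisted_psi}, together with the easy bound for the principal character. First I would start from the decomposition \eqref{eqn:convenient} and separate off the principal term: writing
\begin{equation*}
    \psi(x;q,a) - \frac{\psi(x,\chi_0)}{\varphi(q)}
    = \frac{1}{\varphi(q)} \sum_{\chi \neq \chi_0} \overline{\chi}(a) \psi(x,\chi),
\end{equation*}
and then using the consequence of \cite[(6.2)]{Schoenfeld} recorded at \eqref{eqn:principal_chi_case} to compare $\psi(x,\chi_0)$ with $\psi(x) = \psi(x;1,1)$ and in turn with $x$, picking up an error controlled by the prime powers dividing $q$ (at most $\log q \cdot \log x / \log 2$ crudely, or the sharper form from Schoenfeld's explicit PNT under RH). For the non-principal characters I would insert the explicit bound from Section~\ref{sec:twisted_psi}, which under GRH should have the shape $|\psi(x,\chi)| \leq \left(\tfrac{\log x}{8\pi} + c\log q + \cdots\right)\sqrt{x}\log x$ for each individual $\chi$; summing $\overline\chi(a)$ against this over the $\varphi(q)-1$ non-principal characters, bounding $|\overline\chi(a)| \leq 1$, and dividing by $\varphi(q)$ recovers a main term of the same shape with no loss in the $\varphi(q)$, since the leading dependence on $q$ comes only through $\log q$ after the trivial termwise bound. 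This yields the $\psi(x;q,a)$ line with explicit $a_6$ and $a_5$; one reads off the numerical constants from Table~\ref{tab:pntpaps} by tracking the lower-order contributions at $x = x_0 \geq \max\{e^{10},q\}$.

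Next I would pass from $\psi$ to $\theta$ via $\psi(x;q,a) - \theta(x;q,a) = \sum_{k\geq 2}\theta(x^{1/k};q,a)$, which is nonnegative and bounded above by $\theta(\sqrt x) + O(x^{1/3}\log^2 x) \leq \sqrt x + (\text{small})$ using Schoenfeld-type explicit bounds for $\theta$ on the full set of primes; this absorbs into the $\tfrac{\log x}{8\pi}\sqrt x \log x$ term and possibly adjusts $a_5$, giving the $\theta(x;q,a)$ line with $a_4 \leq a_6$. Finally, for $\pi(x;q,a)$ I would use partial summation, $\pi(x;q,a) = \dfrac{\theta(x;q,a)}{\log x} + \displaystyle\int_2^x \dfrac{\theta(t;q,a)}{t\log^2 t}\,dt$, together with $\dfrac{\Li(x)}{\varphi(q)} = \dfrac{x/\varphi(q)}{\log x} + \displaystyle\int_2^x \dfrac{t/\varphi(q)}{t\log^2 t}\,dt + O(1)$; substituting the $\theta$-bound, the $\sqrt x \log x$ term divided by $\log x$ becomes $\sqrt x$, the integral of $\sqrt t\log t / (t\log^2 t)$ contributes another $O(\sqrt x/\log x)$, and one checks the coefficient of $\tfrac{\log x}{8\pi}\sqrt x$ and of $\tfrac{\log q}{2\pi}\sqrt x$ survives to give $a_1$, $a_2$, $a_3$ as tabulated.

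The main obstacle is the bookkeeping of the explicit constants: ensuring that after summing over characters, dividing by $\varphi(q)$, passing $\psi \to \theta \to \pi$, and evaluating all lower-order terms at the threshold $x_0$, the resulting $a_i$ match Table~\ref{tab:pntpaps} and that no term secretly grows faster than $\log q$ or $\log x$ in the stated range $x \geq x_0 \geq \max\{e^{10},q\}$. In particular the prime-power corrections in the $\psi$-to-$\theta$ step and the contribution of $\chi_0$ must be shown to be genuinely of lower order than $\sqrt x \log x$ uniformly in $q \leq x$; this is where the hypothesis $x \geq q$ is used. I expect the character-sum and partial-summation steps themselves to be routine once the Section~\ref{sec:twisted_psi} bound is in hand.
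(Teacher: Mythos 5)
Your route is essentially the paper's: decompose $\psi(x;q,a)$ via \eqref{eqn:convenient}, control the principal character with \eqref{eqn:principal_chi_case}, apply the Section~\ref{sec:twisted_psi} bound for $|\psi(x,\chi)|$ with $\chi\neq\chi_0$, sum trivially over the $\varphi(q)-1$ non-principal characters using $|\overline\chi(a)|\leq 1$ and $\varphi(q)\geq 2$, and then pass $\psi\to\theta\to\pi$ by partial summation. This is precisely what Section~\ref{sec:corollary2} does.

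There are, however, a few slips in your $\psi\to\theta$ step. The displayed ``identity'' $\psi(x;q,a)-\theta(x;q,a)=\sum_{k\geq 2}\theta(x^{1/k};q,a)$ is not correct, since the left side sums $\log p$ over $p^k\equiv a\pmod q$ while the right side imposes $p\equiv a\pmod q$; what holds (and is all you need) is $0\leq\psi(x;q,a)-\theta(x;q,a)\leq\psi(x)-\theta(x)=\sum_{k\geq 2}\theta(x^{1/k})$. More substantively, your claim ``$a_4\leq a_6$'' has the inequality reversed: passing from $\psi$ to $\theta$ adds a nonnegative error and can only enlarge the secondary constant, and indeed Table~\ref{tab:pntpaps} has $a_4=a_6+1.44270$. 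The paper obtains that $1.44270$ by the crude estimate $\psi-\theta<\frac{\log x}{\log 2}\,\theta(\sqrt x)<1.44270\sqrt x\log x$ via \cite[Cor.~2.1]{Broadbent}; your sharper estimate $\psi-\theta\leq\theta(\sqrt x)+O(x^{1/3}\log x)=O(\sqrt x)$ would contribute only $O(1/\log x_0)$ to $a_4$, which is a genuine refinement but would then give smaller $a_4$ than the table (a stronger result, so the Corollary as stated still follows, but the tabled values would not be what you compute). Also, a $\sqrt x$-sized error cannot ``adjust $a_5$'' because $a_5$ is $x$-independent; it must be folded into the coefficient of $\sqrt x\log x$.
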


\begin{table}[]
    \centering
    \begin{tabular}{c|cccccc}
        $\log{x_0}$ & $a_1$ & $a_2$ & $a_3$ & $a_4$ & $a_5$ & $a_6$ \\
        \hline
        $10$ & $1.27146$ & $11.85396$ & $-1.19899\cdot 10^{4}$ & $9.29179$ & $-8.31179\cdot 10^{3}$ & $7.84909$ \\
        $20$ & $1.11315$ & $7.0938$ & $-4.55028\cdot 10^{6}$ & $6.33697$ & $-3.15402\cdot 10^{6}$ & $4.89427$ \\
        $30$ & $1.07187$ & $6.11552$ & $-1.16988\cdot 10^{9}$ & $5.66833$ & $-8.10901\cdot 10^{8}$ & $4.22563$ \\
        $40$ & $1.0528$ & $5.63974$ & $-2.37009\cdot 10^{11}$ & $5.31911$ & $-1.64282\cdot 10^{11}$ & $3.87641$ \\
        $50$ & $1.04175$ & $5.36916$ & $-4.47689\cdot 10^{13}$ & $5.11581$ & $-3.10314\cdot 10^{13}$ & $3.67311$ \\
        $60$ & $1.03453$ & $5.18036$ & $-8.06659\cdot 10^{15}$ & $4.96901$ & $-5.59133\cdot 10^{15}$ & $3.52631$ \\
        $70$ & $1.02944$ & $5.04344$ & $-1.4063\cdot 10^{18}$ & $4.86057$ & $-9.74775\cdot 10^{17}$ & $3.41787$ \\
        $80$ & $1.02566$ & $4.93893$ & $-2.36519\cdot 10^{20}$ & $4.77658$ & $-1.63943\cdot 10^{20}$ & $3.33388$ \\
        $90$ & $1.02274$ & $4.84652$ & $-3.99108\cdot 10^{22}$ & $4.69986$ & $-2.7664\cdot 10^{22}$ & $3.25716$ \\
        $100$ & $1.02042$ & $4.23696$ & $-6.61986\cdot 10^{24}$ & $4.11319$ & $-4.58854\cdot 10^{24}$ & $2.67049$ \\
        $150$ & $1.01352$ & $1.58052$ & $-7.2138\cdot 10^{35}$ & $1.52019$ & $-5.00022\cdot 10^{35}$ & $0.07749$ \\
        $200$ & $1.0101$ & $0.15187$ & $-6.88948\cdot 10^{46}$ & $0.11096$ & $-4.77543\cdot 10^{46}$ & $-1.33174$ \\
        $250$ & $1.00807$ & $-1.49591$ & $-6.36488\cdot 10^{57}$ & $-1.52341$ & $-4.4118\cdot 10^{57}$ & $-2.96611$ \\
        $500$ & $1.00402$ & $-10.73303$ & $-2.39569\cdot 10^{112}$ & $-10.72973$ & $-1.66057\cdot 10^{112}$ & $-12.17243$
    \end{tabular}
    \caption{Admissible constants for $a_i$ at several choices of $x_0$ in Corollary \ref{cor:pntpaps}.}
    \label{tab:pntpaps}
\end{table}

If $q\leq 10\,000$, then we can also refine the constants in Corollary \ref{cor:pntpaps}. Our refined result, which is presented below, follows from refinements to the key ingredients in the proof of Corollary \ref{cor:pntpaps}; most of those improvements are a consequence of the exact computation presented in \eqref{eqn:thank_Dave}. It is also important to note that we were able to remove the condition $x_0\geq q$ in this refinement.

\begin{corollary}\label{cor:pntpaps_small_moduli}
If the GRH is true, $(a,q)=1$, $x\geq x_0\geq 1.05\cdot 10^7$, and $3\leq q\leq 10\,000$, then Table \ref{tab:pntpaps_small_moduli} presents constants $\widetilde{a_i}$ such that
\begin{align}
    \left|\pi(x;q,a) - \frac{\Li(x)}{\varphi(q)}\right|
    &< \left(\frac{\log{x}}{8\pi} + \frac{\widetilde{a_1} \log{q}}{2\pi} + \widetilde{a_2}\right) \sqrt{x} + \widetilde{a_3} , \nonumber\\
    \left|\theta(x;q,a) - \frac{x}{\varphi(q)}\right|
    &< \left(\frac{\log{x}}{8\pi} + \frac{\log{q}}{2\pi} + \widetilde{a_4}\right) \sqrt{x}\log{x} + \widetilde{a_5} , \nonumber\\
    \left|\psi(x;q,a) - \frac{x}{\varphi(q)}\right|
    &< \left(\frac{\log{x}}{8\pi} + \frac{\log{q}}{2\pi} + \widetilde{a_6}\right) \sqrt{x}\log{x} + \widetilde{a_5} . \nonumber
\end{align}
Note that $a_1 = \widetilde{a_1}$ and we present values for $\log{x_0} \geq 20$ in Table \ref{tab:pntpaps_small_moduli}, because $20$ is the smallest multiple of ten greater than $\log{1.05\cdot 10^7}$.
\end{corollary}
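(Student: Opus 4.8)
The plan is to rerun the proof of Corollary~\ref{cor:pntpaps}, replacing at every point where it is used the $q$-uniform analytic estimate for the low-lying zeros of $L(s,\chi)$ by the exact data furnished in \eqref{eqn:thank_Dave}.

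Recall that the bound for $\psi(x,\chi)$ with $\chi\neq\chi_0$ established in Section~\ref{sec:twisted_psi} rests on the explicit formula for $\psi_1(x,\chi)$, whose principal term is a sum over the nontrivial zeros $\rho_\chi=\tfrac12+i\gamma_\chi$ (under the GRH) of $L(s,\chi)$, followed by the passage from $\psi_1$ to $\psi$ by averaging with an optimised parameter. Inserting \eqref{eqn:convenient} and reducing each imprimitive character modulo $q$ to the primitive character modulo some $d\mid q$ that induces it, the error for $\psi(x;q,a)$ is ultimately controlled by $\varphi(q)^{-1}\sum_{d\mid q}\sum_{\chi\bmod d,\ \text{primitive}}\bigl(\text{a convergent sum over the zeros of }L(s,\chi)\bigr)$ plus a reduced remainder. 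In Corollary~\ref{cor:pntpaps} this inner zero-sum is bounded for all $q\ge3$ by feeding an explicit bound for the zero-counting function $N(T,\chi)$ (the number of zeros $\rho_\chi$ with $|\gamma_\chi|\le T$) into partial summation; this is the only source of the constants $a_2,a_4,a_5,a_6$ beyond the $\tfrac{\log q}{2\pi}$ main term, and an audit shows it is also the only place the restriction $x_0\ge q$ is used.

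For $3\le q\le 10\,000$ I would instead split each zero-sum at the height $T_0$ up to which \eqref{eqn:thank_Dave} has rigorously evaluated the corresponding partial sums. For the zeros with $|\gamma_\chi|\le T_0$ I substitute those exact values, take the maximum over the primitive characters modulo each $d\le 10\,000$, and sum against $\varphi(d)$ over the divisors $d\mid q$; for $|\gamma_\chi|>T_0$ I retain the $N(T,\chi)$-based tail, which is now genuinely small since $T_0$ is not tiny and $q\le 10^4$. Because the $\tfrac{\log q}{2\pi}$ term is untouched, $\widetilde{a_1}=a_1$ follows immediately, and since the estimates that forced $x_0\ge q$ have been replaced the restriction may be dropped, with the new threshold $x_0\ge 1.05\cdot 10^7$ being what the remainder terms of the explicit formula require for this choice of $T_0$. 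I would finish by re-optimising the free parameters (the averaging parameter in the passage $\psi_1\to\psi$, and $T_0$) at each tabulated $x_0$, propagating the outcome through the deductions of the $\theta$- and $\pi$-bounds exactly as in Section~\ref{sec:corollary2}, and recording the resulting $\widetilde{a_i}$ in Table~\ref{tab:pntpaps_small_moduli}.

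The mathematical content is slight; the real work, and the main pitfall, lie in the bookkeeping. One must combine the exact sums of \eqref{eqn:thank_Dave} with the analytic tail in a fully rigorous (interval-arithmetic) way, carry out the reduction from characters modulo $q$ to primitive characters modulo $d\mid q$ uniformly over all $q\le 10\,000$ (correctly accounting for the Euler factors discarded when a character is imprimitive), and check that no surviving estimate quietly re-imports a dependence that would reinstate $x_0\ge q$.
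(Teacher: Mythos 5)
Your high-level plan matches the paper's: use the exact computation \eqref{eqn:thank_Dave} to replace the analytic estimate for the low-lying zeros of $L(s,\chi)$ (this is exactly how Theorem~\ref{thm:soz_result_small_moduli} refines Theorem~\ref{thm:soz_result}), feed the result through Section~\ref{sec:twisted_psi} to get Theorem~\ref{thm:psi_chi_expl_fmla_small_moduli}, and then rerun the arithmetic of Section~\ref{sec:corollary2}. The new threshold $x_0\ge 1.05\cdot 10^7$ is indeed chosen so that $\eta(x_0)=\sqrt{x_0}/\log x_0 \ge 200$, the split point, which you identify in spirit. So the route is essentially the paper's.

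However, your ``audit'' of where $x_0\ge q$ enters is wrong, and the misattribution matters. The constraint $x_0\ge q$ in Theorems~\ref{thm:smoothed_difference}--\ref{thm:psi_chi_expl_fmla} does \emph{not} come from the zero-counting estimate $N(T,\chi)$ or the resulting zero-sum bound: the statement of Theorem~\ref{thm:soz_result} already holds for $x\ge x_0\ge e^{10}$ with no reference to $q\le x$. What forces $x_0\ge q$ in Observations~II--III of the proof of Theorem~\ref{thm:smoothed_difference} is the term $g_2(q)$, arising from the bound on $\mathfrak{a}(\chi)$ (hence on $L'(0,\chi)/L(0,\chi)$ or $b_\chi$ via Lemmas~\ref{lem:Lemmabchi}--\ref{lem:logarithmicL}), which grows like $\sqrt{q}\log q$; to absorb it into terms depending only on $x$ one needs $q\le x$. (A secondary need is $\log q\le\log x$ to absorb the $k_2(x_0)\sqrt{x}\log q$ contribution when $k_2\ge 0$.) Indeed Observation~I already drops $x_0\ge q$ for all $q<10^{30}$ \emph{without} the exact zero data, simply by bounding $g_2(q)\le g_2(10^{30}-1)$; the removal of the restriction in the small-moduli corollary is a consequence of fixing $q\le 10^4$ so that $g_2(q)\le g_2(10^4)$ and $\log q\le 4\log 10$, and would hold even if $\omega$ were replaced by the weaker analytic bound. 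If you followed your plan as written, you would focus on the wrong term and miss the $\mathfrak{a}(\chi)$ bookkeeping entirely. Relatedly, your claim that the $N(T,\chi)$-based zero-sum ``is the only source of the constants $a_2,a_4,a_5,a_6$'' is also false: those constants include contributions from $k_3,k_4$ (the short-interval PNT), from $\mathfrak{a}(\chi)$, and from $\mathfrak{c}(x,\chi)$. Finally, your refinements (per-modulus maxima summed against $\varphi(d)$, and re-optimising the averaging parameter and $T_0$) are more ambitious than the paper, which fixes $T_0=200$, keeps $h=\sqrt{x}\log x$, and uses the single global constant $\omega$ for all $3\le q\le 10^4$.
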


\begin{table}[]
    \centering
    \begin{tabular}{c|cccccc}
        $\log{x_0}$ & $\widetilde{a_1}$ & $\widetilde{a_2}$ & $\widetilde{a_3}$ & $\widetilde{a_4}$ & $\widetilde{a_5}$ & $\widetilde{a_6}$ \\
        \hline
        $20$ & $1.11315$ & $-10.80603$ & $-4.55029\cdot 10^{6}$ & $-9.74334$ & $-3.15402\cdot 10^{6}$ & $-11.18604$ \\
        $30$ & $1.07187$ & $-10.63523$ & $-1.16988\cdot 10^{9}$ & $-9.95921$ & $-8.10901\cdot 10^{8}$ & $-11.40191$ \\
        $40$ & $1.0528$ & $-10.57626$ & $-2.37009\cdot 10^{11}$ & $-10.08365$ & $-1.64282\cdot 10^{11}$ & $-11.52635$ \\
        $50$ & $1.04175$ & $-10.53537$ & $-4.47689\cdot 10^{13}$ & $-10.15137$ & $-3.10314\cdot 10^{13}$ & $-11.59407$ \\
        $60$ & $1.03453$ & $-10.52003$ & $-8.06659\cdot 10^{15}$ & $-10.20738$ & $-5.59133\cdot 10^{15}$ & $-11.65008$ \\
        $70$ & $1.02944$ & $-10.51273$ & $-1.4063\cdot 10^{18}$ & $-10.25074$ & $-9.74775\cdot 10^{17}$ & $-11.69344$ \\
        $80$ & $1.02566$ & $-10.50982$ & $-2.36519\cdot 10^{20}$ & $-10.28569$ & $-1.63943\cdot 10^{20}$ & $-11.72839$ \\
        $90$ & $1.02274$ & $-10.50942$ & $-3.99108\cdot 10^{22}$ & $-10.31466$ & $-2.7664\cdot 10^{22}$ & $-11.75736$ \\
        $100$ & $1.02042$ & $-10.51054$ & $-6.61986\cdot 10^{24}$ & $-10.33923$ & $-4.58854\cdot 10^{24}$ & $-11.78193$ \\
        $150$ & $1.01352$ & $-10.52454$ & $-7.2138\cdot 10^{35}$ & $-10.42345$ & $-5.00022\cdot 10^{35}$ & $-11.86615$ \\
        $200$ & $1.0101$ & $-10.54074$ & $-6.88948\cdot 10^{46}$ & $-10.47471$ & $-4.77543\cdot 10^{46}$ & $-11.91741$ \\
        $250$ & $1.00807$ & $-10.55546$ & $-6.36488\cdot 10^{57}$ & $-10.51048$ & $-4.4118\cdot 10^{57}$ & $-11.95318$ \\
        $500$ & $1.00402$ & $-10.60614$ & $-2.39569\cdot 10^{112}$ & $-10.60334$ & $-1.66057\cdot 10^{112}$ & $-12.04604$
    \end{tabular}
    \caption{Admissible constants for $\widetilde{a_i}$ at several choices of $x_0$ in Corollary \ref{cor:pntpaps_small_moduli}.}
    \label{tab:pntpaps_small_moduli}
\end{table}

The author's motivation for producing these results was to establish explicit versions of the prime number theorem for primes in arithmetic progressions at larger values of $x$, to explore how large $x$ needs to be before the (following) expected bound becomes true:
\begin{equation*}
    \left|\pi(x;q,a) - \frac{\Li(x)}{\varphi(q)} \right|
    < \left(\frac{\log{x}}{8\pi} + \frac{\log{q}}{2\pi}\right) \sqrt{x}.
\end{equation*}
Indeed, Greni\'{e} and Molteni already give a good explicit version of \eqref{eqn:pi_xqa_result} that holds for $x\geq 2$ as a special case of their results in \cite{Grenie2019}; see \eqref{eqn:GM_CDT_sc} in Appendix \ref{sec:cdt}. Recently, Ernvall-Hyt\"{o}nen and Paloj\"{a}rvi also proved another explicit version of the prime number theorem for primes in arithmetic progressions in \cite{ErnvallHytonenPalojarvi}, although \eqref{eqn:GM_CDT_sc} appears to be sharper. Moreover, the leading terms in \eqref{eqn:pi_xqa_result} are marginally better than the leading terms in \eqref{eqn:GM_CDT_sc} and our major improvements are to the secondary terms.

Now, Corollaries \ref{cor:pntpaps} and \ref{cor:pntpaps_small_moduli} are consequences of the explicit bounds for $\psi(x,\chi)$ that are presented in Theorems \ref{thm:psi_chi_expl_fmla} and \ref{thm:psi_chi_expl_fmla_small_moduli} respectively. We will prove these in Section \ref{sec:twisted_psi}, but offer a brief overview of the approach here first, because our approach contains novelty. The main challenge is to prove the result for \textit{primitive} non-principal characters, because it is a straightforward task to extend that observation to \textit{any} non-principal character, which we do in the final steps. Now, to prove the result for these \textit{primitive} characters, we note that $\psi(x,\chi)$ differs from
\begin{equation*}
    \Delta(x,\chi) = \frac{\psi_1(x+\sqrt{x}\log{x},\chi) - \psi_1(x,\chi)}{\sqrt{x}\log{x}}
\end{equation*}  
by a small error of size $O(\sqrt{x}\log{x})$; this is a consequence of the prime number theorem for short intervals, which requires $\log{x}\geq 10$. To this end, we will prove the latest explicit prime number theorem for short intervals in Section \ref{sec:PNT4SI}; see Theorem \ref{thm:CH_D_refined}. This result refines recent work by Cully-Hugill and Dudek \cite{CullyHugillDudek} and extends their computations, by splitting a particular sum over the non-trivial zeros of the Riemann zeta-function $\zeta(s)$ into more sub-intervals. All that remains is to bound $\Delta(x,\chi)$ using an explicit formula for $\psi_1(x,\chi)$; this will establish the relationship  
\begin{equation}\label{eqn:loose_expl_form}
     \Delta(x,\chi) \sqrt{x}\log{x} \approx - \sum_{\varrho_{\chi}} \frac{(x+h)^{\varrho_{\chi}+1} - x^{\varrho_{\chi}+1}}{\varrho_{\chi} (\varrho_{\chi}+1)},
\end{equation}
in which $\varrho_{\chi} = 1/2 + i\gamma_{\chi}$ are the non-trivial zeros of $L(s,\chi)$ and the error will be completely described. Finally, we apply bounds for the absolute value of this sum over zeros, which we obtain in Section \ref{sec:important_soz}. The novel ideas in this approach are that we bound the difference between $\psi(x,\chi)$ and $\Delta(x,\chi)$ using the prime number theorem for short intervals and the sub-intervals that we split the sums over zeros that arise into.

\section{Tools and results for sums over zeros}\label{sec:important_soz}

Suppose $\rho = \beta + i\gamma$ are the non-trivial zeros of the Riemann zeta-function $\zeta(s)$, $N(T)$ is the number of $\rho$ such that $0<\gamma\leq T$, and $\phi: [T_0, \infty) \to [0,\infty)$ is a monotone, non-increasing function on $[T_0, \infty)$ for some $T_0 \geq 1$. To ensure $\phi(t)$ is convex on $[T_0,\infty)$, suppose further that $\phi$ is twice continuously differentiable, $\phi'(t) \leq 0$, and $\phi''(t) \geq 0$. Sums of the following form appear often in number theory (and multiple times in the present paper):
\begin{equation*}
    \sum_{\gamma \in [T_1, T_2]}' \phi(\gamma) .
\end{equation*}
Here, the prime symbol indicates that if $\gamma \in \{T_1, T_2\}$, then $\phi(\gamma)$ is weighted by $1/2$. Sums of this type can be estimated using a result \cite[Lem.~3]{BrentAccurate} from Brent, Platt, and Trudgian, which refines a technique used to prove \cite[Lem.~1]{Lehman}.


\begin{lemma}[Brent, Platt, Trudgian]\label{lem:BPT}
Let $A_0 = 2.067$, $A_1 = 0.059$, and $A_2 = 1/150$. If $2\pi \leq U \leq V$, then
\begin{equation*}
    \sum_{\substack{U \leq \gamma \leq V}}' \phi(\gamma)
    = \frac{1}{2\pi} \int_{U}^{V} \phi(t)\log{\frac{t}{2\pi}}\,dt + \phi(V)Q(V) - \phi(U)Q(U) + E_2(U,V),
\end{equation*}
in which $|E_2(U,V)| \leq 2(A_0 + A_1\log{U})|\phi'(U)| + (A_1 + A_2) \frac{\phi(U)}{U}$ and \cite[Cor.~1]{BPT_mean_square}, \cite[Cor.~1.2]{HasanalizadeShenWong} tell us
\begin{align}
    |Q(T)| 
    &= \left|N(T) - \frac{T}{2\pi} \log{\frac{T}{2\pi e}} - \frac{7}{8}\right| \nonumber\\
    &\leq R(T) := \min\{0.28\log{T}, 0.1038\log{T} + 0.2573\log\log{T} + 9.3675\} . \label{eqn:me_again}
\end{align}
\end{lemma}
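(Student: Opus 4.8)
This statement is \cite[Lem.~3]{BrentAccurate}, a refinement of the technique behind \cite[Lem.~1]{Lehman}; in the paper one cites it, but the argument I would give runs as follows. The plan is a two-stage partial summation. First I would write the weighted sum as a Riemann--Stieltjes integral $\sum_{U\le\gamma\le V}'\phi(\gamma) = \int_U^V\phi(t)\,dN(t)$ (the half-weighting at the endpoints is either vacuous, since generically no $\gamma$ equals $U$ or $V$, or is accounted for by assigning half-mass to endpoint jumps), and substitute the Riemann--von Mangoldt formula $N(t) = M(t) + \tfrac78 + Q(t)$, where $M(t) = \tfrac{t}{2\pi}\log\tfrac{t}{2\pi e}$ satisfies $M'(t) = \tfrac1{2\pi}\log\tfrac{t}{2\pi}$. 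Integrating by parts, the smooth part $M(t)+\tfrac78$ produces exactly $\tfrac1{2\pi}\int_U^V\phi(t)\log\tfrac t{2\pi}\,dt$ together with boundary terms which, combined with the boundary term $[\phi(t)N(t)]_U^V$, collapse to $\phi(V)Q(V)-\phi(U)Q(U)$. What remains is
\[
E_2(U,V) = -\int_U^V Q(t)\,\phi'(t)\,dt ,
\]
so everything reduces to bounding this last integral, and the refinement over a crude $|Q(t)|\le R(t)$ estimate is essential here.

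For that I would decompose $Q(t) = S(t) + r(t)$, where $S(t) = \tfrac1\pi\arg\zeta(\tfrac12+it)$ and $r(t)$ is the correction from replacing $\arg\Gamma(\tfrac14+\tfrac{it}2)$ by its Stirling expansion, which satisfies an explicit bound $|r(t)| \le A_2/t$ with $A_2 = 1/150$ (essentially $1/(48\pi)$). The term $-\int r\phi'$ is then $\le A_2\int_U^V\tfrac{|\phi'(t)|}{t}\,dt \le A_2\,\phi(U)/U$ after one more integration by parts, using $\phi\ge0$ and $\phi'\le0$. For the main piece $-\int S\phi'$ I would integrate by parts against the antiderivative $\Sigma(t) = \int_0^t S(u)\,du$, which is where the genuine analytic input enters: an explicit Littlewood-type bound $|\Sigma(t)| \le g(t) := A_0 + A_1\log t$ for $t\ge 2\pi$, with $A_0 = 2.067$ and $A_1 = 0.059$. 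This gives $-\int_U^V S\phi' = -\Sigma(V)\phi'(V) + \Sigma(U)\phi'(U) + \int_U^V\Sigma(t)\phi''(t)\,dt$; the crucial point is to keep the exact identity $\int_U^V g(t)\phi''(t)\,dt = g(U)|\phi'(U)| - g(V)|\phi'(V)| + A_1\int_U^V\tfrac{|\phi'(t)|}{t}\,dt$, so that after bounding $|\Sigma|\le g$ the two $V$-endpoint contributions cancel rather than being discarded (discarding them would leave a spurious $\log V$). What survives is $2g(U)|\phi'(U)| + A_1\int_U^V\tfrac{|\phi'(t)|}{t}\,dt$, and the last integral is again $\le\phi(U)/U$ by integration by parts. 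Collecting terms yields exactly $|E_2(U,V)|\le 2(A_0+A_1\log U)|\phi'(U)| + (A_1+A_2)\phi(U)/U$. The convexity hypotheses $\phi'\le0$ and $\phi''\ge0$ are used precisely to control the signs in these integrations by parts (and to make $|\phi'|$ non-increasing).

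The main obstacle is the explicit bound on $\int_0^t S(u)\,du$: this is the one nontrivial ingredient, and the sharpness of $A_0$ and $A_1$ --- hence of the whole lemma in applications --- rests on it, so I would either invoke a good explicit $S_1$-bound from the literature or derive one from an explicit Riemann--von Mangoldt-type formula. A second, more clerical, difficulty is to carry out the two rounds of integration by parts without slack, in particular arranging the cancellation of the $V$-endpoint terms described above, and controlling the Stirling remainder of $\arg\Gamma(\tfrac14+\tfrac{it}2)$ tightly enough to justify $A_2 = 1/150$ on the range $t\ge 2\pi$.
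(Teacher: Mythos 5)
Your proposal is correct and matches the approach the paper attributes to \cite[Lem.~3]{BrentAccurate}: the paper does not reprove this lemma but only sketches it (Stieltjes integral, two integrations by parts, explicit bounds on $Q$, $S$, and $S_1$), and your two-stage partial summation, the decomposition $Q=S+r$ with the Stirling remainder bound $|r(t)|\le A_2/t$, and the careful tracking of the $V$-endpoint cancellation after invoking $|S_1(t)|\le A_0+A_1\log t$ reconstruct that argument faithfully. The one explicit ingredient in the paper's sketch that you do not invoke directly is a bound on $S(T)$ itself, but that is needed only to establish the separately cited estimate $|Q(T)|\le R(T)$ (via $Q=S+r$), not in your derivation of $E_2(U,V)$.
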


To prove Lemma \ref{lem:BPT}, the authors rewrite the sum over zeros as a Stieltjes integral, apply integration by parts twice, and apply explicit versions of
\begin{align*}
    Q(T) &\ll \log{T},\\
    S(T) &= \pi^{-1}\arg{\zeta(\tfrac{1}{2} + iT)} \ll \log{T},
    \quad\text{and}\quad
    S_1(T) = \int_0^T S(t)\,dt \ll \log{T}.
\end{align*}
To prove a result of the shape of \cite[Lem.~1]{Lehman}, one should only apply integration by parts once, then apply an explicit version of $Q(T) \ll \log{T}$; the other ingredients are \textit{not} required in this less developed technique. Both of these results will generalise naturally into other settings, as long we have the prerequisite ingredients that we have mentioned. In some settings, we might not have access to good off-the-shelf bounds for the appropriate analogues of $S(T)$ and $S_1(T)$, although decent bounds for the appropriate analogue of $Q(T)$ are more common. 

In Section \ref{sec:PNT4SI}, we will use Lemma \ref{lem:BPT} to refine of a special case of \cite[Thm.~1]{CullyHugillDudek}, which will be an important ingredient in our proof of Theorem \ref{thm:exact_formula}. We will also need to estimate sums over the non-trivial zeros $\varrho_{\chi} = 1/2 + i\gamma_{\chi}$ of Dirichlet $L$-functions $L(s,\chi)$ in our eventual proof of Theorem \ref{thm:exact_formula}, where $\chi$ is a primitive and non-principal character modulo $q\geq 3$. To this end, we import the following lemma, which is \cite[Cor.~1.2]{Bennett}, and an explicit estimate for the number $N(T,\chi)$ of non-trivial zeros of $L(s,\chi)$ such that $|\gamma_{\chi}|\leq T$.


\begin{lemma}\label{lem:NZeros2}
If $\chi$ is a character with conductor $q \geq 2$ and $T \geq 5/7$, then
\begin{equation*}
    \left|N(T,\chi)-\frac{T}{\pi}\log{\frac{qT}{2\pi e}}\right| 
    \leq 0.247\log{\frac{qT}{2\pi}} + 6.894.
\end{equation*}
\end{lemma}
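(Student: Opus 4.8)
The plan is to recover the stated bound by the classical argument-principle method of Riemann and von Mangoldt, rendered fully explicit. Since the conductor of $\chi$ equals $q$, the character is primitive modulo $q$ (and, as $q\geq 2$, non-principal), so the completed $L$-function
\[
  \xi(s,\chi)=\paranthesis{\frac{q}{\pi}}^{(s+\mathfrak{a})/2}\Gamma\paranthesis{\frac{s+\mathfrak{a}}{2}}L(s,\chi),
  \qquad \mathfrak{a}\in\{0,1\},\ \ \chi(-1)=(-1)^{\mathfrak{a}},
\]
is entire, is zero-free outside the critical strip, and satisfies both the functional equation $\xi(s,\chi)=\varepsilon(\chi)\,\xi(1-s,\bar\chi)$ with $\abs{\varepsilon(\chi)}=1$ and the reflection $\overline{\xi(\bar s,\chi)}=\xi(s,\bar\chi)$. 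First I would express $N(T,\chi)$, which counts the zeros of $\xi(\cdot,\chi)$ inside the rectangle $\mathcal{R}$ with vertices $-1\pm iT$ and $2\pm iT$, as $\tfrac{1}{2\pi}$ times the net change in $\arg\xi(s,\chi)$ as $s$ traverses $\partial\mathcal{R}$, via the argument principle (adjusting in the usual way if $T$ is the ordinate of a zero).

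Standard manipulations with the two displayed symmetries fold $\partial\mathcal{R}$ onto a single staircase path running from $2-iT$ up to $2+iT$ and then left to $\tfrac{1}{2}+iT$ — a clean way to organise this is to work with $N(T,\chi)+N(T,\bar\chi)$ and invoke $N(T,\chi)=N(T,\bar\chi)$ — and this isolates three genuinely distinct contributions. The \emph{main term} is the change in $\arg\bigl[(q/\pi)^{(s+\mathfrak{a})/2}\Gamma((s+\mathfrak{a})/2)\bigr]$ along the relevant segments; evaluating it with an explicit Stirling expansion yields $\tfrac{T}{\pi}\log\tfrac{qT}{2\pi e}$ plus a term that is $O(1/T)$ for large $T$ and in any case bounded for $T\geq \tfrac{5}{7}$. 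The contribution of $L(s,\chi)$ along $\Re s=2$ is controlled directly from its Dirichlet series, since $\abs{\log L(2+it,\chi)}\leq\log\zeta(2)$, and is again a harmless $O(1)$. What remains is $\pi S(T,\chi):=\arg L(\tfrac{1}{2}+iT,\chi)$, together with its mirror image at height $-T$, contributed by the horizontal segments joining $\Re s=2$ to $\Re s=\tfrac{1}{2}$.

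The main obstacle is bounding $\abs{S(T,\chi)}$ with the sharp constant $0.247$ in front of $\log\tfrac{qT}{2\pi}$. I would use Backlund's device: along a horizontal segment the total variation of $\arg L(\,\cdot\,,\chi)$ is at most $(n+1)\pi$, where $n$ is the number of sign changes of $\sigma\mapsto\Re L(\sigma+iT,\chi)$ on the segment. Since $\Re L(\sigma+iT,\chi)=g(\sigma)$ for the entire function $g(s):=\tfrac{1}{2}\bigl(L(s+iT,\chi)+L(s-iT,\bar\chi)\bigr)$, those sign changes lie among the real zeros of $g$, which Jensen's formula bounds by $\tfrac{1}{\log(R/r)}\log\tfrac{\max_{\abs{s-2}=R}\abs{g(s)}}{\abs{g(2)}}$ for suitable radii $\tfrac{3}{2}\leq r<R$ about the centre $s=2$. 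Here $\abs{g(2)}$ is bounded below by a constant close to $1$, while $\max_{\abs{s-2}=R}\abs{g(s)}$ is bounded above by an explicit convexity (Phragm\'{e}n--Lindel\"{o}f) estimate for $\abs{L(s,\chi)}$ on that disc, fed by the trivial bound on a line $\Re s=1+\delta$ and the functional-equation bound on $\Re s=-\delta$; it is precisely here that the factor $q(\abs{T}+O(1))$ inside the logarithm is born. Optimising $\delta$ and the radii $r,R$, and then carefully collecting every remaining $O(1)$ contribution — the Stirling error, the $\Re s=2$ term, the pieces near the real axis, and a short direct verification for the small values $\tfrac{5}{7}\leq T\leq T_1$ where the asymptotics are weakest — is what yields the explicit constants $0.247$ and $6.894$. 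I expect this constant-tracking, rather than any single conceptual step, to be the genuine work.
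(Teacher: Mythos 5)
The paper does not prove this lemma; it imports it verbatim as Corollary~1.2 of Bennett, Martin, O'Bryant, and Rechnitzer (cited as \texttt{Bennett} in the bibliography). Your Riemann--von Mangoldt sketch --- argument principle on the completed $L$-function, Stirling for the main term, Backlund's device with Jensen's formula to control $S(T,\chi)$, and a separate check for small $T$ --- is a faithful reconstruction of the strategy carried out in that reference, and you rightly flag the explicit constant tracking as the genuine work rather than any single conceptual step.
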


Using Lemma \ref{lem:NZeros2}, we obtain the following generalisation of \cite[Lem.~1]{Lehman}.

\begin{lemma}\label{lem:Lehman_gen}
Let $\gamma_{\chi}$ denote the ordinates of the non-trivial (and non-exceptional) zeros of $L(s,\chi)$ and $\phi: [T_0, \infty) \to [0,\infty)$ be a monotone, non-increasing function on $[T_0, \infty)$ for some $T_0 \geq 5/7$ that is continuously differentiable such that $\phi'(t) \leq 0$. If $5/7 \leq U \leq V$, then
\begin{equation}\label{eqn:Lehman_gen}
    \sum_{\substack{U \leq |\gamma_{\chi}| \leq V}} \phi(\gamma_{\chi})
    = \frac{\log{q}}{\pi} \int_U^V \phi(t)\,dt + \frac{1}{\pi} \int_U^V \phi(t) \log{\frac{t}{2\pi}} \,dt + E_3(U,V),
\end{equation}
in which
\begin{align*}
    |E_3(U,V)| \leq 2 \phi(U) \left(0.247\log{\frac{qU}{2\pi}} + 6.894\right) + 0.247 \int_U^V \frac{\phi(t)}{t}\,dt .
\end{align*}
\end{lemma}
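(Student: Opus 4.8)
The plan is to mimic the proof of Lemma \ref{lem:BPT} but with integration by parts performed only once, using Lemma \ref{lem:NZeros2} in place of the bound for $Q(T)$. First I would set $N^*(T,\chi) = \frac{T}{\pi}\log\frac{qT}{2\pi e}$ and $R^*(T,\chi) = 0.247\log\frac{qT}{2\pi}+6.894$, so that Lemma \ref{lem:NZeros2} reads $|N(T,\chi)-N^*(T,\chi)|\leq R^*(T,\chi)$. Since $\phi$ is only supported on $[T_0,\infty)$ and we sum over $U\le|\gamma_\chi|\le V$, I would write the sum as a Riemann--Stieltjes integral $\int_{U^-}^{V^+}\phi(|t|)\,dN(t,\chi)$ counting zeros with ordinates in $[-V,-U]\cup[U,V]$; equivalently, using the symmetry of the zero set for $L(s,\bar\chi)$ versus $L(s,\chi)$ (or simply counting positive and negative ordinates separately via $N(T,\chi)$), the sum equals $\int_U^V \phi(t)\,d\big(N(t,\chi)+N(t,\bar\chi)\big)$ up to endpoint conventions. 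The cleanest route is to observe that the counting function for $\{|\gamma_\chi|\le T\}$ is exactly what Lemma \ref{lem:NZeros2} controls, so $\sum_{U\le|\gamma_\chi|\le V}\phi(\gamma_\chi) = \int_{U}^{V}\phi(t)\,dN_{|\cdot|}(t)$ where $N_{|\cdot|}$ satisfies $|N_{|\cdot|}(T)-\frac{T}{\pi}\log\frac{qT}{2\pi e}|\le R^*(T,\chi)$.

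Next I would integrate by parts: $\int_U^V \phi(t)\,dN_{|\cdot|}(t) = \phi(V)N_{|\cdot|}(V) - \phi(U)N_{|\cdot|}(U) - \int_U^V N_{|\cdot|}(t)\phi'(t)\,dt$. Now substitute $N_{|\cdot|}(t) = N^*(t) + \big(N_{|\cdot|}(t)-N^*(t)\big)$ everywhere. The main term $\int_U^V N^*(t)\,(-\phi'(t))\,dt + \phi(V)N^*(V)-\phi(U)N^*(U)$ integrates back by parts to $\int_U^V \phi(t)\,d N^*(t) = \int_U^V \phi(t)\cdot\frac{1}{\pi}\log\frac{qt}{2\pi}\,dt = \frac{\log q}{\pi}\int_U^V\phi(t)\,dt + \frac{1}{\pi}\int_U^V\phi(t)\log\frac{t}{2\pi}\,dt$, since $\frac{d}{dt}N^*(t) = \frac{1}{\pi}\log\frac{qt}{2\pi}$. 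That is exactly the claimed main term in \eqref{eqn:Lehman_gen}. The error term is $E_3(U,V) = \phi(V)\big(N_{|\cdot|}(V)-N^*(V)\big) - \phi(U)\big(N_{|\cdot|}(U)-N^*(U)\big) - \int_U^V\big(N_{|\cdot|}(t)-N^*(t)\big)\phi'(t)\,dt$. Bounding crudely with Lemma \ref{lem:NZeros2}, $|E_3(U,V)| \le \phi(V)R^*(V,\chi) + \phi(U)R^*(U,\chi) + \int_U^V R^*(t,\chi)\,(-\phi'(t))\,dt$. Since $\phi(V)\le\phi(U)$ and $R^*$ is increasing, I would like to absorb the $\phi(V)R^*(V,\chi)$ boundary term. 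Integrating $\int_U^V R^*(t,\chi)(-\phi'(t))\,dt$ by parts gives $\phi(U)R^*(U,\chi) - \phi(V)R^*(V,\chi) + \int_U^V\phi(t)\,\frac{d}{dt}R^*(t,\chi)\,dt$, and $\frac{d}{dt}R^*(t,\chi) = \frac{0.247}{t}$. Adding the two contributions, the $\pm\phi(V)R^*(V,\chi)$ cancel and one is left with $|E_3(U,V)|\le 2\phi(U)R^*(U,\chi) + 0.247\int_U^V\frac{\phi(t)}{t}\,dt$, which is precisely the stated bound.

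The step I expect to require the most care is the bookkeeping around counting $|\gamma_\chi|$ rather than $\gamma_\chi$, i.e. handling the positive and negative ordinates correctly and the endpoint weights, since Lemma \ref{lem:NZeros2} is stated for $N(T,\chi)$ counting $|\gamma_\chi|\le T$ directly — so in fact this is already the two-sided count and no doubling is needed, which I should state explicitly to avoid an erroneous factor of $2$ in the main term. A secondary subtlety is that Lemma \ref{lem:NZeros2} requires $T\ge 5/7$, which is exactly why the hypothesis $U\ge 5/7$ appears; I should note the Stieltjes integration by parts is valid because $N_{|\cdot|}$ is of bounded variation and $\phi$ is $C^1$, and that the (finitely many, possibly zero) exceptional real zeros are excluded by hypothesis so do not enter the count. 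Everything else is a routine integration-by-parts computation using $\frac{d}{dt}\big(\frac{t}{\pi}\log\frac{qt}{2\pi e}\big) = \frac{1}{\pi}\log\frac{qt}{2\pi}$.
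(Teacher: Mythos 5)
Your proposal is correct and follows essentially the same route as the paper: rewrite the sum as a Stieltjes integral against $N(t,\chi)$, split $N$ into the smooth approximant $N^*(t)=\frac{t}{\pi}\log\frac{qt}{2\pi e}$ plus remainder $Q(t,\chi)$, integrate by parts once to produce the boundary terms and the $\int\phi' Q$ integral, and then integrate by parts a second time on the remainder (using $\frac{d}{dt}R^*=0.247/t$) to cancel the $\phi(V)R^*(V)$ boundary contribution and arrive at $2\phi(U)R^*(U,\chi)+0.247\int_U^V\phi(t)/t\,dt$. Your explicit observations that $N(T,\chi)$ in Lemma~\ref{lem:NZeros2} is already the two-sided count $\#\{|\gamma_\chi|\le T\}$ (so no factor of $2$ enters) and that $U\ge5/7$ is needed so Lemma~\ref{lem:NZeros2} applies throughout $[U,V]$ are exactly the bookkeeping points the paper glosses over.
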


\begin{proof}
Without loss of generality, we can suppose $|\gamma_{\chi}|\not\in\{U,V\}$, since the result will extend naturally to these fringe cases by taking limits. Lemma \ref{lem:NZeros2} implies
\begin{align*}
    \sum_{\substack{U \leq |\gamma_{\chi}| \leq V}} \phi(\gamma_{\chi})
    &= \int_U^V \phi(t)\,d N(t,\chi) \\
    &= \frac{\log{q}}{\pi} \int_U^V \phi(t)\,dt + \frac{1}{\pi} \int_U^V \phi(t) \log\left(\frac{t}{2\pi}\right) \,dt + \int_U^V \phi(t)\,dQ(t,\chi),
\end{align*}
in which $|Q(t,\chi)| \leq 0.247\log{\frac{qT}{2\pi}} + 6.894$. Therefore, use integration by parts twice to see
\begin{align*}
    |E_3(U,V)| 
    &= \left|\phi(V) Q(V,\chi) - \phi(U) Q(U,\chi) - \int_U^V \phi'(t) Q(t,\chi)\,dt\right| \\
    &\leq \sum_{m\in\{U,V\}} \phi(m) \left(0.247\log{\frac{q m}{2\pi}} + 6.894\right) - \int_U^V \phi'(t) \left(0.247\log{\frac{qt}{2\pi}} + 6.894\right)\,dt \\
    &= 2 \phi(U) \left(0.247\log{\frac{qU}{2\pi}} + 6.894\right) + 0.247 \int_U^V \frac{\phi(t)}{t}\,dt . \qedhere
\end{align*}
\end{proof}

Using Lemma \ref{lem:Lehman_gen}, we can prove the following result, which will be important later.

\begin{theorem}\label{thm:soz_result}
If the GRH is true, $h=\sqrt{x}\log{x}$, and $x\geq x_0$ then there exist constants $k_1(x_0)$ and $k_2(x_0)$ such that
\begin{align*}
    \left|\sum_{\varrho_{\chi}} \frac{(x+h)^{\varrho_{\chi}+ 1} - x^{\varrho_{\chi}+ 1}}{h \varrho_{\chi}(\varrho_{\chi}+1)} \right| 
    \leq \left(\frac{\log{x}}{8\pi} + \frac{\log{q}}{2\pi} + k_1(x_0)\right)\sqrt{x}\log{x} + k_2(x_0)\sqrt{x}\log{q}.
\end{align*}
We present a selection of admissible $k_i(x_0)$ for several values of $x_0\geq e^{10}$ in Table \ref{tab:soz_results}.
\end{theorem}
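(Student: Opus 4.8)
The plan is to estimate the sum over $\varrho_\chi = 1/2 + i\gamma_\chi$ by separating the zeros into those with small ordinate and those with large ordinate, and to handle each range with a different tool. Write $S := \sum_{\varrho_\chi} \frac{(x+h)^{\varrho_\chi+1} - x^{\varrho_\chi+1}}{h\,\varrho_\chi(\varrho_\chi+1)}$. Under GRH, $|\varrho_\chi| = |1/2 + i\gamma_\chi| \asymp |\gamma_\chi|$, and the elementary bound
\begin{equation*}
    \left|\frac{(x+h)^{\varrho_\chi+1} - x^{\varrho_\chi+1}}{h}\right|
    = \left|\frac{1}{h}\int_x^{x+h} (\varrho_\chi+1)\, t^{\varrho_\chi}\,dt\right|
    \leq |\varrho_\chi + 1| \,(x+h)^{1/2}
\end{equation*}
combined with the sharper bound $\left|(x+h)^{\varrho_\chi+1} - x^{\varrho_\chi+1}\right| \leq 2(x+h)^{3/2}$ gives, for each zero, a term bounded by
\begin{equation*}
    \min\left\{ \frac{(x+h)^{1/2}}{|\gamma_\chi|},\; \frac{2(x+h)^{3/2}}{h\,|\gamma_\chi|^2}\right\}.
\end{equation*}
So the whole problem reduces to estimating $\sqrt{x}\sum_{\varrho_\chi} \phi(\gamma_\chi)$ with $\phi(t)$ essentially $\min\{1/t, \text{const}/t^2\}$, which is exactly the monotone, non-increasing, eventually convex shape for which Lemma \ref{lem:Lehman_gen} (and a Lehman-type partial summation) is designed. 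The crossover happens around $|\gamma_\chi| \asymp \sqrt{x}\log x = h/\sqrt{x}$; I would split at $T = h/\sqrt{x} = \log x$ or some convenient nearby value, pairing zeros $\pm\gamma_\chi$ so that the sum runs over $|\gamma_\chi| \in [U,V]$ with $U = 5/7$ (or the smallest ordinate) and $V \to \infty$.

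The key steps, in order: (1) bound the per-zero contribution by the two estimates above and reduce to $\sqrt{x}$ times a sum of $\phi(\gamma_\chi)$; (2) on the low range $5/7 \leq |\gamma_\chi| \leq T$ apply Lemma \ref{lem:Lehman_gen} with $\phi(t) = 1/t$ (times the trivial prefactor), extracting the main terms $\frac{\log q}{\pi}\int \frac{dt}{t}$ and $\frac{1}{\pi}\int \frac{1}{t}\log\frac{t}{2\pi}\,dt$ — the latter integral is $\frac{1}{2\pi}(\log^2 T - \log^2 U) - \frac{\log(2\pi)}{\pi}(\log T - \log U)$, which is where the $\frac{\log^2 x}{2\pi}$, hence after dividing into the stated normalization the $\frac{\log x}{8\pi}\sqrt{x}\log x$ leading term, comes from (since $T \approx \log x$ would give $\log^2 T \approx (\log\log x)^2$, I actually want to push $V$ as large as useful — more carefully, taking the small-ordinate cutoff up to a height comparable with $\sqrt{x}$ is what produces $\log^2(\sqrt x) = \tfrac14 \log^2 x$; I would optimize the cutoff $T$ to balance the two pieces and extract precisely $\frac{\log x}{8\pi} + \frac{\log q}{2\pi}$); (3) on the high range $|\gamma_\chi| > T$ apply Lemma \ref{lem:Lehman_gen} again (or the cruder Lehman-type bound) with $\phi(t) \asymp (x+h)/(h t^2)$, where $\int_T^\infty \frac{dt}{t^2} = 1/T$ and $\int_T^\infty \frac{\log(t/2\pi)}{t^2}\,dt$ converges, so this range contributes only to the secondary constants $k_1, k_2$; (4) collect the error terms $E_3(U,V)$ from both applications, the boundary terms $\phi(U)(0.247\log\frac{qU}{2\pi}+6.894)$ and $\phi(T)(\cdots)$, bound $(x+h)^{1/2} = \sqrt{x}(1+\log x/\sqrt x)^{1/2} \leq \sqrt x(1 + \tfrac12\frac{\log x}{\sqrt x} + \cdots)$ explicitly using $x \geq x_0 \geq e^{10}$, and absorb everything not of the form $\frac{\log x}{8\pi}\sqrt x\log x + \frac{\log q}{2\pi}\sqrt x\log x$ into $k_1(x_0)\sqrt x\log x + k_2(x_0)\sqrt x\log q$; (5) finally, tabulate the resulting $k_1(x_0), k_2(x_0)$ by evaluating the (completely explicit) constants for each $x_0 = e^{10}, e^{20}, \dots$ in Table \ref{tab:soz_results}, noting that all $x_0$-dependence enters through quantities like $(1+\log x_0/\sqrt{x_0})$, $1/\log x_0$, and $\log\log x_0 / \log x_0$ that decrease in $x_0$.

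The main obstacle I expect is step (2)–(4): getting the leading constants to come out as \emph{exactly} $\frac{1}{8\pi}$ and $\frac{1}{2\pi}$ (rather than something slightly larger) requires choosing the split point $T$ and handling the transition between the two per-zero estimates carefully, and requires that all the slack — the difference between $\min\{1/t, c/t^2\}$ and the piecewise bound I actually integrate, the difference between $(x+h)^{1/2}$ and $\sqrt x$, the $\varrho_\chi+1$ versus $\gamma_\chi$ approximations, and the error terms $E_3$ — be pushed entirely into the $k_i(x_0)$ terms. Since $h/\sqrt x = \log x$ grows, the natural cutoff where the two bounds cross scales with $\log x$, not with a fixed height, so one has to verify that the main-term integral over $[\,\cdot\,, T]$ still produces $\frac{1}{8\pi}\log^2 x$ at the top order once $T$ is chosen as a function of $x$; I anticipate taking $T$ to be a fixed small multiple of $\sqrt{x}$ (so $\log T \sim \frac12\log x$) rather than $\log x$, which makes the high-range tail still summable because $\phi(t)\asymp (x+h)/(ht^2)$ and $\int_T^\infty t^{-2}\,dt = T^{-1} \asymp x^{-1/2}$ kills the extra $\sqrt x$. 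The bookkeeping to confirm the advertised constants and produce the table is the bulk of the work; everything else is an application of Lemma \ref{lem:Lehman_gen} together with crude but explicit majorization of the per-zero summands.
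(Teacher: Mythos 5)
Your proposal is essentially the paper's proof: split the sum at a height of order $\sqrt{x}$ (the paper takes $T=\sqrt{x}/\log{x}$, noting $h=x/T$), bound each term by $\min\{(x+h)^{1/2}/|\varrho_\chi|,\,2(x+h)^{3/2}/(h\gamma_\chi^2)\}$, apply Lemma \ref{lem:Lehman_gen} with $\phi(t)=(1/4+t^2)^{-1/2}$, $t^{-1}$, and $t^{-2}$ on the respective ranges, and push all lower-order pieces into $k_1,k_2$; the $\frac{\log x}{8\pi}$ and $\frac{\log q}{2\pi}$ coefficients come out of $\frac{1}{2\pi}(\log T)^2$ and $\frac{1}{\pi}\log q\log T$ with $\log T\sim\frac12\log x$, exactly as you eventually identify. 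One small caution: your first stated crossover $T=h/\sqrt{x}=\log x$ is wrong (equating your two per-zero bounds gives $|\gamma|\asymp x/h\asymp\sqrt{x}/\log x$, not $\log x$), and a cutoff of size $\log x$ would leave the tail $\sum_{|\gamma|>T}\gamma^{-2}$ far too large to absorb; you catch this and move to $T\asymp\sqrt{x}$, which is the right choice, and the paper additionally inserts a fixed intermediate cutoff at $\eta(x_0)$ (and treats $N(5/7,\chi)$ separately) purely to make the $x_0$-dependent bookkeeping clean.
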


\begin{table}[]
    \centering
    \begin{tabular}{c|cccc}
        $\log{x_0}$ & $k_1(x_0)$ & $\widetilde{k_1}(x_0)$ & $k_2(x_0)$ & $\widetilde{k_2}(x_0)$ \\
        \hline
        $10$ & $3.10557$ & - & $2.36179$ & - \\
        $\log(1.05\cdot 10^7)$ & $1.72106$ & $1.36974$ & $2.23552$ & $-1.03762$ \\
        $20$ & $1.39025$ & $1.10685$ & $2.2275$ & $-1.04564$ \\
        $30$ & $0.94152$ & $0.75267$ & $2.22574$ & $-1.0474$ \\
        $40$ & $0.71678$ & $0.57514$ & $2.22572$ & $-1.04742$ \\
        $50$ & $0.5812$ & $0.46789$ & $2.22572$ & $-1.04742$ \\
        $60$ & $0.49041$ & $0.39599$ & $2.22572$ & $-1.04742$ \\
        $70$ & $0.42533$ & $0.34439$ & $2.22572$ & $-1.04742$ \\
        $80$ & $0.37637$ & $0.30555$ & $2.22565$ & $-1.04742$ \\
        $90$ & $0.3383$ & $0.27523$ & $2.21596$ & $-1.04746$ \\
        $100$ & $0.31171$ & $0.25071$ & $1.68045$ & $-1.05312$ \\
        $150$ & $-0.39649$ & $-0.42406$ & $-6.09189$ & $-7.60473$ \\
        $200$ & $-1.72239$ & $-1.74308$ & $-13.9581$ & $-15.47094$ \\
        $250$ & $-3.30168$ & $-3.31822$ & $-21.84477$ & $-23.35762$ \\
        $500$ & $-12.37619$ & $-12.38446$ & $-61.41291$ & $-62.92575$
    \end{tabular}
    \caption{Admissible values for $k_i(x_0)$ and $\widetilde{k_i}(x_0)$ at several choices of $x_0$ in Theorems \ref{thm:soz_result}-\ref{thm:soz_result_small_moduli}.}
    \label{tab:soz_results}
\end{table}

\begin{proof}
To estimate the objective sum over $\varrho_{\chi}$, observe that
\begin{equation*}
    \left|\sum_{|\gamma_{\chi}|\geq T} \frac{(x+h)^{\varrho_{\chi} +1} - x^{\varrho_{\chi} +1}}{\varrho_{\chi}(\varrho_{\chi}+1) h}\right| \leq \bigg(\left(1+\frac{h}{x}\right)^{\frac{3}{2}} + 1\bigg) \frac{x^{\frac{3}{2}}}{h} \sum_{|\gamma_{\chi}| \geq T} \frac{1}{\gamma_{\chi}^2} 
\end{equation*}
and
\begin{align*}
    \left|\sum_{|\gamma_{\chi}| < T} \frac{(x+h)^{\varrho_{\chi}+ 1} - x^{\varrho_{\chi}+ 1}}{h \varrho_{\chi}(\varrho_{\chi}+1)}\right|
    = \left|\sum_{|\gamma_{\chi}| < T} \frac{1}{h \varrho_{\chi}} \int_{x}^{x+h} t^{\varrho_{\chi}}\,dt\right|
    &\leq \sqrt{x} \left(1+\frac{h}{x}\right)^{\frac{1}{2}} \sum_{|\gamma_{\chi}| < T} \frac{1}{\sqrt{\tfrac{1}{4} + \gamma_{\chi}^2}} .
\end{align*}
We will insert the choices $h = \sqrt{x}\log{x}$ and $T = \sqrt{x}/\log{x} := \eta(x) \geq \eta(x_0) > 5/7$ (so that $h=x/T$) into \eqref{eqn:GMStep3}, which is a combination of the preceding observations:
\begin{equation}\label{eqn:GMStep3}
\begin{split}
    &\left|\sum_{\varrho_{\chi}} \frac{(x+h)^{\varrho_{\chi}+ 1} - x^{\varrho_{\chi}+ 1}}{h \varrho_{\chi}(\varrho_{\chi}+1)} \right| 
    \leq \bigg(\left(1+\frac{h}{x}\right)^{\frac{3}{2}} + 1\bigg) \frac{x^{\frac{3}{2}}}{h} \sum_{|\gamma_{\chi}| \geq T} \frac{1}{\gamma_{\chi}^2} \\ 
    &\qquad\qquad\quad+ \sqrt{x} \left(1+\frac{h}{x}\right)^{\frac{1}{2}} \left(2 N(\tfrac{5}{7},\chi) + \sum_{\frac{5}{7} \leq |\gamma_{\chi}| < \eta(x_0)} \frac{1}{\sqrt{\tfrac{1}{4} + \gamma_{\chi}^2}} + \sum_{\eta(x_0) \leq |\gamma_{\chi}| < T} \frac{1}{|\gamma_{\chi}|}\right) .
\end{split}
\end{equation}
First, apply Lemma \ref{lem:NZeros2} to see 
\begin{align}
    2 N(\tfrac{5}{7},\chi)
    &\leq \frac{10}{7\pi}\log{\frac{5q}{14\pi e}} 
    + 0.494\log{\frac{5q}{14\pi}} + 13.788
    < 0.94873 \log{q} + 11.27041 \label{eqn:soz_useful_zd}
\end{align}
Next, apply Lemma \ref{lem:Lehman_gen} with $\phi(t) = (1/4 + t^2)^{-1}$ to see
\begin{align}
    \sum_{\frac{5}{7} \leq |\gamma_{\chi}| < \eta(x_0)} \frac{1}{\sqrt{\tfrac{1}{4} + \gamma_{\chi}^2}}  
    &\leq \left(\frac{0.494}{\sqrt{\tfrac{1}{4} + \frac{25}{49}}} + \int_{\frac{5}{7}}^{\eta(x_0)} \frac{\pi^{-1}\,dt}{\sqrt{\tfrac{1}{4} + t^2}} \right) \log{q} 
    + \int_{\frac{5}{7}}^{\eta(x_0)} \frac{\pi^{-1} \log\frac{t}{2\pi}}{\sqrt{\frac{1}{4} + t^2}} \,dt \nonumber\\
    &\qquad\qquad + \frac{2\left(0.247\log{\frac{5}{14\pi}} + 6.894\right)}{\sqrt{\tfrac{1}{4} + \frac{25}{49}}}  + 0.247 \int_{\frac{5}{7}}^{\eta(x_0)} \frac{t^{-1}\,dt}{\sqrt{\tfrac{1}{4} + t^2}} \nonumber\\
    &:= \nu_1(x_0) \log{q} + \nu_2(x_0) \label{eqn:soz_useful0}
\end{align}
Similarly, apply Lemma \ref{lem:Lehman_gen} with $\phi(t) = t^{-1}$ to see
\begin{align}
    &\sum_{\eta(x_0) \leq |\gamma_{\chi}| < T} \frac{1}{|\gamma_{\chi}|} 
    \leq \left(\frac{\log{T}}{\pi} + \frac{0.494}{\eta(x_0)} - \frac{\log{\eta(x_0)}}{\pi} \right) \log{q} 
    + \frac{(\log{\frac{T}{2\pi}})^2 - (\log{\frac{\eta(x_0)}{2\pi}})^2}{2\pi} \nonumber\\
    &\qquad\qquad\qquad\qquad\qquad\qquad\qquad + \frac{2\left(6.894 - 0.247\log{2\pi\eta(x_0)}\right)}{\eta(x_0)} + \frac{0.247}{\eta(x_0)} - \frac{0.247}{T} \nonumber\\
    &\qquad := \left(\frac{\log{T}}{\pi} + \nu_3(x_0)\right)\log{q} + \frac{(\log{T})^2}{2\pi} - 0.585 \log{T} + \nu_4(x_0) - \frac{0.247}{T} .\label{eqn:soz_useful1} 
\end{align}
Finally, apply Lemma \ref{lem:Lehman_gen} with $\phi(t) = t^{-2}$ to see
\begin{equation}\label{eqn:soz_useful2}
    \sum_{|\gamma_{\chi}|\geq T} \frac{T}{\gamma_{\chi}^2}
    \leq \left(\frac{1}{\pi} + \frac{0.494}{T}\right)\log{q} + \left(\frac{1}{\pi} + \frac{0.494}{T}\right)\log{T} - 0.2667 + \frac{13.0034}{T} . 
\end{equation}
So, we can apply \eqref{eqn:soz_useful_zd}, \eqref{eqn:soz_useful0}, \eqref{eqn:soz_useful1}, and \eqref{eqn:soz_useful2} in \eqref{eqn:GMStep3} to see that if $x_0 \geq e^{10}$, then
\begin{align}
    &\left|\sum_{\varrho_{\chi}} \frac{(x+h)^{\varrho_{\chi}+ 1} - x^{\varrho_{\chi}+ 1}}{h \varrho_{\chi}(\varrho_{\chi}+1)} \right| \nonumber\\
    &\quad\leq \sqrt{x} \left(\left(1+\frac{\log{x}}{\sqrt{x}}\right)^{\frac{1}{2}} \left(\frac{(\log{x})^2}{8\pi} + \frac{(\log\log{x} - \log{x} - 2\log{q})\log\log{x}}{2\pi} + \frac{\log{x}\log{q}}{2\pi} \right.\right. \nonumber\\
    &\quad\qquad\qquad\left.\left. + (\nu_1(x_0) + \nu_3(x_0) + 0.94873)\log{q} + \nu_2(x_0) + \nu_4(x_0) + 11.27041 \right.\right. \nonumber\\
    &\quad\qquad\qquad\qquad\qquad\qquad\qquad\qquad\qquad\qquad \left.\left.   - 0.2925\log{x} + 0.5850 \log\log{x} - \frac{0.247\log{x}}{\sqrt{x}}\right)\right. \nonumber\\
    &\quad\qquad\qquad\left. + \bigg(\left(1+\frac{\log{x}}{\sqrt{x}}\right)^{\frac{3}{2}} + 1\bigg) \left( \left(\frac{1}{\pi} + \frac{0.494 \log{x}}{\sqrt{x}}\right) \left(\frac{2\log{q} + \log{x} - 2\log\log{x}}{2} \right) \right.\right. \nonumber\\
    &\quad\qquad\qquad\qquad\qquad\qquad\qquad\qquad\qquad\qquad \left.\left. - 0.2667 + \frac{13.0034\log{x}}{\sqrt{x}}\right) \right) \nonumber\\
    &\quad\leq \left(f_1(x_0)\log{x} + f_2(x_0)\log{q} + f_3(x_0)\right)\sqrt{x}\log{x} 
    + f_4(x_0) \sqrt{x}\log{q} 
    + f_5(x_0) \sqrt{x} , \nonumber 
\end{align}
where
\begin{align*}
    f_1(x)
    &= \left(1+\frac{\log{x}}{\sqrt{x}}\right)^{\frac{1}{2}} \left( \frac{1}{8\pi} - \frac{\log\log{x}}{2\pi\log{x}} + \frac{1}{2\pi}\left(\frac{\log\log{x}}{\log{x}}\right)^2 \right) , \\
    f_2(x) 
    &= \left(1+\frac{\log{x}}{\sqrt{x}}\right)^{\frac{1}{2}} \left(\frac{1}{2\pi} - \frac{\log\log{x}}{\pi\log{x}}\right) , \\
    f_3(x)
    &= \left(1+\frac{\log{x}}{\sqrt{x}}\right)^{\frac{1}{2}} \left(\frac{0.5850\log\log{x}}{\log{x}} - 0.2925\right) \\
    &\qquad\qquad+ \bigg(\left(1+\frac{\log{x}}{\sqrt{x}}\right)^{\frac{3}{2}} + 1\bigg) \left(\frac{1}{2\pi} + \frac{0.247 \log{x} + 13.0034}{\sqrt{x}}\right) , \\
    f_4(x)
    &= \bigg(\left(1+\frac{\log{x}}{\sqrt{x}}\right)^{\frac{3}{2}} + 1\bigg) \left(\frac{1}{\pi} + \frac{0.494 \log{x}}{\sqrt{x}}\right) + \nu_1(x) + \nu_3(x) + 0.98473
    , \\
    f_5(x)
    &= (\nu_2(x) + \nu_4(x) + 11.27041) \left(1+\frac{\log{x}}{\sqrt{x}}\right)^{\frac{1}{2}} - 0.5334 .
\end{align*}
Now, $f_1(x)$ and $f_2(x)$ \textit{increase} in the ranges of $x$ we are considering and their limits are $1/(8\pi)$ and $1/(2\pi)$ respectively, so we have 
\begin{equation*}
    f_1(x) \leq \frac{1}{8\pi} 
    \qquad\text{and}\qquad
    f_2(x) \leq \frac{1}{2\pi} .
\end{equation*}
Moreover, the result follows with $k_1(x_0) = f_3(x_0) + \tfrac{f_5(x_0)}{\log{x_0}}$ and $k_2(x_0) = f_4(x_0)$.
\end{proof}

If $3\leq q\leq 10\,000$, the GRH is true, and $\varrho_{\chi} = 1/2 + i\gamma_{\chi}$ are the non-trivial zeros of $L(s,\chi)$, then computations show that
\begin{equation}\label{eqn:thank_Dave}
    \max_{\textrm{primitive }\chi\imod{q}} \sum_{|\gamma_{\chi}| \leq 200} \frac{1}{\sqrt{\tfrac{1}{4} + \gamma_{\chi}^2}} = \omega:= 21.664472\dots ,
\end{equation}
where this maximum is achieved at $q=9\,857$ (which is prime). A sensible hypothesis is that the $L$-functions with \textit{most} low-lying zeros will maximise the sum in question. Now, using \eqref{eqn:thank_Dave}, we can refine Theorem \ref{thm:soz_result} in the following result.

\begin{theorem}\label{thm:soz_result_small_moduli}
If the GRH is true, $h=\sqrt{x}\log{x}$, and $x\geq x_0 \geq 1.05\cdot 10^7$ then there exist constants $\widetilde{k_1}(x_0)$ and $\widetilde{k_2}(x_0)$ such that
\begin{align*}
    \left|\sum_{\varrho_{\chi}} \frac{(x+h)^{\varrho_{\chi}+ 1} - x^{\varrho_{\chi}+ 1}}{h \varrho_{\chi}(\varrho_{\chi}+1)} \right| 
    \leq \left(\frac{\log{x}}{8\pi} + \frac{\log{q}}{2\pi} + \widetilde{k_1}(x_0)\right)\sqrt{x}\log{x} + \widetilde{k_2}(x_0)\sqrt{x}\log{q}.
\end{align*}
We present a selection of admissible $\widetilde{k_i}(x_0)$ for several values of $x_0$ in Table \ref{tab:soz_results}.
\end{theorem}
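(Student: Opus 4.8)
The plan is to re-run the proof of Theorem \ref{thm:soz_result} almost verbatim, but to feed in the numerical input \eqref{eqn:thank_Dave} in place of the zero-density estimates for the low-lying zeros. The key observation is that the threshold $x_0 \geq 1.05\cdot 10^7$ is calibrated precisely so that $\eta(x_0) = \sqrt{x_0}/\log{x_0} > 200$; a one-line computation gives $\eta(1.05\cdot 10^7) = 200.4\ldots$. Since $\eta$ is increasing, the truncation height $T = \eta(x)$ used in the proof of Theorem \ref{thm:soz_result} satisfies $T \geq \eta(x_0) > 200$ throughout, so the block of zeros with $|\gamma_\chi| \leq 200$ can be split off and bounded by the exact constant $\omega$ of \eqref{eqn:thank_Dave}, rather than estimated via Lemmas \ref{lem:NZeros2} and \ref{lem:Lehman_gen}. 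This is the source of the improvement: the quantity being replaced carries a genuine $\log{q}$-dependence (through $2N(\tfrac57,\chi)$ in \eqref{eqn:soz_useful_zd} and the $\nu_1(x_0)\log{q}$ term in \eqref{eqn:soz_useful0}), whereas $\omega$ is an absolute constant, so the coefficient of $\sqrt{x}\log{q}$ in the final bound falls substantially, and $\widetilde{k_2}(x_0)$ becomes negative.

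Concretely, I would start from \eqref{eqn:GMStep3} with $h = \sqrt{x}\log{x}$ and $T = \eta(x)$, exactly as before. Since $T > 200$ and $1/\sqrt{\tfrac14 + t^2} < 1/t$, I would replace the splitting used there by
\begin{equation*}
    \sum_{|\gamma_\chi| < T} \frac{1}{\sqrt{\tfrac14 + \gamma_\chi^2}}
    \leq \omega + \sum_{200 \leq |\gamma_\chi| \leq T} \frac{1}{|\gamma_\chi|},
\end{equation*}
and estimate the remaining sum by applying Lemma \ref{lem:Lehman_gen} to the admissible function $\phi(t) = t^{-1}$ on the interval $[200, T]$. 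This is the direct analogue of \eqref{eqn:soz_useful1}, but with the constant split point $200$ in place of $\eta(x_0)$, and it no longer invokes \eqref{eqn:soz_useful_zd} or \eqref{eqn:soz_useful0} at all. The tail sum $\sum_{|\gamma_\chi| \geq T} \gamma_\chi^{-2}$ is handled by Lemma \ref{lem:Lehman_gen} with $\phi(t) = t^{-2}$, exactly as in \eqref{eqn:soz_useful2}, with no change.

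From here the reassembly mirrors the proof of Theorem \ref{thm:soz_result}: insert these estimates into \eqref{eqn:GMStep3}, expand $\log{T} = \tfrac12\log{x} - \log\log{x}$, and collect the coefficients of $\sqrt{x}(\log{x})^2$, $\sqrt{x}\log{x}\log{q}$, $\sqrt{x}\log{x}$, $\sqrt{x}\log{q}$ and $\sqrt{x}$ into functions $\widetilde{f_1},\dots,\widetilde{f_5}$ of $x$. One checks that $\widetilde{f_1} = f_1$ and $\widetilde{f_2} = f_2$, since these arose only from the main-term integrals and the factor $(1+h/x)^{1/2}$ and are insensitive to the split points; hence they are still increasing on $x \geq 1.05\cdot 10^7$ with limits $1/(8\pi)$ and $1/(2\pi)$, and so $\widetilde{f_1}(x)\leq 1/(8\pi)$ and $\widetilde{f_2}(x)\leq 1/(2\pi)$. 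Only $\widetilde{f_3},\widetilde{f_4},\widetilde{f_5}$ differ from their counterparts, through the substitution of $\omega$ for the $[\tfrac57,\eta(x_0)]$ contribution and the shift of the split point to $200$. The theorem then follows with $\widetilde{k_1}(x_0) = \widetilde{f_3}(x_0) + \widetilde{f_5}(x_0)/\log{x_0}$ and $\widetilde{k_2}(x_0) = \widetilde{f_4}(x_0)$, and Table \ref{tab:soz_results} is obtained by evaluating these at the listed $x_0$.

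I expect no conceptual obstacle; the work is bookkeeping, and the only points that genuinely need verification are: (i) the calibration $\eta(1.05\cdot 10^7) > 200$, which is what legitimises isolating the $|\gamma_\chi|\leq 200$ block and makes $[200,T]$ a valid interval for Lemma \ref{lem:Lehman_gen}; (ii) that $\phi(t) = t^{-1}$ meets the hypotheses of Lemma \ref{lem:Lehman_gen} on $[200,\infty)$ (it is positive, decreasing, $C^1$, with $\phi'\leq 0$); and (iii) the monotonicity of $\widetilde{f_1},\widetilde{f_2}$ on $x\geq 1.05\cdot 10^7$, which is inherited unchanged from the proof of Theorem \ref{thm:soz_result}. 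Everything else is the same explicit computation as before, now with the smaller, $q$-free input $\omega$ in place of a $\log{q}$-dependent upper bound.
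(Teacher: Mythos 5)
Your proposal matches the paper's strategy closely: cut at height $200$, replace the low block by the computed constant $\omega$ from \eqref{eqn:thank_Dave}, and handle everything above $200$ with Lemma \ref{lem:Lehman_gen} as before. You also correctly identify the two load-bearing facts: the calibration $\eta(1.05\cdot 10^7)\approx 200.4>200$ (which is exactly why the threshold $1.05\cdot 10^7$ appears, and what makes the cut at $200$ legitimate), and the fact that $\omega$ is $q$-free, so the $\log q$-bearing contribution from $2N(\tfrac57,\chi)$ and the $\nu_1(x_0)\log q$ term in \eqref{eqn:soz_useful0} are eliminated, driving $\widetilde{k_2}$ negative.

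There is one minor bookkeeping deviation: the paper keeps the split at $\eta(x_0)$, applying Lemma \ref{lem:Lehman_gen} with $\phi(t)=(\tfrac14+t^2)^{-1/2}$ on $[200,\eta(x_0)]$ to produce a new estimate $\widetilde{\nu_1}(x_0)\log q + \widetilde{\nu_2}(x_0)$ (this is \eqref{eqn:soz_useful0_small_moduli}), and then reuses \eqref{eqn:soz_useful1} unchanged on $[\eta(x_0),T]$. You instead merge both ranges into one application of $\phi(t)=t^{-1}$ on $[200,T]$. This is valid and gives numerically very close (though not identical) constants, since $(\tfrac14+t^2)^{-1/2}$ and $t^{-1}$ agree to within $O(t^{-3})$ for $t\geq 200$; however, your $\widetilde{k_i}$ would not exactly reproduce Table \ref{tab:soz_results}. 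One small slip: you state that $\widetilde{f_3}$ differs from $f_3$, but in fact $\widetilde{f_3}=f_3$ in the paper (and the same holds under your decomposition), because $f_3$ is determined entirely by the $\log T$-dependent pieces and the tail sum \eqref{eqn:soz_useful2}, neither of which is touched by the change of lower split point; only $\widetilde{f_4}$ and $\widetilde{f_5}$ actually change.
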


\begin{proof}
Assert $T = \sqrt{x}/\log{x}$ and re-trace the first steps of the proof of Theorem \ref{thm:soz_result} to see
\begin{align*}
    \left|\sum_{\varrho_{\chi}} \frac{(x+h)^{\varrho_{\chi}+ 1} - x^{\varrho_{\chi}+ 1}}{h \varrho_{\chi}(\varrho_{\chi}+1)} \right| 
    &\leq \sqrt{x} \left(1+\frac{h}{x}\right)^{\frac{1}{2}} \left(\omega
    + \sum_{200 \leq |\gamma_{\chi}| < \eta(x_0)} \frac{1}{\sqrt{\frac{1}{4} + \gamma_{\chi}^2}}
    + \sum_{\eta(x_0) \leq |\gamma_{\chi}| < T} \frac{1}{|\gamma_{\chi}|} \right) \\
    &\qquad\qquad\qquad\qquad\qquad\qquad\qquad+ \bigg(\left(1+\frac{h}{x}\right)^{\frac{3}{2}} + 1\bigg) \frac{x^{\frac{3}{2}}}{h} \sum_{|\gamma_{\chi}| \geq T} \frac{1}{\gamma_{\chi}^2} .
\end{align*}
Again, apply Lemma \ref{lem:Lehman_gen} with $\phi(t) = (1/4 + t^2)^{-1}$ to see
\begin{align}
    \sum_{200 \leq |\gamma_{\chi}| < \eta(x_0)} \frac{1}{\sqrt{\tfrac{1}{4} + \gamma_{\chi}^2}}  
    &\leq \left(\frac{0.494}{\sqrt{\tfrac{1}{4} + 200^2}} + \int_{200}^{\eta(x_0)} \frac{\pi^{-1}\,dt}{\sqrt{\tfrac{1}{4} + t^2}} \right) \log{q} 
    + \int_{200}^{\eta(x_0)} \frac{\pi^{-1} \log\frac{t}{2\pi}}{\sqrt{\frac{1}{4} + t^2}} \,dt \nonumber\\
    &\qquad\qquad + \frac{2\left(0.247\log{\frac{1}{400\pi}} + 6.894\right)}{\sqrt{\tfrac{1}{4} + 200^2}}  + 0.247 \int_{200}^{\eta(x_0)} \frac{t^{-1}\,dt}{\sqrt{\tfrac{1}{4} + t^2}} \nonumber\\
    &:= \widetilde{\nu_1}(x_0) \log{q} + \widetilde{\nu_2}(x_0) \label{eqn:soz_useful0_small_moduli}
\end{align}
Therefore, $h=\sqrt{x}\log{x}$, \eqref{eqn:soz_useful1}, \eqref{eqn:soz_useful2}, and \eqref{eqn:soz_useful0_small_moduli} imply
\begin{equation*}
\begin{split}
    \left|\sum_{\varrho_{\chi}} \frac{(x+h)^{\varrho_{\chi}+ 1} - x^{\varrho_{\chi}+ 1}}{h \varrho_{\chi}(\varrho_{\chi}+1)} \right| 
    \leq \left(\frac{\log{x}}{8\pi} + \frac{\log{q}}{2\pi}\right) \sqrt{x}\log{x}
    + f_3(x) \sqrt{x}\log{x} 
    &+ \widetilde{f_4}(x) \sqrt{x}\log{q} \\
    &\qquad\quad + \widetilde{f_5}(x) \sqrt{x} ,
\end{split}
\end{equation*}
where $f_3(x)$, $\nu_3(x)$, $\nu_4(x)$ were defined earlier,
\begin{align*}
    \widetilde{f_4}(x) &= \bigg(\left(1+\frac{\log{x}}{\sqrt{x}}\right)^{\frac{3}{2}} + 1\bigg) \left(\frac{1}{\pi} + \frac{0.494 \log{x}}{\sqrt{x}}\right) + \widetilde{\nu_1}(x) + \nu_3(x) , \quad\text{and}\\
    \widetilde{f_5}(x) &= (\omega + \widetilde{\nu_2}(x) + \nu_4(x)) \left(1+\frac{\log{x}}{\sqrt{x}}\right)^{\frac{1}{2}} - 0.5334 .
\end{align*}
This upper bound with $\omega= 21.664472\dots$ we have obtained here implies the result with 
\begin{equation*}
    \widetilde{k_1}(x_0) = f_3(x_0) + \frac{\widetilde{f_5}(x)}{\log{x_0}}
    \quad\text{and}\quad 
    \widetilde{k_2}(x_0) = \widetilde{f_4}(x) . \qedhere
\end{equation*}
\end{proof}

\section{The prime number theorem in short intervals}\label{sec:PNT4SI}

Recall that the prime number theorem tells us
\begin{equation*}
    \psi(x) \sim x, 
    \qquad\text{where}\qquad
    \psi(x) = \sum_{n\leq x} \Lambda(n),
\end{equation*}
so we should also expect $\psi(x+h) - \psi(x) \sim h$ for $h>0$; a relationship of this form is called the prime number theorem for short intervals. Cully-Hugill and Dudek established the latest explicit version of the prime number theorem for short intervals in \cite[Thm.~1]{CullyHugillDudek}; their corrected result is re-stated below.\footnote{There was a typo in \cite{CullyHugillDudek} that affected some of their constants; Cully-Hugill has corrected the typo and updated their result in her upcoming PhD thesis. Theorem \ref{thm:CH_D} is the result that will be stated in her thesis.}

\begin{theorem}[Cully-Hugill and Dudek]\label{thm:CH_D}
If $\sqrt{x}\log x\leq h \leq x^\frac{3}{4}$, $\log{x} \geq 40$, and the Riemann hypothesis is true, then
\begin{equation*}
    |\psi(x+h) - \psi(x)-h|< \frac{\sqrt{x} \log{x}}{\pi}  \log\bigg(\frac{h}{\sqrt{x}\log x} \bigg) + \frac{13}{6}\sqrt{x}\log{x} .
\end{equation*} 
\end{theorem}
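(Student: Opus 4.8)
\emph{Proposed approach.}
The plan is to establish this explicit short-interval estimate by the classical averaging argument: integrate $\psi$ to $\psi_1$, obtain the explicit formula, then recover $\psi(x+h)-\psi(x)$ from a second difference of $\psi_1$ via monotonicity. Write $\psi_1(y)=\int_0^y\psi(t)\,dt=\sum_{n\le y}\Lambda(n)(y-n)$ and introduce an auxiliary length $0<\delta<h$, to be fixed later as a suitable constant multiple of $\sqrt{x}\log x$. Since $\psi$ is non-decreasing one has $\psi(y)\le\delta^{-1}\bigl(\psi_1(y+\delta)-\psi_1(y)\bigr)\le\psi(y+\delta)$; applying this at $y=x+h$ and $y=x-\delta$ (for the upper bound), and at $y=x+h-\delta$ and $y=x$ (for the lower bound), yields
\begin{equation*}
\frac{\psi_1(x+h)-\psi_1(x+h-\delta)-\psi_1(x+\delta)+\psi_1(x)}{\delta}
\le\psi(x+h)-\psi(x)\le
\frac{\psi_1(x+h+\delta)-\psi_1(x+h)-\psi_1(x)+\psi_1(x-\delta)}{\delta},
\end{equation*}
so it suffices to control a second-order divided difference of $\psi_1$.

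Into this I would insert the classical explicit formula $\psi_1(y)=\tfrac{1}{2}y^2-\sum_{\rho}\tfrac{y^{\rho+1}}{\rho(\rho+1)}+c_0y+c_1+O(y^{-1})$, valid for $y>1$, where $c_0=-\log 2\pi$, $c_1=\zeta'(-1)/\zeta(-1)$, and the $O(y^{-1})$ term is contributed by the trivial zeros. Under the Riemann hypothesis $\rho=\tfrac{1}{2}+i\gamma$. The term $\tfrac{1}{2}y^2$ contributes exactly $h+\delta$ to the upper bound and $h-\delta$ to the lower bound; the linear and constant terms $c_0y+c_1$ are annihilated by the divided difference; and the $O(y^{-1})$ term contributes $O\bigl((\delta x)^{-1}\bigr)$. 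Hence the theorem reduces to bounding
\begin{equation*}
S:=\frac{1}{\delta}\sum_{\rho}\frac{(x+h+\delta)^{\rho+1}-(x+h)^{\rho+1}-x^{\rho+1}+(x-\delta)^{\rho+1}}{\rho(\rho+1)}
=\sum_{\rho}\frac{1}{\rho\delta}\left(\int_{x+h}^{x+h+\delta}t^{\rho}\,dt-\int_{x-\delta}^{x}t^{\rho}\,dt\right),
\end{equation*}
together with its companion for the lower estimate, by $\tfrac{\sqrt{x}\log x}{\pi}\log\bigl(h/(\sqrt{x}\log x)\bigr)+\tfrac{13}{6}\sqrt{x}\log x$ once $\log x\ge 40$.

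To estimate $|S|$ I would split the sum over $\gamma$ at two heights $T_1\asymp x/h\le T_2\asymp x/\delta$ and use three complementary bounds on a summand $D_\rho$. Keeping the cancellation between the two inner integrals gives $|D_\rho|\le(h+\delta)(x-\delta)^{-1/2}$, uniformly in $\gamma$; this is efficient for $|\gamma|\le T_1$, where $\tfrac{h}{\sqrt{x}}\cdot 2N(T_1)$ — estimated through the explicit Riemann--von Mangoldt formula \eqref{eqn:me_again} — equals $\tfrac{2\sqrt{x}}{\pi}\log\tfrac{x}{\pi e h}$ up to a lower-order term. For $T_1<|\gamma|\le T_2$ the crude bound $|D_\rho|\le 2(x+h+\delta)^{1/2}|\gamma|^{-1}$ is efficient, and summing it against $\sum|\gamma|^{-1}$ — via the $\phi(t)=t^{-1}$ case of a Lehman-type estimate for the zeros of $\zeta$, analogous to Lemma~\ref{lem:Lehman_gen} and available through Lemma~\ref{lem:BPT} — contributes $\tfrac{\sqrt{x}}{\pi}\bigl((\log\tfrac{T_2}{2\pi e})^2-(\log\tfrac{T_1}{2\pi e})^2\bigr)+O(\sqrt{x}\log x)$; since $(\log\tfrac{T_2}{2\pi e})^2-(\log\tfrac{T_1}{2\pi e})^2=\log\tfrac{T_2}{T_1}\cdot\log\tfrac{T_1T_2}{(2\pi e)^2}$, and $T_2/T_1=h/\delta$ with $T_1T_2\asymp x^{3/2}/(h\log x)$, the dominant part of this is exactly $\tfrac{\sqrt{x}\log x}{\pi}\log\bigl(h/(\sqrt{x}\log x)\bigr)$, the main term of the target. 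Finally, for $|\gamma|>T_2$ the only summable estimate is $|D_\rho|\le 4(x+h+\delta)^{3/2}(\delta\gamma^2)^{-1}$, and $\sum_{|\gamma|>T_2}\gamma^{-2}\ll T_2^{-1}\log T_2$ makes this $O(\sqrt{x}\log x)$. Taking $\delta$ equal to a fixed constant (below $1$) times $\sqrt{x}\log x$ — large enough to keep $T_2$, hence the middle and tail ranges, small, yet small enough that $\delta<h$ throughout $\sqrt{x}\log x\le h\le x^{3/4}$ — and then optimising every numerical constant is what produces the constant $13/6$ and the hypothesis $\log x\ge 40$.

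The crux is the explicit zero-sum estimate. Carrying every constant through the Riemann--von Mangoldt error term and the partial integrations is routine but lengthy; the genuinely delicate points are (i) choosing the cut-offs $T_1,T_2$ and the parameter $\delta$ so that the $h$-dependence collapses to the single clean term $\tfrac{\sqrt{x}\log x}{\pi}\log\bigl(h/(\sqrt{x}\log x)\bigr)$ with the sharp coefficient $1/\pi$, uniformly over all admissible $h$, and (ii) squeezing everything else below $\tfrac{13}{6}\sqrt{x}\log x$ once $\log x\ge 40$. It is at step (ii) that one benefits from subdividing the range of $\gamma$ into still more pieces — as in the refinement carried out in the next section (Theorem~\ref{thm:CH_D_refined}) — because a finer partition lets one use the sharpest of the three bounds on each short sub-range instead of conceding a constant factor at every crossover.
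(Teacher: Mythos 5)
The paper never reproves Theorem~\ref{thm:CH_D} — it is cited as a black box from Cully-Hugill and Dudek — but your proposal matches, step for step, the architecture the paper itself deploys when it proves the refinement Theorem~\ref{thm:CH_D_refined}: pass to $\psi_1$, form the second divided difference
$\delta^{-1}\bigl(\psi_1(x+h+\delta)-\psi_1(x+h)-\psi_1(x)+\psi_1(x-\delta)\bigr)$,
apply the explicit formula $\psi_1(y)=\tfrac{1}{2}y^2-\sum_\rho\tfrac{y^{\rho+1}}{\rho(\rho+1)}-y\log 2\pi+\epsilon(y)$ with $1.545<\epsilon(y)<2.069$ so that the parabolic piece yields $h\pm\delta$ and the linear piece cancels, and then bound the resulting zero sum by cutting at $T_1\asymp x/h$ and $T_2\asymp x/\delta$ and using, respectively, the cancellation-in-the-integral estimate with $N(T_1)$, the $|\gamma|^{-1}$ estimate via a Lemma~\ref{lem:BPT}-type partial integration, and the $\gamma^{-2}$ tail estimate. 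The one real point of departure is the smoothing device: you derive the divided difference directly from the monotonicity sandwich $\psi(y)\le\delta^{-1}(\psi_1(y+\delta)-\psi_1(y))\le\psi(y+\delta)$, so no further ingredient is needed, whereas the paper (following Cully-Hugill--Dudek) introduces the trapezoidal weight $w(n)$, writes $\sum_n\Lambda(n)w(n)$ as the same divided difference, and then has to control $\bigl|\psi(x+h)-\psi(x)-\sum_n\Lambda(n)w(n)\bigr|\le\sum_{\textrm{ramps}}\Lambda(n)$ by separate explicit Montgomery--Vaughan/Broadbent/Costa bounds on $\psi$ in short intervals. Both devices produce exactly the same zero sum $\sum_\rho S_\rho$, so the two routes are interchangeable; yours is marginally more self-contained since it dispenses with the auxiliary short-interval $\psi$-bounds. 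The remaining work — choosing $\delta$ a small multiple of $\sqrt{x}\log x$ and tuning $T_1,T_2$ so the main pieces recombine to $\tfrac{1}{\pi}\sqrt{x}\log x\log\!\bigl(h/(\sqrt{x}\log x)\bigr)$ with everything else below $\tfrac{13}{6}\sqrt{x}\log x$ for $\log x\ge 40$ — is the numerical bookkeeping you correctly flag, and the further subdivision into four $\gamma$-ranges is precisely what the paper's Theorem~\ref{thm:CH_D_refined} does to sharpen the constant.
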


Theorem \ref{thm:CH_D} would be an important ingredient in our proof of Theorem \ref{thm:exact_formula} later. However, we want to use the result in a broader range of $x$ and we will also only need to apply the result with $h = \sqrt{x}\log{x}$. Therefore, we prove the following refinement of Theorem \ref{thm:CH_D} in this special case and we will use that instead. All of the refinement we will obtain over Theorem \ref{thm:CH_D} stems from how we bound a particular sum over zeros.

\begin{theorem}\label{thm:CH_D_refined}
If the Riemann hypothesis is true and $x\geq x_0 \geq e^{10}$, then there exist constants $k_3(x_0), k_4(x_0) > 0$ such that
\begin{equation*}
    |\psi(x+\sqrt{x}\log{x}) - \psi(x) - \sqrt{x}\log{x}| < k_3(x_0)\sqrt{x}\log{x} - k_4(x_0) .
\end{equation*}
We present a selection of admissible $k_3(x_0)$, $k_4(x_0)$ for several values of $x_0\geq e^{10}$ in Table \ref{tab:CH_D_refined}.
\end{theorem}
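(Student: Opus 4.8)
The plan is to follow the classical route via the truncated explicit formula for $\psi_1(x) = \int_0^x \psi(t)\,dt$, since the second integral smooths the partial sums and makes the sum over zeros converge better. First I would write, for $h = \sqrt{x}\log x$,
\begin{equation*}
    \psi(x+h) - \psi(x) - h = \frac{\psi_1(x+h) - \psi_1(x)}{h} - h + \big(\text{smoothing error}\big),
\end{equation*}
where the smoothing error is controlled because $\psi(t)$ is non-decreasing: one has $\psi(x) \le \frac{\psi_1(x+h)-\psi_1(x)}{h}$ and $\psi(x+h) \ge \frac{\psi_1(x+h)-\psi_1(x)}{h}$ bounds from above and below by comparison with $\frac{\psi_1(x)-\psi_1(x-h)}{h}$ etc., so the smoothing costs only a further $O(\sqrt{x}\log x)$ once the main term is in hand. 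Then I would invoke the explicit formula for $\psi_1$, namely $\psi_1(x) = \frac{x^2}{2} - \sum_{\rho} \frac{x^{\rho+1}}{\rho(\rho+1)} + (\text{lower order: } x\text{-term, constant, and a small tail})$, so that
\begin{equation*}
    \frac{\psi_1(x+h) - \psi_1(x)}{h} - h = -\sum_{\rho} \frac{(x+h)^{\rho+1} - x^{\rho+1}}{h\,\rho(\rho+1)} + O\!\left(\frac{1 + \log x}{h}\right),
\end{equation*}
the lower-order pieces contributing negligibly since $h = \sqrt{x}\log x$ is large.

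The heart of the argument is then bounding $\big|\sum_{\rho} \frac{(x+h)^{\rho+1} - x^{\rho+1}}{h\,\rho(\rho+1)}\big|$ under RH, where $\rho = \tfrac12 + i\gamma$. This is exactly the zeta-analogue of the sum handled in Theorem \ref{thm:soz_result}, so I would run the identical splitting: write the summand over $|\gamma| < T$ as $\frac{1}{h\rho}\int_x^{x+h} t^{\rho}\,dt$, bounded by $\sqrt{x}(1+h/x)^{1/2}\sum_{|\gamma|<T}(\tfrac14+\gamma^2)^{-1/2}$, and the tail $|\gamma| \ge T$ by $\big((1+h/x)^{3/2}+1\big)\frac{x^{3/2}}{h}\sum_{|\gamma|\ge T}\gamma^{-2}$. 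With the choice $T = \sqrt{x}/\log x$ (so $h = x/T$) both pieces are of size $\sqrt{x}\log x$ times a slowly varying factor. To estimate $\sum_{|\gamma|<T}(\tfrac14+\gamma^2)^{-1/2}$ and $\sum_{|\gamma|\ge T}\gamma^{-2}$ explicitly, I would apply Lemma \ref{lem:BPT} with $\phi(t) = (\tfrac14+t^2)^{-1/2}$ on an initial range and $\phi(t) = t^{-1}$ on $[\text{something}, T]$ for the first sum (mirroring \eqref{eqn:soz_useful0} and \eqref{eqn:soz_useful1}), and $\phi(t) = t^{-2}$ for the tail (mirroring \eqref{eqn:soz_useful2}). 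Concretely one sub-divides $[2\pi, T]$ into several pieces and on each applies the BPT formula, which yields the $\frac{\log x}{\pi}\log(\cdot)$-type main term — here, since $h = \sqrt x\log x$ exactly, the $\log(h/(\sqrt x\log x)) = 0$, and the main term collapses into the constant $k_3(x_0)$. The extra sub-intervals (beyond what \cite{CullyHugillDudek} used) are precisely the source of the claimed refinement: more pieces means each $\phi(U)$, $\phi'(U)$ error term in Lemma \ref{lem:BPT} is evaluated at a larger $U$, shrinking the accumulated error.

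After collecting terms I would arrive at a bound of the form $\big(g_1(x_0)\log x + g_2(x_0)\big)\sqrt{x}\log x + g_3(x_0)\sqrt{x} + (\text{negligible})$, then argue as in Theorem \ref{thm:soz_result} that the coefficient of $\sqrt x(\log x)^2$ tends to $1/\pi$ but is in fact dominated (after absorbing the $\log(h/(\sqrt x\log x))=0$ cancellation) by a constant for $x \ge x_0$, so everything folds into $k_3(x_0)\sqrt x\log x - k_4(x_0)$ with $k_4(x_0) > 0$ coming from the strictly negative $\sqrt x$-scale contributions (the $-0.2667$, $-0.2925\log x$ type terms) once $x_0$ is large — this is why the statement writes $-k_4(x_0)$ rather than $+k_4(x_0)$. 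The main obstacle, as in Theorem \ref{thm:soz_result}, is purely bookkeeping: one must track how the monotonicity/convexity hypotheses of Lemma \ref{lem:BPT} ($\phi' \le 0$, $\phi'' \ge 0$) hold for each $\phi$ on each sub-interval, verify $2\pi \le U \le V$ on every piece (which forces $\log x_0 \ge 10$ so that $T = \sqrt{x_0}/\log x_0 \ge 2\pi$), and then optimize the number and endpoints of the sub-intervals to produce the tabulated $k_3(x_0)$, $k_4(x_0)$ — a numerical task rather than a conceptual one. I would also need to double-check the $\psi \leftrightarrow \psi_1$ smoothing inequalities carefully, since getting the direction of each inequality right (and confirming the smoothing error really is $O(\sqrt x\log x)$ with an explicit constant, not something larger) is the one genuinely non-mechanical analytic point.
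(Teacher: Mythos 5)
Your high-level plan — smooth via $\psi_1$, invoke the explicit formula, and bound the resulting zero-sum with Lemma \ref{lem:BPT} on sub-intervals — is the right family of ideas, but the concrete decomposition you write down fails at the very first step, and that failure propagates through everything after it.

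The problem: your identity
\begin{equation*}
\psi(x+h)-\psi(x)-h=\frac{\psi_1(x+h)-\psi_1(x)}{h}-h+(\text{smoothing error})
\end{equation*}
is formally true but useless, because the two pieces on the right are each of size $\asymp x$, not $\asymp\sqrt{x}\log x$. Indeed, $\psi_1(t)=t^2/2-\sum_\rho t^{\rho+1}/(\rho(\rho+1))+\cdots$ gives
\begin{equation*}
\frac{\psi_1(x+h)-\psi_1(x)}{h}=x+\frac{h}{2}-\sum_\rho\frac{(x+h)^{\rho+1}-x^{\rho+1}}{h\,\rho(\rho+1)}+O(1),
\end{equation*}
so your alleged ``main term'' $\frac{\psi_1(x+h)-\psi_1(x)}{h}-h$ equals $x-h/2+\cdots$, not a quantity of size $O(\sqrt{x}\log x)$, and the smoothing error must therefore be $\approx -x$. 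Equivalently, the single forward difference $\frac{\psi_1(x+h)-\psi_1(x)}{h}$ approximates $\psi$ \emph{at a point}, not the increment $\psi(x+h)-\psi(x)$; this is exactly the role it plays in \eqref{eqn:Step1} of Section \ref{sec:twisted_psi}, where it stands in for $\psi(x,\chi)$, and it cannot simultaneously stand in for the increment. You gesture at the monotone sandwich with $\frac{\psi_1(x)-\psi_1(x-h)}{h}$, which is the right instinct, but the clean way to execute it requires a \emph{second} difference of $\psi_1$ with an additional free smoothing parameter $\tau<h$, so that the $t^2/2$ main terms cancel down to $h+\tau$. That is exactly what the paper does: it introduces the trapezoidal weight $w(n)$ with ramps of width $\tau$, obtains $\sum_n\Lambda(n)w(n)=\frac{\psi_1(x+h+\tau)-\psi_1(x+h)-\psi_1(x)+\psi_1(x-\tau)}{\tau}$, and the sum over zeros becomes $S_\rho=\frac{(x+h+\tau)^{\rho+1}-(x+h)^{\rho+1}-x^{\rho+1}+(x-\tau)^{\rho+1}}{\tau\rho(\rho+1)}$ (four terms over $\tau$), not your two-term $\frac{(x+h)^{\rho+1}-x^{\rho+1}}{h\rho(\rho+1)}$.

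This cascades. Because you never introduce $\tau$, you cannot control the cost of the ramps (the paper pays $\tau+\frac{21}{20\tau}+2\sum_{x+h<n\le x+h+\tau}\Lambda(n)$ and bounds the last term by explicit Brun--Titchmarsh/Montgomery--Vaughan estimates, which is precisely what forces $\log x_0\ge10$, not the requirement $T\ge2\pi$), and you cannot optimize $\tau=\kappa_0\sqrt{x}\log x$. Your zero-sum also has the wrong shape, so ``mirroring Theorem \ref{thm:soz_result}'' does not apply: the paper actually splits $\sum_\rho S_\rho$ into \emph{four} pieces at the breakpoints $\kappa_1\eta(x_0)$, $\kappa_1 x/h$, $\kappa_2 x/\tau$ and numerically optimizes $(\kappa_0,\kappa_1,\kappa_2)$, which is where the refinement over Cully-Hugill--Dudek lives. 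Finally, the phrase ``$\log(h/(\sqrt x\log x))=0$ so the main term collapses'' is a feature of Theorem \ref{thm:CH_D} after the fact, not a mechanism inside the present proof; once $h=\sqrt x\log x$ is fixed from the start there is no such logarithm to collapse, and the entire content of the theorem is bounding $\sum_\rho|S_\rho|$ and the ramp errors. Fix the decomposition (trapezoidal weight, second difference, free $\tau$) and the rest of your sketch would then align with the paper's proof.
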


\begin{table}[]
    \centering
    \begin{tabular}{c|ccccc}
        $\log{x_0}$ & $\kappa_0$ & $\kappa_1$ & $\kappa_2$ & $k_3(x_0)$ & $k_4(x_0)$ \\
        \hline
        $10$ & $0.05989$ & $18.81137$ & $1.74663$ & $1.86054$ & $8.31357\cdot 10^{3}$ \\
        $20$ & $0.0457$ & $37.77813$ & $1.74663$ & $1.19669$ & $3.15402\cdot 10^{6}$ \\
        $30$ & $0.03579$ & $52.1484$ & $1.86645$ & $0.98255$ & $8.10901\cdot 10^{8}$ \\
        $40$ & $0.03167$ & $63.91776$ & $1.74663$ & $0.85812$ & $1.64282\cdot 10^{11}$ \\
        $50$ & $0.02886$ & $74.8239$ & $2.15968$ & $0.7904$ & $3.10314\cdot 10^{13}$ \\
        $60$ & $0.02683$ & $85.00441$ & $2.28091$ & $0.73439$ & $5.59133\cdot 10^{15}$ \\
        $70$ & $0.02519$ & $94.2064$ & $2.37349$ & $0.69103$ & $9.74775\cdot 10^{17}$ \\
        $80$ & $0.02405$ & $102.33995$ & $2.46139$ & $0.65608$ & $1.63943\cdot 10^{20}$ \\
        $90$ & $0.02297$ & $111.44257$ & $2.56007$ & $0.62711$ & $2.7664\cdot 10^{22}$ \\
        $100$ & $0.02212$ & $120.2197$ & $2.65929$ & $0.60254$ & $4.58854\cdot 10^{24}$ \\
        $150$ & $0.01895$ & $157.01747$ & $2.97554$ & $0.51832$ & $5.00022\cdot 10^{35}$ \\
        $200$ & $0.01717$ & $189.4314$ & $3.2531$ & $0.46706$ & $4.77543\cdot 10^{46}$ \\
        $250$ & $0.01566$ & $222.13937$ & $3.47886$ & $0.43129$ & $4.4118\cdot 10^{57}$ \\
        $500$ & $0.01254$ & $347.59407$ & $4.35967$ & $0.33843$ & $1.66057\cdot 10^{112}$
    \end{tabular}
    \caption{Admissible values for $k_3(x_0)$, $k_4(x_0)$ at several choices of $x_0$, including parameter choices for $\kappa_i$ which inform these computations, in Theorem \ref{thm:CH_D_refined}.}
    \label{tab:CH_D_refined}
\end{table}

\begin{proof}
Suppose that $h = \sqrt{x}\log{x}$ and $2 \leq \tau < h$ is a parameter to be chosen. We use the same set-up as in \cite{CullyHugillDudek}, although our proof will refine their treatment of a sum over zeros that appears toward the end. Let
\begin{align*}
    w(n) = 
    \begin{cases}
        \frac{n - x + \tau}{\tau} & \text{if } x-\tau \leq n < x,\\
        1 &\text{if } x \leq n \leq x+h, \\
        \frac{x+h+\tau -n}{\tau} &\text{if } x+h < n \leq x+h+\tau ,\\
       0  &\text{otherwise.}
    \end{cases}
\end{align*}
It follows from $w(n) \leq 1$ that
\begin{align*}
    \left| \psi(x+h) - \psi(x) - \sum_{n} \Lambda(n) w(n) \right|
    &\leq \sum_{\substack{x-\tau \leq n < x\\\text{or}\\x+h < n \leq x+h+\tau}} \Lambda(n) 
\end{align*}
and
\begin{equation}\label{eqn:smoother_wn}
    \sum_{n} \Lambda(n) w(n) = \frac{\psi_1(x+h+\tau) - \psi_1(x+h) - \psi_1(x) + \psi_1(x-\tau)}{\tau} ,
\end{equation}
in which $\rho = 1/2 + i\gamma$ are the non-trivial zeros of $\zeta(s)$, $1.545 < \epsilon(x) < 2.069$, and
\begin{equation}\label{eqn:explicit_fmla_psi1}
    \psi_1(x) 
    = \int_2^x \psi(t)\,dt
    = \sum_{n\leq x} \Lambda(n) (x-n)
    = \frac{x^2}{2} - \sum_{\rho} \frac{x^{\rho+1}}{\rho (\rho +1)} - x \log(2\pi) + \epsilon(x) .
\end{equation}
The last explicit formula is a refinement of \cite[Lem.~4]{Dudek} that is described in \cite{CullyHugillDudek}. Insert \eqref{eqn:explicit_fmla_psi1} into \eqref{eqn:smoother_wn} to see
\begin{equation*}
    \left|\sum_{n} \Lambda(n) w(n) 
    - h - \tau 
    + \sum_{\rho} \frac{(x+h+\tau)^{\rho +1} - (x+h)^{\rho +1} - x^{\rho +1} + (x-\tau)^{\rho +1}}{\tau \rho(\rho +1)} \right|
    \leq \frac{21}{20\tau} .
\end{equation*}
Combine these observations to see
\begin{equation*}
    \left| \psi(x+h) - \psi(x) - h + \sum_{\rho} S_{\rho} \right|
    \leq \tau + \frac{21}{20\tau} + 2 \sum_{x+h < n \leq x+h+\tau} \Lambda(n),
\end{equation*}
in which
\begin{equation*}
    S_{\rho} = \frac{(x+h+\tau)^{\rho +1} - (x+h)^{\rho +1} - x^{\rho +1} + (x-\tau)^{\rho +1}}{\tau \rho(\rho +1)} .
\end{equation*}
Now, Cully-Hugill and Dudek used results from \cite{Broadbent, MontgomeryVaughan, Costa} to tell us that if $x \geq x_0 \geq e^{10}$, then 
\begin{align*}
    &\psi(x+h+\tau) - \psi(x+h) \\
    &\quad\leq \frac{2\tau\log(x+h+\tau)}{\log{\tau}} + \alpha_1 (x+h+\tau)^\frac{1}{2} + \alpha_2 (x+h+\tau)^\frac{1}{3} - 0.999(x+h)^\frac{1}{2} - \frac{2}{3} (x+h)^\frac{1}{3} \\
    &\quad\leq \frac{2\tau\log(x+h+\tau)}{\log{\tau}} + \beta_1 x^{\frac{1}{2}} + \beta_2 x^{\frac{1}{3}} ,
\end{align*}
in which $\alpha_1= 1+ 1.93378 \cdot 10^{-8}$, $\alpha_2 = 2.69$, $\beta_1 = \sqrt{3}\alpha_1 - 0.999$, and $\beta_2 = 3^{\frac{1}{3}}\alpha_2 - \frac{2}{3}$. It follows that for all $\log{x} \geq \log{x_0} \geq 10$, we have
\begin{equation}\label{eqn:at_the_end}
    \left| \psi(x+h) - \psi(x) - h + \sum_{\rho} S_{\rho} \right|
    \leq \tau + \frac{21}{20\tau} + \frac{4\tau\log(x+h+\tau)}{\log{\tau}} + 2 \beta_1 x^{\frac{1}{2}} + 2 \beta_2 x^{\frac{1}{3}} .
\end{equation}
All that remains is to bound the sum over zeros; we do this in a different way than \cite{CullyHugillDudek}. To achieve this goal, recall that $|\gamma| \geq \gamma_1 := 14.13472\dots$, let $\tau = \kappa_0\sqrt{x}\log{x}$ with $0<\kappa_0<1$, $\eta(x) = \sqrt{x} / \log{x}$, $\kappa_1 > 0$, and $\kappa_2 > 0$ such that $\kappa_2 \geq \kappa_1 \kappa_0$. Split the sum into four parts:
\begin{align*}
    \Sigma_1 = \sum_{\gamma_1 \leq |\gamma| \leq \kappa_1\eta(x_0)} S_{\rho}, \quad
    \Sigma_2(x) &= \sum_{\kappa_1\eta(x_0) < |\gamma| < \frac{\kappa_1 x}{h}} S_{\rho}, 
    \quad \\
    \Sigma_3(x) &= \sum_{\frac{\kappa_1 x}{h} \leq |\gamma| \leq \frac{\kappa_2 x}{\tau}} S_{\rho}, \quad\text{and}\quad 
    \Sigma_4(x) = \sum_{|\gamma| \geq \frac{\kappa_2 x}{\tau}} S_{\rho}. 
\end{align*}

To bound $\Sigma_4(x)$, import \cite[Lem.~1(ii)]{Skewes}, which tells us that for all $T \geq \gamma_1$, we have
\begin{equation*}
    \sum_{\gamma \geq T} \frac{1}{\gamma^2} < \frac{\log{T}}{2\pi T} .
\end{equation*}
It follows from $\eta(x) = x/h$ that
\begin{align}
    |\Sigma_4(x)|
    &< \frac{h \sqrt{x}}{\tau} \left(\left(1+\frac{h+\tau}{x}\right)^{\frac{3}{2}} + \left(1+\frac{h}{x}\right)^{\frac{3}{2}} + 2\right) \sum_{|\gamma|\geq \frac{\kappa_2 x}{\tau}} \frac{\eta(x)}{\gamma^2}\nonumber\\
    &< \frac{h \sqrt{x}}{\tau\pi} \left(\left(1+\frac{h+\tau}{x}\right)^{\frac{3}{2}} + \left(1+\frac{h}{x}\right)^{\frac{3}{2}} + 2\right) \frac{\eta(x)\tau}{\kappa_2 x} \log{\eta(x)} \nonumber\\
    &= \frac{\sqrt{x}}{\kappa_2\pi} \left(\left(1+\frac{h+\tau}{x}\right)^{\frac{3}{2}} + \left(1+\frac{h}{x}\right)^{\frac{3}{2}} + 2\right) \log{\frac{\kappa_2 \eta(x)}{\kappa_0}} \nonumber\\
    &= \frac{\sqrt{x}\log{x}}{\kappa_2\pi} \left(\left(1+\frac{h+\tau}{x}\right)^{\frac{3}{2}} + \left(1+\frac{h}{x}\right)^{\frac{3}{2}} + 2\right) \left(\frac{1}{2} - \frac{\log\log{x}}{\log{x}} + \frac{\log{\frac{\kappa_2}{\kappa_0}}}{\log{x}}\right) \nonumber\\
    &< \frac{\sqrt{x}\log{x}}{\kappa_2\pi} \left(\left(1+\frac{1+\kappa_0}{\eta(x_0)}\right)^{\frac{3}{2}} + \left(1+\frac{\kappa_0}{\eta(x_0)}\right)^{\frac{3}{2}} + 2\right) \left(\frac{1}{2} + \frac{\log{\frac{\kappa_2}{\kappa_0}}}{\log{x_0}}\right) \nonumber\\
    &:= \ell_0(x_0) \sqrt{x}\log{x} . \label{eqn:Sigma4}
\end{align}
To bound $\Sigma_2(x)$, recall \eqref{eqn:me_again}, which informs us that
\begin{equation*}
    \frac{T}{2\pi} \log{\frac{T}{2\pi e}} - \frac{7}{8} - R(T)
    \leq N(T) 
    < \frac{T\log{T}}{2\pi} .
\end{equation*}
It follows that
\begin{align}
    |\Sigma_2(x)|
    &\leq 2 ( N(\kappa_1 \eta(x)) - N(\kappa_1 \eta(x_0)) ) \nonumber\\
    &\leq \frac{\kappa_1 \eta(x) \log{\kappa_1 \eta(x)}}{\pi} - \frac{\kappa_1 \eta(x_0)}{\kappa_0\pi} \log{\frac{\kappa_1 \eta(x_0)}{2\pi e \kappa_0}} + \frac{7}{4} + 2 R(\kappa_1 \eta(x_0)) \nonumber\\
    &:= \frac{\kappa_1 \eta(x) \log{\kappa_1 \eta(x)}}{\pi} - \ell_1(x_0) . \label{eqn:Sigma2}
\end{align}
To bound the remaining sums, we will use the following bound: 
\begin{align*}
    |S_{\rho}| 
    = \left|\frac{\tau^{-1}}{\rho} \left(\int_{x+h}^{x+h+\tau} t^{\rho}\,dt - \int_{x-\tau}^{x} t^{\rho}\,dt\right)\right|
    &\leq \frac{\sqrt{x}}{|\rho|} \left(1 + \sqrt{1+\frac{1+\kappa_0}{\eta(x)}}\right)
    := \frac{\ell_2(x) \sqrt{x}}{|\rho|} .
\end{align*}
Next, Lemma \ref{lem:BPT} with $\phi(t) = t^{-1}$ implies
\begin{align}
    &|\Sigma_3(x)| 
    \leq \sum_{\frac{\kappa_1 x}{h} \leq |\gamma| \leq \frac{\kappa_2 x}{\tau}} \frac{\ell_2(x) \sqrt{x}}{|\gamma|} \nonumber\\
    &\quad < \frac{2\ell_2(x_0) \bigg(\frac{\left(\log{x} + \max\left\{0, \log{\frac{\kappa_1\kappa_2}{4\pi^2 \kappa_0}}\right\}\right)}{4\pi} \log{\frac{\kappa_2}{\kappa_0\kappa_1}} + \frac{\kappa_0 R(\frac{\kappa_2 \eta(x_0)}{\kappa_0})}{\kappa_2 \eta(x_0)} + \frac{R(\kappa_1 \eta(x_0))}{\kappa_1 \eta(x)} + \frac{4.200 + 4.134 \log{\kappa_1 \eta(x_0)}}{\kappa_1^2 \eta(x_0)^2} \bigg)}{(\sqrt{x}\log{x})^{-1} \log{x_0}} \nonumber\\
    &\quad := \ell_3(x_0)\sqrt{x}\log{x} . \label{eqn:Sigma3}
\end{align}
Finally, Lemma \ref{lem:BPT} with $\phi(t) = (\tfrac{1}{4} + t^2)^{-\tfrac{1}{2}}$ implies
\begin{align}
    &|\Sigma_1(x)| 
    \leq \sum_{\gamma_1 \leq |\gamma| \leq \kappa_1 \eta(x_0)} \frac{\ell_2(x) \sqrt{x}}{|\rho|} \nonumber\\
    &\,\,\,< \ell_2(x_0) \left(\frac{1}{\pi} \int_{\gamma_1}^{\kappa_1 \eta(x_0)} \frac{\log{\frac{t}{2\pi}}\,dt}{\sqrt{\tfrac{1}{4} + t^2}} + \frac{2 R(\kappa_1 \eta(x_0))}{\sqrt{\tfrac{1}{4} + \kappa_1^2 \eta(x_0)^2}} + \frac{2 R(\gamma_1)}{\sqrt{\tfrac{1}{4} + \gamma_1^2}} + 0.04509 \right) 
    := \ell_4(x_0) . \label{eqn:Sigma1}
\end{align}

Collect the observations \eqref{eqn:Sigma4}, \eqref{eqn:Sigma2}, \eqref{eqn:Sigma3}, and \eqref{eqn:Sigma1} together to see that if $x\geq x_0\geq e^{10}$, then
\begin{align*}
    \sum_{\rho} |S_{\rho}|
    &\leq \left(\ell_0(x_0) + \ell_3(x_0) + \frac{\kappa_1 \eta(x_0) \log{\kappa_1 \eta(x_0)}}{\pi\sqrt{x_0}\log{x_0}}\right) \sqrt{x}\log{x} - \ell_1(x_0) + \ell_4(x_0) \\
    &\leq \ell_5(x_0) \sqrt{x}\log{x} + \ell_6(x_0) .
\end{align*}
Moreover, the upper bound in \eqref{eqn:at_the_end} is majorised by $\ell_7(x_0) \sqrt{x}\log{x}$, where
\begin{align*}
    \ell_7(x) 
    &= \kappa_0 + \frac{21}{20\kappa_0 x(\log{x})^2} + \max\left\{8\kappa_0, \frac{4\kappa_0\log(x+(1+\kappa_0)\sqrt{x}\log{x})}{\log\left(\kappa_0\sqrt{x}\log{x}\right)}\right\} + \frac{2 \beta_1}{\log{x}} + \frac{2 \beta_2 x^{-\frac{1}{6}}}{\log{x}} .
\end{align*}
It follows from \eqref{eqn:at_the_end} and the preceding observations that 
\begin{equation*}
    k_3(x_0) = \ell_5(x_0) + \ell_7(x_0)
    \qquad\text{and}\qquad
    k_4(x_0) = - \ell_6(x_0)
\end{equation*}
are the constants in the statement of the theorem, so all that remains is to choose good $\kappa_0$, $\kappa_1$, and $\kappa_2$ for each $x_0$. Here, a good choice should ensure the value of $k_3(x_0)$ is minimised, since this term contributes the most to the upper bound. To reduce the complexity of this optimisation problem, we set $\kappa_2 = \max\{1.74663,\kappa_0\kappa_1\}$; this choice follows from the requirement $\kappa_0\kappa_1\geq\kappa_2$ and some computational experiments which displayed that \textit{small} $\kappa_2$ are desirable. Next, we use the \texttt{gp\_minimize} command from the \texttt{Python} package \texttt{skopt} to choose good $\kappa_0$ and $\kappa_1$ for each $x_0$. The choices that we make in the end, as well as the resulting values for $k_3(x_0)$ and $k_4(x_0)$, are presented in Table \ref{tab:CH_D_refined}.
\end{proof}

\begin{remark}
As it stands, the broadest range of $x$ we could prove Theorem \ref{thm:CH_D_refined} for is $\log{x} \geq 10$, since this is the range that the upper bound in \eqref{eqn:at_the_end} holds for. Of course, one could find values for $\alpha_i$ and $\beta_i$ such that the upper bound in \eqref{eqn:at_the_end} holds for a broader range of $x$, but $e^{10}$ is already quite small, and we are only concerned with larger values of $x$.
\end{remark}

\begin{remark}
The \texttt{gp\_minimize} command that we used to find optimised choices for $\kappa_i$ uses a Gaussian process to search its way toward good parameter choices (given some constraints) in a noisy function. The benefit of this optimiser is that it is relatively fast, straightforward to implement, and the outcome from a more thorough optimiser shouldn't be noticeably better than our outcomes in the end.
\end{remark}

\section{Bounds for twisted Chebyshev functions}\label{sec:twisted_psi}

Throughout this section, suppose that the GRH is true, $\varrho_{\chi} = 1/2 + i\gamma_{\chi}$ are the non-trivial zeros of $L(s,\chi)$, and $\chi\neq\chi_0$ modulo $q\geq 3$ is primitive. The primary purpose of this section is to prove Theorem \ref{thm:exact_formula}, which is the key ingredient in our proof of Corollary \ref{cor:pntpaps}. To begin, we observe that
\begin{equation}\label{eqn:Step1}
    \left|\psi(x,\chi) - \frac{\psi_1(x+h,\chi) - \psi_1(x,\chi)}{h} \right|  
    \leq \sum_{x < n \leq x+h} \Lambda(n) ,
\end{equation}
in which $h>0$ is a parameter to be chosen. Note that \eqref{eqn:Step1} follows from the relationship
\begin{equation*}
    \frac{\psi_1(x+h,\chi) - \psi_1(x,\chi)}{h}
    = \psi(x,\chi) + \sum_{x < n \leq x+h} \chi(n) \Lambda(n) \left(1 + \frac{x-n}{h}\right) .
\end{equation*}
It follows from \eqref{eqn:Step1} and Theorem \ref{thm:CH_D_refined} that for all $x\geq x_0\geq e^{10}$, we have
\begin{equation}\label{eqn:Step1wChoice}
    \left|\psi(x,\chi) - \frac{\psi_1(x+\sqrt{x}\log{x},\chi) - \psi_1(x,\chi)}{\sqrt{x}\log{x}} \right| < k_3(x_0)\sqrt{x}\log{x} - k_4(x_0) .
\end{equation} 
Next, if $x\geq 2$ and $\chi$ is principal or primitive, then \cite[Lem.~2.1]{DudekGrenieMolteni} tells us
\begin{equation*}
    \psi_1(x,\chi) 
    = \delta_{\chi} \frac{x^2}{2} - \sum_{\varrho_{\chi}} \frac{x^{\varrho_{\chi} +1}}{\varrho_{\chi}(\varrho_{\chi}+1)} 
    + \mathfrak{a}(\chi) x + \mathfrak{b}(\chi) + \mathfrak{c}(x,\chi) ,
\end{equation*}
in which $a_{\chi} = \frac{1-\chi(-1)}{2}$, $-\tfrac{L'(s,\chi)}{L'(s,\chi)} = - \tfrac{1}{s} + b_{\chi} + O(s)$, $-\tfrac{L'(s-1,\chi)}{L'(s-1,\chi)} = - \tfrac{1}{s} + c_{\chi} + O(s)$,
\begin{align*}
    \delta_{\chi} &= 
    \begin{cases}
        1 &\text{if } \chi = \chi_0, \\
        0 &\text{if } \chi \neq \chi_0, 
    \end{cases} \\
    \mathfrak{a}(\chi) 
    &= (1+b_{\chi}) (1-a_{\chi}) (1-\delta_{\chi}) - \frac{L'(0,\chi)}{L(0,\chi)} (a_{\chi}(1-\delta_{\chi}) + \delta_{\chi}), \\
    \mathfrak{b}(\chi) 
    &= (1-c_{\chi}) a_{\chi} + \frac{L'(-1,\chi)}{L(-1,\chi)} (1-a_{\chi}) , \quad\text{and}\\
    \mathfrak{c}(x,\chi) &= a_{\chi} \log{x} - (1-a_{\chi}) (1-\delta_{\chi}) x\log{x} - \sum_{m=1}^\infty\frac{x^{1-2m-a_{\chi}}}{(2m+a_{\chi})(2m-1+a_{\chi})}.
\end{align*}
It follows from this explicit formula for $\psi_1(x)$ that if $0<h<x$, then
\begin{align}
    &\frac{\psi_1(x+h,\chi) - \psi_1(x,\chi)}{h} \nonumber\\
    &\qquad= \frac{1}{h} \left(\delta_{\chi} \frac{(x+h)^2 - x^2}{2} - \sum_{\varrho_{\chi}} \frac{(x+h)^{\varrho_{\chi} +1} - x^{\varrho_{\chi} +1}}{\varrho_{\chi}(\varrho_{\chi}+1)} 
    + \mathfrak{a}(\chi) h + \mathfrak{c}(x+h,\chi) - \mathfrak{c}(x,\chi)\right) \nonumber\\
    &\qquad= \delta_{\chi}\left(x + \frac{h}{2}\right) + \mathfrak{a}(\chi) + \frac{1}{h} \left(\mathfrak{c}(x+h,\chi) - \mathfrak{c}(x,\chi) - \sum_{\varrho_{\chi}} \frac{(x+h)^{\varrho_{\chi} +1} - x^{\varrho_{\chi} +1}}{\varrho_{\chi}(\varrho_{\chi}+1)} \right) . \label{eqn:expression} 
\end{align}
Using \eqref{eqn:expression}, we will prove the following result in Section \ref{ssec:smoothed_difference}; this will be an important ingredient in our proof of of Theorem \ref{thm:psi_chi_expl_fmla} (below), which unlocks our proof of Theorem \ref{thm:exact_formula}.

\begin{theorem}\label{thm:smoothed_difference}
If $\chi$ modulo $q\geq 3$ is primitive and non-principal, $x\geq x_0 \geq \max\{e^{10},q\}$, and $h=\sqrt{x}\log{x}$, then there exist constants $k_5(x_0)$, $k_6(x_0)$ (which will be defined in the proof) such that
\begin{equation*}
    \left|\frac{\psi_1(x+h,\chi) - \psi_1(x,\chi)}{h}\right| 
    < \left(\frac{\log{x}}{8\pi} + \frac{\log{q}}{2\pi} + k_5(x_0)\right) \sqrt{x}\log{x} + k_6(x_0)\sqrt{x} + 1.777 .
\end{equation*}
Admissible values for $k_5(x_0)$ at several values of $x_0\geq e^{10}$ are presented in Table \ref{tab:smoothed_difference}.
\end{theorem}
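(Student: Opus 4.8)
The plan is to substitute the decomposition \eqref{eqn:expression} into the triangle inequality and to observe that, once $h=\sqrt{x}\log{x}$ is fixed, the sum over the zeros $\varrho_{\chi}$ appearing there is precisely the quantity estimated in Theorem \ref{thm:soz_result}. Since $\chi$ is non-principal we have $\delta_{\chi}=0$, so \eqref{eqn:expression} reads
\[
\frac{\psi_1(x+h,\chi)-\psi_1(x,\chi)}{h}
= \mathfrak{a}(\chi) + \frac{\mathfrak{c}(x+h,\chi)-\mathfrak{c}(x,\chi)}{h}
- \frac{1}{h}\sum_{\varrho_{\chi}}\frac{(x+h)^{\varrho_{\chi}+1}-x^{\varrho_{\chi}+1}}{\varrho_{\chi}(\varrho_{\chi}+1)},
\]
the constant $\mathfrak{b}(\chi)$ having cancelled in the difference, and I would estimate the three pieces on the right in turn.

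First, Theorem \ref{thm:soz_result} applies verbatim under our hypotheses (the GRH, $x\geq x_0\geq e^{10}$, and $h=\sqrt{x}\log{x}$) and gives
\[
\Bigl|\tfrac1h\sum_{\varrho_{\chi}}\tfrac{(x+h)^{\varrho_{\chi}+1}-x^{\varrho_{\chi}+1}}{\varrho_{\chi}(\varrho_{\chi}+1)}\Bigr|
\leq \Bigl(\tfrac{\log x}{8\pi}+\tfrac{\log q}{2\pi}+k_1(x_0)\Bigr)\sqrt{x}\log x + k_2(x_0)\sqrt{x}\log q,
\]
with $k_1(x_0),k_2(x_0)$ taken from Table \ref{tab:soz_results}. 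This already produces the entire bracketed main term, so I would set $k_5(x_0)=k_1(x_0)$. For the residual term $k_2(x_0)\sqrt{x}\log q$ I would use $\log 3\leq\log q\leq\log x_0$ (this is the first point at which the hypothesis $x_0\geq q$ is used): it is then at most a constant multiple of $\sqrt{x}$, depending only on $x_0$, which is folded into $k_6(x_0)\sqrt{x}$.

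Second, I would bound $\mathfrak{a}(\chi)$ and the telescoping $\mathfrak{c}$-difference by splitting on the parity of $\chi$. If $\chi$ is even then $a_{\chi}=0$, so $\mathfrak{a}(\chi)=1+b_{\chi}$ and $\mathfrak{c}(x,\chi)=-x\log x-\sum_{m\geq 1}\tfrac{x^{1-2m}}{2m(2m-1)}$; differencing and dividing by $h$ gives $-1-\tfrac1h\int_x^{x+h}\log t\,dt$ together with a tail of size $O(x^{-2}/\log x)$, hence a contribution of absolute value at most $1+\log(x+h)+O(x^{-2}/\log x)$. If $\chi$ is odd then $a_{\chi}=1$, so $\mathfrak{a}(\chi)=-L'(0,\chi)/L(0,\chi)$ and $\mathfrak{c}(x,\chi)=\log x-\sum_{m\geq 1}\tfrac{x^{-2m}}{2m(2m+1)}$, and the $\mathfrak{c}$-difference divided by $h$ is $O(1/x)$. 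In either case the $\mathfrak{c}$-piece is at most $\log x+O(1)$; since $\log x\leq\sqrt{x}$ on $x\geq x_0\geq e^{10}$, the $\log x$ part goes into $k_6(x_0)\sqrt{x}$, while the $O(1)$ part, together with the ``$1$'' in $\mathfrak{a}(\chi)=1+b_{\chi}$, is what produces the explicit constant $1.777$. It then remains to bound $|b_{\chi}|$ in the even case and $|L'(0,\chi)/L(0,\chi)|$ in the odd case: I would import explicit estimates for these from the literature, which are of the form $c_1\log q+c_2$, and use $\log q\leq\log x_0$ once more to absorb them, as a further constant multiple of $\sqrt{x}$, into $k_6(x_0)$. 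Collecting everything yields $k_5(x_0)=k_1(x_0)$ and an explicit $k_6(x_0)$ built from $k_2(x_0)$, the $b_{\chi}$- and $L'(0,\chi)/L(0,\chi)$-bounds, and $\sup_{x\geq x_0}(1+\log x)/\sqrt{x}$.

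The main obstacle is that last step: securing sufficiently sharp and genuinely uniform explicit bounds for $b_{\chi}$ and $L'(0,\chi)/L(0,\chi)$ over all primitive non-principal $\chi$ modulo $q$. Everything else is either a direct appeal to Theorem \ref{thm:soz_result} or elementary manipulation of the explicit formula \cite[Lem.~2.1]{DudekGrenieMolteni} for $\psi_1(x,\chi)$; the one point requiring care is that $k_2(x_0)\sqrt{x}\log q$, whose sign is not fixed, must be majorised correctly via $\log 3\leq\log q\leq\log x_0$ so as not to corrupt the shape of the claimed inequality, and that the optimisation defining $k_6(x_0)$ is carried out against the decreasing functions $(1+\log x)/\sqrt{x}$ and $(\log x_0)/\sqrt{x}$ on $[x_0,\infty)$.
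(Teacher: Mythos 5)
Your decomposition is exactly the paper's: you read off $\delta_\chi=0$ from \eqref{eqn:expression}, apply Theorem \ref{thm:soz_result} to the zero sum, and then separately control $\mathfrak{a}(\chi)$ and the telescoped $\mathfrak{c}$-difference. The handling of the $\mathfrak{c}$-difference (Lemma \ref{lem:c_chi_diff} in the paper) and of $k_2(x_0)\sqrt{x}\log q$ via $\log 3\le\log q\le\log x_0$ also matches. But there is a genuine gap at the point you yourself flag as ``the main obstacle'', and it is not merely a technicality.

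You assert that the explicit bounds for $b_\chi$ and $|L'(0,\chi)/L(0,\chi)|$ are ``of the form $c_1\log q+c_2$.'' That is correct for $b_\chi$ (Lemma \ref{lem:Lemmabchi} gives $2.751\log q+23.878$), but it is false for $|L'(0,\chi)/L(0,\chi)|$. The available explicit bound (Lemma \ref{lem:logarithmicL}, from Ernvall-Hyt\"onen--Paloj\"arvi) is of the much larger order
\begin{equation*}
|L'(0,\chi)/L(0,\chi)| \;<\; 316.5 + 0.593\,(\log\log q)(\log q)^2 + 0.0758\,\sqrt{q}\log q + \log q,
\end{equation*}
with a $\sqrt{q}\log q$ term and a $(\log q)^2\log\log q$ term. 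These cannot be absorbed into $k_6(x_0)\sqrt{x}$ by the crude substitution $\log q\le\log x_0$; the whole point of the hypothesis $x_0\ge q$ is to permit $\sqrt{q}\log q\le\sqrt{x}\log x$ and $(\log q)^2\log\log q\le(\log x)^2\log\log x$, which the paper then feeds into the \emph{leading} coefficient $k_5$, not $k_6$. This is precisely why, in Table \ref{tab:smoothed_difference}, $k_5(x_0)$ is visibly larger than $k_1(x_0)$ (e.g.\ $5.872$ vs.\ $3.10557$ at $\log x_0=10$) and $k_6(x_0)$ is either $0$ or negative: the paper sets $k_5=\varsigma_4(x_0)$ (or $\varsigma_5(x_0)$) which contains $k_1(x_0)+\max\{k_2(x_0),0\}+0.0758+\tfrac{0.593\log\log x\,\log x}{\sqrt{x}}+\cdots$, whereas your plan $k_5=k_1$ with everything else dumped into $k_6$ simply does not account for the size of $\mathfrak{a}(\chi)$ in the odd case. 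Until you replace ``$c_1\log q+c_2$'' with the actual $\sqrt{q}\log q$-sized bound and route those contributions correctly (using $q\le x$), the odd-character case of your proof does not close, and your claimed identification of constants will not produce a valid inequality.
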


\begin{table}[]
    \centering
    \begin{tabular}{c|ccccc}
        $\log{x_0}$ & $k_5(x_0)$ & $k_6(x_0)$ & $\Omega_0(x_0)$ & $\Omega_1(x_0)$ & $\Omega_2(x_0)$ \\
        \hline
        $10$ & $5.872$ & $0.0$ & $7.73253$ & $7.8834\cdot 10^{-1}$ & $-8.31179\cdot 10^{3}$ \\
        $20$ & $3.69604$ & $0.0$ & $4.89272$ & $2.07932\cdot 10^{-2}$ & $-3.15402\cdot 10^{6}$ \\
        $30$ & $3.24308$ & $0.0$ & $4.22562$ & $3.12938\cdot 10^{-4}$ & $-8.10901\cdot 10^{8}$ \\
        $40$ & $3.0183$ & $0.0$ & $3.87641$ & $3.73481\cdot 10^{-6}$ & $-1.64282\cdot 10^{11}$ \\
        $50$ & $2.88272$ & $0.0$ & $3.67311$ & $3.92334\cdot 10^{-8}$ & $-3.10314\cdot 10^{13}$ \\
        $60$ & $2.79193$ & $0.0$ & $3.52631$ & $3.80107\cdot 10^{-10}$ & $-5.59133\cdot 10^{15}$ \\
        $70$ & $2.72685$ & $0.0$ & $3.41787$ & $3.48232\cdot 10^{-12}$ & $-9.74775\cdot 10^{17}$ \\
        $80$ & $2.67781$ & $0.0$ & $3.33388$ & $3.06221\cdot 10^{-14}$ & $-1.63943\cdot 10^{20}$ \\
        $90$ & $2.63006$ & $0.0$ & $3.25716$ & $2.60976\cdot 10^{-16}$ & $-2.7664\cdot 10^{22}$ \\
        $100$ & $2.06796$ & $0.0$ & $2.67049$ & $2.16984\cdot 10^{-18}$ & $-4.58854\cdot 10^{24}$ \\
        $150$ & $-0.39621$ & $-6.69263$ & $0.1221$ & $-6.69263\cdot 10^{0}$ & $-5.00022\cdot 10^{35}$ \\
        $200$ & $-1.72212$ & $-15.33454$ & $-1.25507$ & $-1.53345\cdot 10^{1}$ & $-4.77543\cdot 10^{46}$ \\
        $250$ & $-3.3014$ & $-23.99894$ & $-2.87012$ & $-2.39989\cdot 10^{1}$ & $-4.4118\cdot 10^{57}$ \\
        $500$ & $-12.37591$ & $-67.46897$ & $-12.03749$ & $-6.7469\cdot 10^{1}$ & $-1.66057\cdot 10^{112}$
    \end{tabular}
    \caption{Admissible values for $k_5(x_0)$, $k_6(x_0)$, $\Omega_0(x_0)$, $\Omega_1(x_0)$, and $\Omega_2(x_0)$ at several choices of $x_0$ for Theorems \ref{thm:smoothed_difference} and \ref{thm:psi_chi_expl_fmla}.}
    \label{tab:smoothed_difference}
\end{table}

Next, Ernvall-Hyt\"{o}nen and Paloj\"{a}rvi proved in \cite[Lem.~13]{ErnvallHytonenPalojarvi} that if $\chi$ is a non-primitive Dirichlet character modulo $q\geq 3$ that is induced by a primitive character $\chi^*$, then
\begin{equation}\label{eqn:psiNonPrimitive}
    \frac{\left|\psi_0(x,\chi)-\psi_0(x,\chi^*)\right|}{\log{x}}
    \leq 1.12\log{q} ,
    \qquad\text{where}\qquad
    \psi_0(x,\chi) = \frac{\psi(x^+,\chi)+\psi(x^-,\chi)}{2} ;
\end{equation}
this modification to the definition of $\psi(x,\chi)$ accounts for discontinuities that may be present at prime powers. A simple consequence of this definition is that
\begin{equation}\label{eqn:psi_psi_0}
    |\psi(x,\chi) - \psi_0(x,\chi)| \leq \frac{\log{x}}{2} .
\end{equation}
Finally, insert Theorem \ref{thm:smoothed_difference} into \eqref{eqn:Step1wChoice} and apply \eqref{eqn:psiNonPrimitive}, \eqref{eqn:psi_psi_0} to extend the result to any non-principal $\chi$; the end result follows.

\begin{theorem}\label{thm:psi_chi_expl_fmla}
If $x\geq x_0\geq \max\{e^{10},q\}$ and $\chi\neq\chi_0$ modulo $q\geq 3$, then
\begin{equation*}
\begin{split}
    |\psi(x,\chi)| 
    &< \left(\frac{\log{x}}{8\pi} + \frac{\log{q}}{2\pi} + \Omega_0(x_0)\right) \sqrt{x}\log{x} + \Omega_1(x_0) \sqrt{x} + \Omega_2(x_0) ,
\end{split}
\end{equation*}
in which $\Omega_0(x_0) = k_3(x_0) + k_5(x_0)$, $\Omega_2(x_0) = 1.777 - k_4(x_0)$, and
\begin{equation*}
    \Omega_1(x) = k_6(x) + \frac{(0.5 + 1.12\log{x})\log{x}}{\sqrt{x}} .
\end{equation*}
We present a selection of admissible $\Omega_i(x_0)$ for several values of $x_0\geq e^{10}$ in Table \ref{tab:smoothed_difference}. 
\end{theorem}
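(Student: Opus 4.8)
The plan is to prove the bound first for primitive non-principal characters, where it is almost immediate from the machinery already assembled, and then to deduce the general case by the standard reduction of a non-principal character to the primitive character that induces it.

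First, suppose $\chi$ is primitive and non-principal modulo $q\geq 3$, set $h=\sqrt{x}\log{x}$, and write $\Delta(x,\chi)=(\psi_1(x+h,\chi)-\psi_1(x,\chi))/h$. I would apply the triangle inequality in the form $|\psi(x,\chi)|\leq|\psi(x,\chi)-\Delta(x,\chi)|+|\Delta(x,\chi)|$, bound the first term using \eqref{eqn:Step1wChoice} (valid since $x\geq x_0\geq e^{10}$) and the second using Theorem \ref{thm:smoothed_difference} (valid since $x\geq x_0\geq\max\{e^{10},q\}$ and $\chi$ is primitive and non-principal), and simply add. This produces the stated inequality with $\Omega_0(x_0)=k_3(x_0)+k_5(x_0)$, $\Omega_2(x_0)=1.777-k_4(x_0)$, and $\sqrt{x}$-coefficient $k_6(x_0)$; since $\Omega_1(x_0)=k_6(x_0)+(0.5+1.12\log{x_0})\log{x_0}/\sqrt{x_0}\geq k_6(x_0)$, the claim follows in this case.

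For a general non-principal $\chi$ modulo $q\geq 3$, let $\chi^{*}$ modulo $q^{*}$ be the primitive character inducing $\chi$. Since $\chi\neq\chi_0$ we have $q^{*}\geq 3$, and $q^{*}\mid q$ gives $q^{*}\leq q\leq x_0\leq x$, so the primitive case applies to $\chi^{*}$; in particular the main term it produces carries $\log{q^{*}}\leq\log{q}$, so nothing is lost there. Next I would compare $\psi(x,\chi)$ with $\psi(x,\chi^{*})$: using \eqref{eqn:psiNonPrimitive} for $|\psi_0(x,\chi)-\psi_0(x,\chi^{*})|$ together with \eqref{eqn:psi_psi_0}, and noting that the jump of $\psi(\cdot,\chi)-\psi(\cdot,\chi^{*})$ at a prime power $p^{k}$ vanishes unless $p\mid q$ (in which case $\chi(p^{k})=0$ and the jump has size at most $\log{p}\leq\log{x}$), one obtains $|\psi(x,\chi)-\psi(x,\chi^{*})|\leq\tfrac{1}{2}\log{x}+1.12\log{q}\log{x}$. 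Combining this with the primitive bound for $\psi(x,\chi^{*})$ and using $\log{q}\leq\log{x}$, the total contribution beyond the primitive bound is at most $(0.5+1.12\log{x})\log{x}$; since $t\mapsto(0.5+1.12\log{t})\log{t}/\sqrt{t}$ is decreasing on $[x_0,\infty)$ for $x_0\geq e^{10}$, this is at most $(0.5+1.12\log{x_0})\log{x_0}\sqrt{x}/\sqrt{x_0}$, which, added to the $k_6(x_0)\sqrt{x}$ already present, yields the coefficient $\Omega_1(x_0)$. This completes the proof.

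The heavy lifting has all been done in Theorems \ref{thm:CH_D_refined} and \ref{thm:smoothed_difference} and in \eqref{eqn:Step1wChoice}, so I do not expect a real obstacle here. The only points needing care are the bookkeeping of constants in the two triangle-inequality steps and the non-primitive reduction: specifically, arranging it so that the discrepancies between $\psi$ and $\psi_0$ for $\chi$ and for $\chi^{*}$ cost only a single $\tfrac{1}{2}\log{x}$ rather than two (this is what is reflected in the ``$0.5$'' inside $\Omega_1$), and verifying the monotonicity of $(0.5+1.12\log{t})\log{t}/\sqrt{t}$ so that the $\sqrt{x}$-level correction is absorbed into $\Omega_1(x_0)$ uniformly for all $x\geq x_0$.
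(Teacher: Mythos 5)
Your proposal is correct and matches the paper's (very terse) proof, which simply says to insert Theorem \ref{thm:smoothed_difference} into \eqref{eqn:Step1wChoice} for the primitive case and then apply \eqref{eqn:psiNonPrimitive} and \eqref{eqn:psi_psi_0} to extend to general non-principal $\chi$. The only point worth remarking is that you were right to be careful about the ``$0.5$'' in $\Omega_1$: a blunt triangle inequality through $\psi_0$ on both sides would produce $1.0\log x$ rather than $0.5\log x$, and your observation that the jump terms cancel at $p^k$ with $p\nmid q$ (and that $\chi$ has no jump when $p\mid q$) is a valid route to recovering the stated constant, consistent with the paper's formula and with its monotonicity absorption into a $\sqrt{x}$-coefficient.
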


Using Theorem \ref{thm:psi_chi_expl_fmla}, we will prove Theorem \ref{thm:exact_formula} (below) in Section \ref{ssec:objective}. As a bonus, we also prove the following corollary of Theorem \ref{thm:exact_formula} in Section \ref{ssec:corollary1}; this is a simple extension that makes an important ingredient in \cite{LanguascoZaccagnini} explicit.

\begin{theorem}\label{thm:exact_formula}
If the GRH is true, $q\geq 3$, and $x\geq x_0 \geq\max\{e^{10},q\}$, then 
\begin{equation*}
    |\psi(x,\chi) - \delta_{\chi}x| \leq 
    \begin{cases}
        \frac{\sqrt{x} (\log{x})^2}{8\pi} + 1.12 (\log{x})^2 &\text{if $\delta_{\chi}=1$,} \\
        \left(\frac{\log{x}}{8\pi} + \frac{\log{q}}{2\pi} + \Omega_0(x_0)\right) \sqrt{x}\log{x} + \Omega_1(x_0) \sqrt{x} + \Omega_2(x_0) &\text{if $\delta_{\chi}=0$.}
    \end{cases}
\end{equation*}
\end{theorem}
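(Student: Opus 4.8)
The plan is to split on the value of $\delta_{\chi}\in\{0,1\}$; the two cases are independent and of quite different character, and in both cases the statement is a reduction to material already in hand.

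When $\delta_{\chi}=0$, i.e.\ $\chi\neq\chi_0$, there is nothing new to prove: Theorem~\ref{thm:psi_chi_expl_fmla} asserts precisely the displayed inequality, with $\Omega_0(x_0)=k_3(x_0)+k_5(x_0)$, $\Omega_2(x_0)=1.777-k_4(x_0)$, and $\Omega_1(x)=k_6(x)+(0.5+1.12\log x)\log x/\sqrt x$. So this branch is a one-line appeal to Theorem~\ref{thm:psi_chi_expl_fmla}, whose content is in turn packed into Theorems~\ref{thm:smoothed_difference}, \ref{thm:CH_D_refined}, and~\ref{thm:soz_result}.

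When $\delta_{\chi}=1$, i.e.\ $\chi=\chi_0$, I would argue directly. Since $\chi_0(n)\Lambda(n)=\Lambda(n)$ for $(n,q)=1$ and $\chi_0(n)=0$ otherwise, and $\Lambda$ is supported on prime powers, one has the exact identity
\[
    \psi(x,\chi_0) = \psi(x) - \sum_{p\mid q} \floor{\frac{\log x}{\log p}}\log p ,
\]
so that $|\psi(x,\chi_0)-x|\le |\psi(x)-x| + \sum_{p\mid q}\floor{\log x/\log p}\log p$. For the first term, invoke Schoenfeld's conditional estimate \cite[(6.2)]{Schoenfeld}, which under the Riemann hypothesis gives $|\psi(x)-x|<\sqrt x(\log x)^2/(8\pi)$ for $x\ge 73.2$, hence for all $x\ge e^{10}$ (this is the bound packaged as \eqref{eqn:principal_chi_case}). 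For the second term, each summand is at most $\log x$, so the sum is at most $\omega(q)\log x$; combining the elementary bound $\omega(q)\le 1.12\log q$, valid for every integer $q\ge 3$, with the hypothesis $q\le x_0\le x$, this is at most $1.12\log q\log x\le 1.12(\log x)^2$. Adding the two estimates gives the $\delta_\chi=1$ case.

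I do not expect a genuine obstacle: both branches are reductions, the heavy lifting for $\chi\neq\chi_0$ being already done in Theorem~\ref{thm:psi_chi_expl_fmla}, and for $\chi=\chi_0$ one needs only Schoenfeld plus a crude count of prime factors. The one point deserving a moment's care is checking that $\omega(q)\le 1.12\log q$ is admissible for all $q\ge 3$ --- the primorial $q=6$ is the tight case --- which follows from $2^{\omega(q)}\le q$ together with a direct check of the few small values of $\omega(q)$. Alternatively, one could route the prime-power tail through \eqref{eqn:psiNonPrimitive}, using that $\chi_0\bmod q$ is induced by the trivial character (whose $L$-function is $\zeta$) and the comparison \eqref{eqn:psi_psi_0}, at the cost of an extra $O(\log x)$ term that must then be absorbed into $1.12(\log x)^2$ via $q\le x$.
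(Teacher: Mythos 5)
Your argument is correct and is essentially the paper's own proof: the $\delta_\chi=0$ branch is a direct citation of Theorem~\ref{thm:psi_chi_expl_fmla}, and the $\delta_\chi=1$ branch reproduces the chain \eqref{eqn:Step0}--\eqref{eqn:principal_chi_case}, where the paper packages the prime-power tail estimate $\sum_{n\le x,\,(n,q)>1}\Lambda(n)\le 1.12\log q\log x$ as Lemma~\ref{lem:LambdaSyt} (Ernvall-Hyt\"{o}nen--Paloj\"{a}rvi) and then invokes $q\le x_0\le x$ to replace $\log q$ by $\log x$. One small caveat in your side remark: $2^{\omega(q)}\le q$ alone only yields $\omega(q)\le(\log 2)^{-1}\log q\approx 1.443\log q$, so to reach the constant $1.12$ you actually need the sharper primorial lower bound $q\ge\prod_{i\le\omega(q)}p_i$ (with $q=6$ the tight case, as you note), which is precisely what Lemma~\ref{lem:LambdaSyt} supplies off the shelf.
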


\begin{corollary}\label{cor:exact_formula}
If the GRH is true, $x\geq \max\{e^{10},q\}$, and $q\geq 3$, then 
\begin{equation*}
    |\theta(x,\chi) - \delta_{\chi}x| \leq 
    \begin{cases}
        \frac{\sqrt{x} (\log{x})^2}{8\pi} + 1.44270\sqrt{x}\log{x} + 1.12 (\log{x})^2 &\text{if $\delta_{\chi}=1$,} \\
        \left(\frac{\log{x}}{8\pi} + \frac{\log{q}}{2\pi} + 1.44270 + \Omega_0(x_0)\right) \sqrt{x}\log{x} + \Omega_1(x_0) \sqrt{x} + \Omega_2(x_0) &\text{if $\delta_{\chi}=0$.}
    \end{cases}
\end{equation*}
\end{corollary}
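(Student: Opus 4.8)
The plan is to deduce Corollary~\ref{cor:exact_formula} from Theorem~\ref{thm:exact_formula} via the triangle inequality together with an explicit bound for $\psi(x)-\theta(x)$. Since $|\chi(n)|\le 1$ for every character $\chi$ modulo $q$, we have
\[
    |\psi(x,\chi)-\theta(x,\chi)|
    = \left|\sum_{\substack{p^k\le x\\ k\ge 2}}\chi(p^k)\log p\right|
    \le \sum_{\substack{p^k\le x\\ k\ge 2}}\log p
    = \psi(x)-\theta(x),
\]
(with the principal case only removing the prime powers dividing $q$, hence still bounded by $\psi(x)-\theta(x)$). So it is enough to prove $\psi(x)-\theta(x)<1.44270\,\sqrt{x}\log{x}$ for $x\ge e^{10}$, and then to combine this with the relevant case of Theorem~\ref{thm:exact_formula} through $|\theta(x,\chi)-\delta_{\chi}x|\le|\psi(x,\chi)-\delta_{\chi}x|+|\psi(x,\chi)-\theta(x,\chi)|$.

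For the bound on $\psi(x)-\theta(x)$, I would use the standard decomposition $\psi(x)-\theta(x)=\sum_{k=2}^{K}\theta(x^{1/k})$ with $K=\floor{\log x/\log 2}$. The sum has exactly $K-1$ terms, and since $\theta$ is non-decreasing and $x^{1/k}\le\sqrt{x}$ for $k\ge 2$, we get $\psi(x)-\theta(x)\le(K-1)\,\theta(\sqrt{x})$. For $\sqrt{x}$ in the (very large) range where $\theta(y)<y$ has been verified numerically one takes $\theta(\sqrt{x})\le\sqrt{x}$, while for larger $x$ one invokes the Riemann hypothesis via Schoenfeld's bound $|\theta(y)-y|<\tfrac{1}{8\pi}\sqrt{y}\,(\log y)^2$ applied with $y=\sqrt{x}$, whose correction term $\tfrac{1}{8\pi}x^{1/4}(\log x)^2$ is then comfortably dominated by $\sqrt{x}$. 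In either regime $\psi(x)-\theta(x)\le\bigl(\tfrac{\log x}{\log 2}-1\bigr)\sqrt{x}$ up to a negligible excess, and since $\tfrac{1}{\log 2}-\tfrac{1}{\log x}\le\tfrac{1}{\log 2}<1.44270$ for all $x\ge e^{10}$, this gives $\psi(x)-\theta(x)<1.44270\,\sqrt{x}\log{x}$.

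Finally, I would split into the two cases of Theorem~\ref{thm:exact_formula}. When $\delta_{\chi}=1$, adding $1.44270\,\sqrt{x}\log{x}$ to $\tfrac{\sqrt{x}(\log x)^2}{8\pi}+1.12(\log x)^2$ yields the first line of the corollary; when $\delta_{\chi}=0$, the extra $1.44270\,\sqrt{x}\log{x}$ is simply absorbed into the $\sqrt{x}\log{x}$ coefficient, replacing $\Omega_0(x_0)$ by $1.44270+\Omega_0(x_0)$ while $\Omega_1(x_0)$ and $\Omega_2(x_0)$ are unchanged. I expect the only delicate point to be verifying that the constant $1.44270$ is admissible down to $x_0=e^{10}$: this hinges on retaining the ``$-1$'' in the term count $\floor{\log x/\log 2}-1$ (a cruder estimate with $\log x/\log 2$ terms would not close) and on checking that, precisely in the large-$x$ regime where one must fall back on Schoenfeld's estimate instead of $\theta(\sqrt{x})\le\sqrt{x}$, the correction $\tfrac{1}{8\pi}x^{1/4}(\log x)^2$ stays below the available slack.
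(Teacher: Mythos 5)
Your proposal is correct, and its overall architecture---deducing the corollary from Theorem~\ref{thm:exact_formula} via the triangle inequality together with a bound of the form $|\psi(x,\chi)-\theta(x,\chi)|<1.44270\,\sqrt{x}\log x$, obtained from the decomposition $\psi(x)-\theta(x)=\sum_{k\ge 2}\theta(x^{1/k})$---matches the paper exactly. The single place you diverge is how $\theta(\sqrt{x})$ is controlled. The paper cites Broadbent's explicit unconditional estimate $\theta(y)\le c_{\theta}\,y$ with $c_{\theta}=1+1.93378\cdot 10^{-8}$ (the same constant already introduced as $\alpha_1$ in the proof of Theorem~\ref{thm:CH_D_refined}), which gives $|\psi(x,\chi)-\theta(x,\chi)|/(\sqrt{x}\log x)<c_{\theta}/\log 2<1.44270$ in a single step for all $x\ge e^{10}$; the paper does \emph{not} retain the ``$-1$'' from the term count because $c_{\theta}/\log 2\approx 1.4426951$ is already comfortably below $1.44270$. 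Your alternative---keeping the $-1$, then splitting into a numerically verified regime where $\theta(y)<y$ and a Schoenfeld regime under RH for larger $y$---is logically sound (the $-1/\log x$ slack indeed dominates the Schoenfeld correction $\tfrac{(\log x)^2}{32\pi x^{1/4}}$ as soon as the numerical verification runs out), but it is substantially more delicate: it requires citing two separate auxiliary facts, checking a crossover point, and confirming the ``negligible excess'' is actually negligible at that crossover, all of which the single Broadbent inequality avoids.
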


\begin{remark}
Using \eqref{eqn:smoothed_difference_precise}, we can remove the requirement $x\geq q$ from Theorems \ref{thm:smoothed_difference}-\ref{thm:psi_chi_expl_fmla}, as long as $q$ is in an explicit range. In fact, if $q< 10^{30}$ or $q\leq 10\,000$, then \eqref{eqn:what_limits1} and \eqref{eqn:what_limits2} respectively provide explicit upper bounds that hold for $x\geq x_0\geq e^{10}$ and $x\geq x_0\geq 1.05\cdot 10^7$. Using \eqref{eqn:what_limits2}, we will give a refinement of Theorem \ref{thm:psi_chi_expl_fmla} in Theorem \ref{thm:psi_chi_expl_fmla_small_moduli} that holds for $x\geq x_0\geq 1.05\cdot 10^7$ and $q\leq 10\,000$. However, to obtain general upper bounds, whose secondary terms depend on $x$ only, we needed to assert $x\geq q$.
\end{remark}

\begin{remark}
At face value, it appears as though we could improve the outcome of Theorem \ref{thm:psi_chi_expl_fmla} (and each of it's consequences) by noting that \eqref{eqn:Step1} can be refined into
\begin{equation*}
    \left|\psi(x,\chi) - \frac{\psi_1(x+h,\chi) - \psi_1(x,\chi)}{h} \right|  
    \leq \sum_{\substack{x < n \leq x+h\\(n,q)=1}} \Lambda(n) ,
\end{equation*}
taking $h=\sqrt{x}\log{x}$ as usual. To see what we could achieve, note that
\begin{equation*}
    \sum_{\substack{n\leq x\\(n,q)=1}} \Lambda(n)
    = \sum_{n\leq x} \Lambda(n) \sum_{d|(n,q)} \mu(d)
    = \sum_{d|q} \mu(d) \sum_{n\leq \frac{x}{d}} \Lambda(n)
    \quad\text{and}\quad 
    \sum_{d|q} \frac{\mu(d)}{d} = \frac{\varphi(q)}{q};
\end{equation*}
here $\mu$ denotes the M\"{o}bius function. Use these observations and consider what we expect from the prime number theorem in short intervals to see
\begin{equation*}
    \sum_{\substack{x<n\leq x+h\\(n,q)=1}} \Lambda(n)
    = \sum_{d|q} \mu(d) \sum_{\frac{x}{d} < n \leq \frac{x+h}{d}} \Lambda(n) \sim h \sum_{d|q} \frac{\mu(d)}{d} = \frac{\varphi(q)}{q} h
    \quad\text{as}\quad
    x\to\infty .
\end{equation*}
The coefficient $\varphi(q)/q\to 1^-$ as $q\to\infty$ when prime $q$ are considered, whereas $\varphi(q)/q \to 0^+$ as $q\to\infty$ when highly composite $q$ (such as primorials) are observed. Therefore, our ``less refined'' bound in \eqref{eqn:Step1} is not as bad as one might expect, since we are considering general $q$.
\end{remark}

\subsection{Preliminary bounds}

We will need the following lemmas in what follows.

\begin{lemma}\label{lem:Lemmabchi}
If $\chi$ is a primitive, non-principal character modulo $q$ with $\chi(-1)=1$, then
\begin{equation*}
    |b_{\chi}| <
    \begin{cases}
        2.751\log{q} + 23.878 &\text{if }q\geq 3,\\
        \log{q} + 2\log\log{q} - 0.224 &\text{if }q\geq 10^{30}.
    \end{cases}
\end{equation*}
\end{lemma}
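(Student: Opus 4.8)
The plan is to recover $b_{\chi}$ from the Hadamard factorisation of the completed $L$-function and then estimate the ``arithmetic'' object that comes out. Since $\chi$ is primitive and even, $\xi(s,\chi)=(q/\pi)^{s/2}\Gamma(s/2)L(s,\chi)$ is entire of order $1$, and comparing the two standard expressions
\[
\frac{\xi'}{\xi}(s,\chi)=B(\chi)+\sum_{\varrho_{\chi}}\Bigl(\frac{1}{s-\varrho_{\chi}}+\frac{1}{\varrho_{\chi}}\Bigr),\qquad
\frac{\xi'}{\xi}(s,\chi)=\frac12\log\frac{q}{\pi}+\frac12\frac{\Gamma'}{\Gamma}\Bigl(\frac{s}{2}\Bigr)+\frac{L'}{L}(s,\chi)
\]
near $s=0$ — where the trivial zero of $L(s,\chi)$ at $s=0$ cancels the pole of $\frac{\Gamma'}{\Gamma}(s/2)$ — identifies the constant term beyond the $-1/s$ pole as $b_{\chi}=-B(\chi)+\tfrac12\log\tfrac{q}{\pi}-\tfrac{\gamma_E}{2}$, with $\gamma_E$ Euler's constant. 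Using $B(\chi)=-\tfrac{\xi'}{\xi}(1,\bar\chi)$ (from the functional equation $\xi(s,\chi)=W(\chi)\xi(1-s,\bar\chi)$ with $|W(\chi)|=1$) and $\tfrac{\Gamma'}{\Gamma}(1/2)=-\gamma_E-2\log 2$, this rearranges to the clean identity
\[
b_{\chi}=\frac{L'}{L}(1,\bar\chi)+\log\frac{q}{2\pi}-\gamma_E ,
\qquad\text{hence}\qquad
|b_{\chi}|\le \Bigl|\frac{L'}{L}(1,\bar\chi)\Bigr|+\log q-\log(2\pi)+\gamma_E .
\]
Evaluating the partial-fraction formula at $s=2$ instead gives the alternative expression $b_{\chi}=\sum_{\varrho_{\chi}}\bigl(\tfrac{1}{2-\varrho_{\chi}}+\tfrac{1}{\varrho_{\chi}}\bigr)-\tfrac{L'}{L}(2,\chi)$, which is better adapted to small $q$.

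For $q\ge 10^{30}$ I would feed in the sharp (GRH-conditional) estimate $\bigl|\tfrac{L'}{L}(1,\bar\chi)\bigr|\le 2\log\log q+c$: the $2\log\log q$ is the Mertens-type contribution of the primes up to about $(\log q)^2$ in $\tfrac{L'}{L}(1,\bar\chi)=-\sum_p\tfrac{\chi(p)\log p}{p-\chi(p)}+\dots$, the tail being controlled by GRH, and $c$ an explicit constant. Combined with $-\log(2\pi)+\gamma_E=-1.260\ldots$ this yields $|b_{\chi}|\le \log q+2\log\log q+(c-1.260\ldots)$, and pinning down $c$ gives the stated $-0.224$.

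For the full range $q\ge 3$ the $2\log\log q$ bound is wasteful (and its constant degrades for small $q$), so I would work from the $s=2$ identity. Bound $\bigl|\tfrac{L'}{L}(2,\chi)\bigr|\le\sum_n\Lambda(n)n^{-2}=-\tfrac{\zeta'}{\zeta}(2)<0.57$, and split the sum over zeros at $|\gamma_{\chi}|=5/7$: for the low zeros use $\bigl|\tfrac{1}{2-\varrho_{\chi}}+\tfrac{1}{\varrho_{\chi}}\bigr|\le\tfrac83$ (attained at $\gamma_{\chi}=0$ under GRH) together with the count $N(5/7,\chi)$ from Lemma \ref{lem:NZeros2}; for the remaining zeros apply Lemma \ref{lem:Lehman_gen} with the monotone convex $\phi(t)=2/\sqrt{(\tfrac14+t^2)(\tfrac94+t^2)}$, whose associated integrals $\int_{5/7}^{\infty}\phi$, $\int_{5/7}^{\infty}\phi(t)\log\tfrac{t}{2\pi}\,dt$, $\int_{5/7}^{\infty}\phi(t)/t\,dt$ all converge and can be evaluated (or rigorously bounded). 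Collecting the coefficients of $\log q$ ($\tfrac83$ times the $\log q$-coefficient of $N(5/7,\chi)$, plus $\tfrac1\pi\int_{5/7}^{\infty}\phi$, plus the error-term piece $0.494\,\phi(5/7)$) produces $2.751$, and assembling the $O(1)$ terms gives $23.878$.

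The main obstacle is quantitative rather than structural: obtaining the clean explicit constant $c$ in $\bigl|\tfrac{L'}{L}(1,\chi)\bigr|\le 2\log\log q+c$ for $q\ge 10^{30}$ (this requires care about exactly how much of the prime sum is retained and how GRH is invoked on the tail), and, for $q\ge 3$, rigorously evaluating the three auxiliary integrals attached to $\phi$, confirming that $5/7$ really lies below the first zero ordinate of every $L(s,\chi)$ with $q\ge 3$ so that no zero escapes the split, and checking that the crude $\tfrac83\,N(5/7,\chi)$ bound on the low zeros does keep the final $\log q$-coefficient at $2.751$. Because the ambient section already assumes the GRH, the usual obstruction — a possible exceptional (Siegel) zero near $s=1$, which would otherwise wreck any bound on $\tfrac{L'}{L}(1,\chi)$ — does not arise.
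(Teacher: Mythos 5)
The paper's own proof of this lemma is a one-line citation: the bound for $q\geq 3$ is imported from Lemma~17 of Ernvall-Hyt\"{o}nen and Paloj\"{a}rvi, and the bound for $q\geq 10^{30}$ from Corollary~2 of Chirre, Simoni\v{c}, and Hagen. You instead sketch a self-contained derivation from the Hadamard factorisation of $\xi(s,\chi)$, deducing the identity $b_{\chi}=\frac{L'}{L}(1,\bar\chi)+\log\frac{q}{2\pi}-\gamma_E$ and the alternative $s=2$ formula with a sum over zeros, and then outlining how to bound each. This is a genuinely different route — it essentially re-proves the two cited lemmas rather than invoking them — and the structural outline is sound. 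The identification of $b_{\chi}$ in terms of $\frac{L'}{L}(1,\bar\chi)$ is correct for an even primitive character, the $\frac{8}{3}$ bound on $\left|\frac{1}{2-\varrho_{\chi}}+\frac{1}{\varrho_{\chi}}\right|$ is what GRH gives at $\gamma_{\chi}=0$, and splitting the zero sum with $N(5/7,\chi)$ plus Lemma~\ref{lem:Lehman_gen} for $\phi(t)=2/\sqrt{(\tfrac14+t^2)(\tfrac94+t^2)}$ is the natural way to make it explicit. Your worry about ``confirming that $5/7$ really lies below the first zero ordinate'' is a non-issue: nothing in your splitting requires a zero-free region below $5/7$; you are simply partitioning ordinates.

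That said, as written this is a proposal, not a proof. You explicitly defer the entire quantitative content: the constant $c$ in $\left|\frac{L'}{L}(1,\bar\chi)\right|\le 2\log\log q+c$ for $q\geq 10^{30}$, the evaluation of the three auxiliary integrals attached to $\phi$, and the check that the $\log q$-coefficient really comes out to $2.751$ and the constant to $23.878$. These are precisely where the work lies, and they are not routine — the cited reference papers devote several pages to exactly these computations. Until those numbers are pinned down (or one reverts to citing the references, as the paper does), the lemma is not established. One further small point: the paper's first bound, $2.751\log q + 23.878$, is taken from a source that proves it unconditionally, whereas your sketch routes both branches through GRH; this is not wrong in the ambient (GRH-conditional) context of the paper, but it makes the first bound look more conditional than it actually is.
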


\begin{proof}
The first case is \cite[Lem.~17]{ErnvallHytonenPalojarvi} and the second is \cite[Cor.~2]{ChirreSimonicHagen}.
\end{proof}

\begin{lemma}\label{lem:logarithmicL}
If $\chi$ is primitive modulo $q\geq 3$ such that $\chi(-1)=-1$, then
\begin{equation*}
    \left|\frac{L'(0,\chi)}{L(0,\chi)}\right| 
    < \mathfrak{f}(q)
    < 316.5 + 0.593\log\log{q} (\log{q})^2 + 0.0758 \sqrt{q}\log{q} + \log{q},
\end{equation*}
where
\begin{equation*}
\mathfrak{f}(q) := \gamma + \log{2} + \left|\log{\frac{q}{\pi}}\right| +
\begin{cases}
    0.027\sqrt{q}\log{q}+0.067\sqrt{q}+316.229 &\text{if $3\leq q<4\cdot 10^5$,} \\
    3.715(\log{q})^2 &\text{if $4\cdot 10^5 \leq q < 10^{10}$,} \\
    (0.593\log\log q+1.205)(\log{q})^2 &\text{if $q\geq 10^{10}$.}
    \end{cases}
\end{equation*}
\end{lemma}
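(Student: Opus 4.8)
The plan is to relate $\frac{L'}{L}(0,\chi)$ to $\frac{L'}{L}(1,\overline{\chi})$ through the functional equation and then to feed in a known explicit bound for the latter. First note that, since $\chi$ is primitive and odd, the trivial zeros of $L(s,\chi)$ lie at the negative odd integers, so $L(0,\chi)\neq 0$ and $\frac{L'}{L}(0,\chi)$ is well defined.

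Write $\xi(s,\chi)=(q/\pi)^{(s+1)/2}\Gamma\bigl(\tfrac{s+1}{2}\bigr)L(s,\chi)$. Because $\chi$ is primitive with $\chi(-1)=-1$, the completed $L$-function satisfies $\xi(s,\chi)=W(\chi)\,\xi(1-s,\overline{\chi})$ with a root number $W(\chi)=\tau(\chi)/(i\sqrt q)$ of absolute value $1$. Taking the logarithmic derivative (the minus sign coming from $\tfrac{d}{ds}(1-s)=-1$) gives $\frac{\xi'}{\xi}(s,\chi)=-\frac{\xi'}{\xi}(1-s,\overline{\chi})$, that is,
\begin{equation*}
    \tfrac12\log\tfrac{q}{\pi}+\tfrac12\tfrac{\Gamma'}{\Gamma}\bigl(\tfrac{s+1}{2}\bigr)+\frac{L'}{L}(s,\chi)
    =-\tfrac12\log\tfrac{q}{\pi}-\tfrac12\tfrac{\Gamma'}{\Gamma}\bigl(\tfrac{2-s}{2}\bigr)-\frac{L'}{L}(1-s,\overline{\chi}).
\end{equation*}
Setting $s=0$ and using $\tfrac{\Gamma'}{\Gamma}(1)=-\gamma$ and $\tfrac{\Gamma'}{\Gamma}(\tfrac12)=-\gamma-2\log 2$, the gamma contributions collapse to $\gamma+\log 2$, yielding the clean identity
\begin{equation*}
    \frac{L'}{L}(0,\chi)=-\log\tfrac{q}{\pi}+\gamma+\log 2-\frac{L'}{L}(1,\overline{\chi}),
\end{equation*}
and hence $\bigl|\tfrac{L'}{L}(0,\chi)\bigr|\leq \gamma+\log 2+\bigl|\log\tfrac{q}{\pi}\bigr|+\bigl|\tfrac{L'}{L}(1,\overline{\chi})\bigr|$.

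Since $\overline{\chi}$ is again primitive and odd modulo $q$, it remains only to bound $\bigl|\tfrac{L'}{L}(1,\overline{\chi})\bigr|$, and here I would quote the sharpest available explicit estimates for $\bigl|\tfrac{L'}{L}(1,\psi)\bigr|$ over primitive $\psi\pmod q$: a crude, essentially elementary estimate already suffices in the range $3\leq q<4\cdot10^{5}$ (this is what produces the $\sqrt q\log q$ and $\sqrt q$ contributions in $\mathfrak{f}(q)$), whereas one should use the sharper $(\log q)^2$-type bounds in the ranges $4\cdot10^{5}\leq q<10^{10}$ and $q\geq 10^{10}$ respectively. Substituting the appropriate one of these three bounds into the previous display gives exactly the piecewise quantity $\mathfrak{f}(q)$ together with the strict inequality $\bigl|\tfrac{L'}{L}(0,\chi)\bigr|<\mathfrak{f}(q)$.

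To deduce the single uniform bound it then remains to majorise each of the three cases of $\mathfrak{f}(q)$ by $316.5+0.593\log\log q\,(\log q)^2+0.0758\sqrt q\log q+\log q$ for all $q\geq 3$. In each regime the leading new term of $\mathfrak{f}(q)$ is absorbed — the cutoff $q=4\cdot10^{5}$ is precisely where $0.0758\sqrt q\log q$ overtakes $3.715(\log q)^2$, which covers both the middle and (with the extra $\log\log q\,(\log q)^2$ slack) the high range, while the $0.027\sqrt q\log q+0.067\sqrt q$ term of the low range is comfortably dominated by $0.0758\sqrt q\log q$ — and $|\log(q/\pi)|\leq\log q$ handles the archimedean factor, with the constants folded into $316.5$. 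The one place where care is needed, and which I expect to be the main obstacle, is the verification at the lower endpoint $q=3$, where $\log q$ is small and the comparison is genuinely tight; this is a finite numerical check, but it must be carried out explicitly to confirm the claimed uniform bound.
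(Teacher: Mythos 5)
Your proposal is correct and takes essentially the same approach as the paper. The only presentational difference is that the paper proves the first inequality $\bigl|\tfrac{L'}{L}(0,\chi)\bigr|<\mathfrak{f}(q)$ by directly citing \cite[Cor.~20]{ErnvallHytonenPalojarvi}, whereas you unwind that citation: your functional-equation computation (yielding $\tfrac{L'}{L}(0,\chi)=-\log\tfrac{q}{\pi}+\gamma+\log 2-\tfrac{L'}{L}(1,\overline{\chi})$, then inserting piecewise bounds for $\bigl|\tfrac{L'}{L}(1,\overline\chi)\bigr|$) is precisely the mechanism behind that corollary, as the shape of $\mathfrak{f}(q)$ makes clear. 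For the second, uniform inequality, your case-by-case absorption argument, the observation $|\log(q/\pi)|\leq\log q$, and your flag that $q=3$ must be checked separately all match the paper, which verifies the three ranges $q<4\cdot10^5$, $4\cdot10^5\leq q<10^{10}$, $q\geq 10^{10}$ and then computes $\mathfrak{f}(3)\approx 317.72$ numerically to confirm it is still majorised.
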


\begin{proof}
Apply \cite[Cor.~20]{ErnvallHytonenPalojarvi} to obtain the first inequality. For all $q\geq 10^{10}$, we have
\begin{equation}\label{eqn:fq_large_q}
    \mathfrak{f}(q) 
    < \gamma + \log{\frac{2}{\pi}} + 0.593\log\log{q} (\log{q})^2 + 0.000278 \sqrt{q}\log{q} + \log{q} .
\end{equation}
For all $4\cdot 10^{5}\leq q < 10^{10}$, we have
\begin{align*}
    \mathfrak{f}(q) 
    &< \gamma + \log{\frac{2}{\pi}} + 0.0758 \sqrt{q}\log{q} + \log{q} .
\end{align*}
For all $3 < q < 4\cdot 10^{5}$, we have
\begin{equation*}
    \mathfrak{f}(q) 
    < \gamma + \log{\frac{2}{\pi}} + 316.229 + 0.0754 \sqrt{q}\log{q} + \log{q} .
\end{equation*}
It follows that for all $q>3$, we have
\begin{equation*}
    \mathfrak{f}(q) < 316.5 + 0.593\log\log{q} (\log{q})^2 + 0.0758 \sqrt{q}\log{q} + \log{q}
\end{equation*}
Moreover, if $q=3$, then $\mathfrak{f}(q) \approx 317.72$, which is also majorised by the preceding bound.
\end{proof}

\begin{lemma}\label{lem:sumx}
If $x\geq 2$, then
\begin{align*}
    \sum_{m=1}^\infty\frac{x^{1-2m-a_{\chi}}}{(2m+a_{\chi})(2m-1+a_{\chi})} 
    = a_{\chi} + (-x)^{a_{\chi}} \tanh^{-1}(x^{-1}) - x^{1-a_{\chi}} \log{\sqrt{1-x^{-2}}} . 
\end{align*}
\end{lemma}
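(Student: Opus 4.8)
The plan is to use the fact that $a_{\chi} = (1-\chi(-1))/2$ takes only the values $0$ and $1$, so that it suffices to verify the identity separately for $a_{\chi}=0$ and for $a_{\chi}=1$; for a general real parameter the sum $\sum_{m}y^{2m}/((2m+a)(2m-1+a))$ would involve incomplete beta functions, but for $a\in\{0,1\}$ it collapses to elementary functions. Throughout I would set $y=x^{-1}$, so that $0<y\le\tfrac12$ and every series appearing below converges absolutely, which legitimises all the term rearrangements. The first step is to pull $x^{1-a_{\chi}}$ out of the summand and observe that $(2m+a_{\chi})(2m-1+a_{\chi})$ is a product of two consecutive integers $n,n+1$ with $n=2m-1+a_{\chi}$, so partial fractions gives
\begin{equation*}
    \sum_{m=1}^{\infty}\frac{x^{1-2m-a_{\chi}}}{(2m+a_{\chi})(2m-1+a_{\chi})}
    = x^{1-a_{\chi}}\sum_{m=1}^{\infty} y^{2m}\left(\frac{1}{2m-1+a_{\chi}}-\frac{1}{2m+a_{\chi}}\right).
\end{equation*}

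Next I would record the two standard power series, valid for $|y|<1$,
\begin{equation*}
    \tanh^{-1}(y)=\sum_{k=0}^{\infty}\frac{y^{2k+1}}{2k+1}
    \qquad\text{and}\qquad
    \sum_{m=1}^{\infty}\frac{y^{2m}}{2m}=-\tfrac12\log(1-y^{2})=-\log\sqrt{1-y^{2}},
\end{equation*}
from which a reindexing ($k=m-1$) yields $\sum_{m\ge1}y^{2m}/(2m-1)=y\tanh^{-1}(y)$, and dividing $\tanh^{-1}(y)$ by $y$ and dropping the $k=0$ term yields $\sum_{m\ge1}y^{2m}/(2m+1)=y^{-1}\tanh^{-1}(y)-1$. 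With these identities in hand the two cases become pure algebra: for $a_{\chi}=0$ the bracket above is $1/(2m-1)-1/(2m)$, so the sum is $x$ times a combination of $y\tanh^{-1}(y)$ and $\log\sqrt{1-y^{2}}$; for $a_{\chi}=1$ the bracket is $1/(2m)-1/(2m+1)$, so the sum is a combination of $\log\sqrt{1-y^{2}}$ and $y^{-1}\tanh^{-1}(y)-1$. Substituting $y=x^{-1}$, clearing the stray factor of $y$ by means of $xy=1$, and collecting terms then completes the verification in each case, and the two cases together give the lemma.

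There is no real obstacle here: the argument is entirely elementary, and the only places that demand care are the reindexing that matches $\sum y^{2m}/(2m-1)$ and $\sum y^{2m}/(2m+1)$ to the $\tanh^{-1}$ series, the tracking of signs through the two cases, and the trivial simplification $xy=1$; absolute convergence, and hence the legitimacy of every rearrangement, is automatic from $y=x^{-1}\le\tfrac12$. As an alternative to quoting the two expansions one could instead integrate the geometric series $\sum_{m\ge1}y^{2m}=y^{2}/(1-y^{2})$ and $\sum_{m\ge0}y^{2m}=1/(1-y^{2})$ termwise from $0$ to $y$, but the direct comparison above is the most economical route.
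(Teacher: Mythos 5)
Your partial-fractions route is essentially the paper's argument in a slightly different guise: the paper rewrites the sum as $\int_0^{1/x} t^{-2}\sum_m \frac{t^{2m}}{2m}\,dt$ (resp.\ $\int_0^{1/x}t^{-2}(\tanh^{-1}t - t)\,dt$) and integrates term by term, which is exactly the antiderivative form of your decomposition $\frac{1}{(2m-1+a_\chi)(2m+a_\chi)} = \frac{1}{2m-1+a_\chi}-\frac{1}{2m+a_\chi}$; both routes land on the Maclaurin series for $\tanh^{-1}$ and $-\log\sqrt{1-y^2}$, split by $a_\chi\in\{0,1\}$.

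However, carrying out the sign-tracking you rightly flag as delicate shows that the lemma \emph{as printed} has a sign error in the $a_\chi=0$ branch, so your method will not reproduce the stated identity verbatim. With $y=x^{-1}$, the bracket $\tfrac{1}{2m-1}-\tfrac{1}{2m}$ gives
\begin{equation*}
x\left(y\tanh^{-1}(y) - \sum_{m\ge1}\frac{y^{2m}}{2m}\right)
= x\left(y\tanh^{-1}(y) + \log\sqrt{1-y^2}\right)
= \tanh^{-1}(x^{-1}) + x\log\sqrt{1-x^{-2}},
\end{equation*}
with a \emph{plus} sign before the logarithm, not the minus sign the lemma asserts. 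A quick numerical check at $x=2$ confirms this: the series sums to approximately $0.26162$, which equals $\tanh^{-1}(\tfrac12)+2\log\sqrt{3/4}$, whereas the printed right-hand side $\tanh^{-1}(\tfrac12)-2\log\sqrt{3/4}\approx 0.83699$. The $a_\chi=1$ branch is correct as stated, so the unified formula should read
\begin{equation*}
a_\chi + (-x)^{a_\chi}\tanh^{-1}(x^{-1}) - (-x)^{1-a_\chi}\log\sqrt{1-x^{-2}},
\end{equation*}
i.e.\ the final factor should be $(-x)^{1-a_\chi}$, not $x^{1-a_\chi}$. The paper's own proof makes the same slip at the final equality of the $a_\chi=0$ case (the antiderivative of $-\log\sqrt{1-t^2}/t^2$ is $\tanh^{-1}(t)+t^{-1}\log\sqrt{1-t^2}$, not $\tanh^{-1}(t)-t^{-1}\log\sqrt{1-t^2}$), so do not simply copy their conclusion. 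Downstream this is harmless because Lemma~\ref{lem:c_chi_diff} only needs an upper bound for the numerator and the erroneous version happens to be the larger one, but your write-up should state the corrected identity.
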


\begin{proof}
If $\chi(-1)=-1$, then $a_{\chi}=1$ and
\begin{align*}
    \sum_{m=1}^\infty\frac{t^{2m}}{2m(2m+1)} \bigg|_{t=\tfrac{1}{x}} 
    = \int_0^{1/x} t^{-2} \sum_{m=1}^\infty \frac{t^{2m+1}}{2m+1}\,dt 
    &= \int_0^{1/x} \frac{\tanh^{-1}{t} - t}{t^2}\,dt \\
    &= 1 -x\tanh^{-1}(x^{-1}) - \log{\sqrt{1-x^{-2}}} .
\end{align*}
Similarly, if $\chi(-1)=1$, then $a_{\chi}=0$ and
\begin{align*}
    \sum_{m=1}^\infty\frac{t^{2m-1}}{2m(2m-1)} \bigg|_{t=\tfrac{1}{x}} 
    = \int_0^{1/x} t^{-2} \sum_{m=1}^\infty \frac{t^{2m}}{2m}\,dt 
    &= - \int_0^{1/x} \frac{\log{\sqrt{1-t^2}}}{t^2}\,dt \\
    &= \tanh^{-1}(x^{-1}) - x\log{\sqrt{1-x^{-2}}} . \qedhere
\end{align*}
\end{proof}

\begin{lemma}\label{lem:Schoenfeld}
If the Riemann hypothesis is true and $x\geq 73.2$, then
\begin{equation*}
    |\psi(x) - x| < \frac{\sqrt{x} (\log{x})^2}{8\pi} .
\end{equation*}
\end{lemma}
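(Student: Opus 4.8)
Lemma~\ref{lem:Schoenfeld} is Schoenfeld's classical conditional bound (Theorem~10 of \cite{Schoenfeld}), so the most economical route is simply to quote it. To reconstruct a proof, I would run the Rosser--Schoenfeld smoothing argument in the form that already underpins the other estimates in this paper. Assuming the Riemann hypothesis, write the non-trivial zeros of $\zeta(s)$ as $\rho = 1/2 + i\gamma$. The starting point is the smoothed Chebyshev function $\psi_1(x) = \sum_{n\le x}\Lambda(n)(x-n)$ together with its explicit formula \eqref{eqn:explicit_fmla_psi1}, namely $\psi_1(x) = \tfrac{x^2}{2} - \sum_\rho \tfrac{x^{\rho+1}}{\rho(\rho+1)} - x\log(2\pi) + \epsilon(x)$ with $1.545 < \epsilon(x) < 2.069$.

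Since $\psi$ is non-decreasing, for any $0<h<x$ one has the sandwich
\[
\frac{\psi_1(x) - \psi_1(x-h)}{h} \;\le\; \psi(x) \;\le\; \frac{\psi_1(x+h) - \psi_1(x)}{h}.
\]
Substituting \eqref{eqn:explicit_fmla_psi1} into both ends, the polynomial parts combine to $x + h/2$, while the $-x\log(2\pi)$ and $\epsilon(x)$ contributions are only $O(1/h)$, so that
\[
|\psi(x) - x| \;\le\; \frac{1}{h}\left|\sum_\rho \frac{(x+h)^{\rho+1} - x^{\rho+1}}{\rho(\rho+1)}\right| + \frac{h}{2} + O\!\left(\frac{1}{h}\right),
\]
with the analogous estimate from the lower end. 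Everything has thus been reduced to estimating the sum over zeros, exactly the type of quantity handled in Section~\ref{sec:important_soz}.

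To bound $\sum_\rho (x+h)^{\rho+1} - x^{\rho+1})/(\rho(\rho+1))$ I would split at a height $T$. For $|\gamma|\le T$, write $\frac{(x+h)^{\rho+1} - x^{\rho+1}}{\rho(\rho+1)} = \frac{1}{\rho}\int_x^{x+h} t^{\rho}\,dt$, so under RH its modulus is at most $h\sqrt{x+h}\,(\tfrac14+\gamma^2)^{-1/2}$, and the remaining sum $\sum_{|\gamma|\le T}(\tfrac14+\gamma^2)^{-1/2}$ is evaluated by Riemann--Stieltjes integration against the zero-counting estimate \eqref{eqn:me_again} (precisely Lemma~\ref{lem:BPT} with $\phi(t)=(\tfrac14+t^2)^{-1/2}$), giving $\tfrac{1}{2\pi}\bigl(\log\tfrac{T}{2\pi}\bigr)^2 + O(\log T)$. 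For $|\gamma|>T$, bound the summand crudely by $2(x+h)^{3/2}\gamma^{-2}$ and apply $\sum_{\gamma>T}\gamma^{-2} < \tfrac{\log T}{2\pi T}$ from \cite[Lem.~1(ii)]{Skewes}. Choosing $h \asymp \sqrt{x}\,\log x$ and $T \asymp \sqrt{x}/\log x$ balances the two pieces, and the dominant contribution is $\tfrac{1}{h}\cdot h\sqrt{x}\cdot\tfrac{1}{2\pi}\bigl(\log\sqrt{x}\bigr)^2 = \tfrac{\sqrt{x}(\log x)^2}{8\pi}$ --- the constant $1/(8\pi)$ arising from $\bigl(\log\sqrt{x}\bigr)^2 = \tfrac14(\log x)^2$ times the density $\tfrac{1}{2\pi}$.

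The real work is not the heuristic above but the bookkeeping needed to make the inequality strict with constant \emph{exactly} $1/(8\pi)$ and no additive term: one must optimise $h$ and $T$ as functions of $x$, keep every lower-order term under control, and then dispose of the small range down to $x = 73.2$ by a direct computation using a finite table of low-lying zeros of $\zeta(s)$. Since this optimisation and finite verification are exactly what Schoenfeld carried out, in practice I would simply cite \cite[Theorem~10]{Schoenfeld}.
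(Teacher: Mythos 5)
Your proposal takes the same route as the paper, which simply quotes Schoenfeld's conditional bound: the paper cites \cite[(6.2)]{Schoenfeld} where you cite \cite[Thm.~10]{Schoenfeld}, but these refer to the same result in the same 1976 paper. The sketch of the smoothing argument you append is a reasonable outline of how Schoenfeld's bound is actually established, but it plays no role here since, like the paper, you conclude by citing the result off the shelf.
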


\begin{proof}
This is \cite[(6.2)]{Schoenfeld}.
\end{proof}

\begin{lemma}\label{lem:LambdaSyt}
If $x\geq 2$ and $q\geq 3$, then
\begin{equation*}
    \frac{1}{\log{x}}\sum_{\substack{n\leq x \\ (n,q)>1}}\Lambda(n)
    \leq \tau(q) 
    \leq 1.12\log{q} ,
    \quad\text{where}\quad
    \tau(q) =  
    \begin{cases}
        2 & \text{if }q=6,\\ 
        \log{q} & \text{if }q\neq 6.
    \end{cases}
\end{equation*}
\end{lemma}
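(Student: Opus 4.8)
The plan is to throw away almost everything at once: since $\Lambda$ is supported on prime powers, the constraint $(n,q)>1$ just restricts $n$ to powers of the primes dividing $q$, and a one-line estimate reduces the lemma to the two purely arithmetic inequalities
\[
\omega(q)\le\tau(q)\le 1.12\log q\qquad(q\ge 3),
\]
where $\omega(q)$ denotes the number of distinct prime divisors of $q$.

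In detail, I would first note that $\Lambda(n)\ne 0$ forces $n=p^m$, and that $(p^m,q)>1$ is then equivalent to $p\mid q$; hence
\[
\sum_{\substack{n\le x\\(n,q)>1}}\Lambda(n)
=\sum_{\substack{p\mid q\\ p\le x}}\left\lfloor\frac{\log x}{\log p}\right\rfloor\log p
\le\sum_{\substack{p\mid q\\ p\le x}}\log x
\le\omega(q)\log x ,
\]
because $\lfloor\log x/\log p\rfloor\log p\le\log x$. Dividing by $\log x>0$ leaves only the first displayed inequality to verify, and for that I would split on $k:=\omega(q)$. If $q=6$ then $\tau(6)=2=\omega(6)$ and there is nothing to do; otherwise $\tau(q)=\log q$ and it suffices to show $k<\log q$. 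When $k=1$, the modulus $q$ is a prime power $\ge 3$, so $\log q\ge\log 3>1$. When $k=2$ and $q\ne 6$, the least admissible modulus is $q=10$ (the values $7,8,9$ have $\omega=1$), so $\log q\ge\log 10>2$. When $k\ge 3$, $q$ is at least the $k$-th primorial, and a trivial induction (base case $2\cdot 3\cdot 5=30>e^3$; inductive step multiplying by $p_{k+1}\ge 7>e$) gives $\prod_{i\le k}p_i>e^k$, hence $\log q>k$. Finally, the second displayed inequality is immediate when $q\ne 6$, since then $\tau(q)=\log q\le 1.12\log q$, and when $q=6$ it is the numerical fact $2<1.12\log 6=2.0067\ldots$.

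I do not expect any genuine obstacle here: the only place that forces any thought is the modulus $q=6$, for which the crude bound $\omega(q)\log x=2\log x$ is essentially sharp (the ratio $\lfloor\log x/\log p\rfloor\log p/\log x$ tends to $1$ along prime powers), so the piecewise definition of $\tau$ is exactly what is needed to absorb that one case into both inequalities simultaneously.
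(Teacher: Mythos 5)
Your argument is correct. Note, however, that the paper does not prove this lemma at all: its entire proof is the citation ``This is [ErnvallHytonenPalojarvi, Lem.~12].'' So you have supplied a self-contained elementary verification where the paper defers to an external reference. Your route is exactly the natural one: since $\Lambda$ is supported on prime powers and $(p^m,q)>1$ iff $p\mid q$, the weighted count collapses to
\[
\sum_{\substack{p\mid q\\ p\le x}}\Bigl\lfloor\frac{\log x}{\log p}\Bigr\rfloor\log p\le\omega(q)\log x,
\]
and the lemma then reduces to the purely arithmetic facts $\omega(q)\le\tau(q)$ and $\tau(q)\le 1.12\log q$ for $q\ge 3$. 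Your case split on $\omega(q)=1,2,\ge 3$ (with $q=6$ isolated, and the primorial induction $\prod_{i\le k}p_i>e^k$ for $k\ge 3$) is complete and the numerics ($\log 3>1$, $\log 10>2$, $30>e^3$, $p_{k+1}\ge 7>e$, $1.12\log 6\approx 2.0067>2$) all check out. The only thing worth flagging is that this is not a comparison of two proofs but a proof versus a citation; for a reader wanting the lemma to be self-contained, your version is the more useful one.
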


\begin{proof}
This is \cite[Lem.~12]{ErnvallHytonenPalojarvi}.
\end{proof}

\subsection{Proof of Theorem \ref{thm:smoothed_difference}}\label{ssec:smoothed_difference}

Using Lemmas \ref{lem:Lemmabchi}-\ref{lem:sumx}, we establish the following ingredients.

\begin{lemma}\label{lem:a_chi}
If $\chi$ is non-principal and primitive modulo $q\geq 3$, then
\begin{equation*}
\begin{split}
    |\mathfrak{a}(\chi)| 
    &\leq 
    \begin{cases}
        2.751\log{q} + 24.878 &\text{if $a_{\chi}=0$}\\
        0.593\log\log{q} (\log{q})^2 + 0.0758 \sqrt{q}\log{q} + \log{q} + 316.5, &\text{if $a_{\chi}=1$}
    \end{cases} \\
    &< 0.593\log\log{q} (\log{q})^2 + 0.0758 \sqrt{q}\log{q} + 2.751\log{q} + 316.5 .
\end{split}
\end{equation*}
Moreover, if $q\geq 10^{30}$, then we can refine the preceding bound into
\begin{equation*}
    |\mathfrak{a}(\chi)| 
    < 0.593\log\log{q} (\log{q})^2 + 0.000278 \sqrt{q}\log{q} + \log{q} + 0.776 .
\end{equation*}
\end{lemma}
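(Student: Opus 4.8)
The plan is to use that $\chi$ non-principal forces $\delta_{\chi}=0$, so that the definition of $\mathfrak{a}(\chi)$ collapses to
\[
    \mathfrak{a}(\chi) = (1+b_{\chi})(1-a_{\chi}) - \frac{L'(0,\chi)}{L(0,\chi)}\,a_{\chi},
\]
and then to split on the parity of $\chi$, since $a_{\chi}\in\{0,1\}$.

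First I would treat the even case $\chi(-1)=1$, that is $a_{\chi}=0$. Here the formula above is simply $\mathfrak{a}(\chi)=1+b_{\chi}$, so the triangle inequality and the first part of Lemma \ref{lem:Lemmabchi} give $|\mathfrak{a}(\chi)|\le 1+|b_{\chi}| < 2.751\log{q}+24.878$, while for $q\ge 10^{30}$ the second part of Lemma \ref{lem:Lemmabchi} gives $|\mathfrak{a}(\chi)| < \log{q}+2\log\log{q}+0.776$. Next I would treat the odd case $\chi(-1)=-1$, that is $a_{\chi}=1$, where the formula reads $\mathfrak{a}(\chi)=-L'(0,\chi)/L(0,\chi)$; Lemma \ref{lem:logarithmicL} then gives directly $|\mathfrak{a}(\chi)| < 316.5 + 0.593\log\log{q}\,(\log{q})^2 + 0.0758\sqrt{q}\log{q} + \log{q}$, and for $q\ge 10^{30}$ the sharper estimate \eqref{eqn:fq_large_q}, together with the numerical fact $\gamma+\log(2/\pi)<0.776$, gives $|\mathfrak{a}(\chi)| < 0.593\log\log{q}\,(\log{q})^2 + 0.000278\sqrt{q}\log{q} + \log{q} + 0.776$.

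Finally I would merge the two parity estimates into the single clean bounds claimed. For $q\ge 3$ one has $\log\log{q}\ge 0$ and $\log{q}>0$, so the odd-case bound is dominated by $0.593\log\log{q}(\log{q})^2 + 0.0758\sqrt{q}\log{q} + 2.751\log{q} + 316.5$ after replacing $\log{q}$ by $2.751\log{q}$, and the even-case bound $2.751\log{q}+24.878$ lies below the same expression because $0.593\log\log{q}(\log{q})^2 + 0.0758\sqrt{q}\log{q} + 316.5 > 24.878$. The identical comparison, now with the constants coming from \eqref{eqn:fq_large_q} and the second part of Lemma \ref{lem:Lemmabchi}, yields the refined bound for $q\ge 10^{30}$; the only point needing a word is that $2\log\log{q}$ from the even case is absorbed by, say, $0.593\log\log{q}(\log{q})^2$ once $q\ge 10^{30}$.

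I do not expect a genuine obstacle: the argument is essentially substitution of $\delta_{\chi}=0$, a split on parity, and invocation of Lemmas \ref{lem:Lemmabchi}--\ref{lem:logarithmicL}. The only mildly delicate step is the bookkeeping showing that one formula majorises both parity cases (and that the $q\ge 10^{30}$ refinement is valid); this reduces to elementary comparisons between $\log{q}$, $\log\log{q}$, $(\log{q})^2$, and $\sqrt{q}\log{q}$ on the relevant ranges.
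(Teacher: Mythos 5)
Your argument is correct and follows the same route as the paper's proof: substitute $\delta_{\chi}=0$ to reduce $\mathfrak{a}(\chi)$ to $1+b_{\chi}$ or $-L'(0,\chi)/L(0,\chi)$ according to parity, then invoke Lemmas \ref{lem:Lemmabchi} and \ref{lem:logarithmicL} (and \eqref{eqn:fq_large_q} for the refinement). The paper's proof is more terse, leaving the final merge-of-cases bookkeeping implicit, while you spell it out; the substance is identical.
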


\begin{proof}
If $\chi$ is non-principal, then
\begin{equation*}
    \mathfrak{a}(\chi) =
    \begin{cases}
        1+b_{\chi} &\text{if $\chi(-1) = 1$,}\\
        - \frac{L'(0,\chi)}{L(0,\chi)} &\text{if $\chi(-1) = -1$.}
    \end{cases}
\end{equation*}
Apply Lemma \ref{lem:Lemmabchi} when $\chi(-1)=1$ and Lemma \ref{lem:logarithmicL} when $\chi(-1)=-1$. The refinement follows easily from \eqref{eqn:fq_large_q}.
\end{proof}

\begin{lemma}\label{lem:c_chi_diff}
If $\log{x}\geq 10$, $h=\sqrt{x}\log{x}$, and $\chi\neq\chi_0$ modulo $q\geq 3$ is primitive, then
\begin{equation*}
    \left|\frac{\mathfrak{c}(x+h,\chi) - \mathfrak{c}(x,\chi)}{h}\right|
    \leq \frac{a_{\chi}}{x} + (1-a_{\chi})\left(\log{x} + 1 + \frac{\log{x}}{\sqrt{x}}\right) + \frac{7\cdot 10^{-5}}{h} .
\end{equation*}
\end{lemma}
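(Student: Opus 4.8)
The plan is to use the explicit formula for $\mathfrak{c}(x,\chi)$ given earlier in the section,
\[
    \mathfrak{c}(x,\chi) = a_{\chi} \log{x} - (1-a_{\chi})(1-\delta_{\chi}) x\log{x} - \sum_{m=1}^\infty\frac{x^{1-2m-a_{\chi}}}{(2m+a_{\chi})(2m-1+a_{\chi})},
\]
and estimate the difference $\mathfrak{c}(x+h,\chi) - \mathfrak{c}(x,\chi)$ term by term, splitting into the two cases $a_{\chi}=0$ (i.e.\ $\chi(-1)=1$) and $a_{\chi}=1$. Since $\chi$ is non-principal we have $\delta_{\chi}=0$, so the middle term is simply $-(1-a_{\chi}) x\log{x}$. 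In each case the bound is to be divided by $h=\sqrt{x}\log{x}$ at the end.

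First I would treat the logarithmic term: $|(x+h)^{a_{\chi}}\log$-type pieces. When $a_{\chi}=1$ the term $a_{\chi}\log x$ contributes $\log(1+h/x) = \log(1 + \log x/\sqrt{x}) \le \log x / \sqrt{x}$ after dividing by... actually more carefully, $\log(x+h)-\log x \le h/x$, so after dividing by $h$ this contributes at most $1/x$; this is the source of the $a_{\chi}/x$ term. When $a_{\chi}=0$ this term is absent. Next, the term $-(1-a_{\chi})x\log x$: its increment is $-(1-a_{\chi})\big((x+h)\log(x+h) - x\log x\big)$. By the mean value theorem $(x+h)\log(x+h) - x\log x = h(\log\xi + 1)$ for some $\xi\in(x,x+h)$, so dividing by $h$ gives $(1-a_{\chi})(\log\xi+1)$; bounding $\log\xi \le \log(x+h) = \log x + \log(1+\log x/\sqrt x) \le \log x + \log x/\sqrt x$ yields exactly the $(1-a_{\chi})(\log x + 1 + \log x/\sqrt x)$ term. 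Finally, the infinite sum: by Lemma \ref{lem:sumx} it equals a closed form dominated (for $x\ge 2$, hence certainly for $\log x \ge 10$) by something of size $O(x^{-1})$; more crudely, termwise $\big|\sum_m x^{1-2m-a_{\chi}}/((2m+a_{\chi})(2m-1+a_{\chi}))\big| \le \sum_{m\ge 1} x^{-1-2(m-1)}/2 \le x^{-1}$, and the same for $x+h$, so the increment of this piece is at most $2x^{-1}$, and after dividing by $h = \sqrt x \log x$ with $\log x \ge 10$ and $x \ge e^{10}$ this is comfortably below the claimed $7\cdot 10^{-5}/h$ — indeed one checks $2x^{-1}/h \le 2/(x\sqrt x \log x)$ is tiny, but the cleaner route is to note the increment of the tail is bounded by a constant times $x^{-2}$ (the function decays like $x^{-1}$ with derivative $O(x^{-2})$), so the increment over an interval of length $h$ is $O(h x^{-2}) = O(x^{-3/2}\log x)$, and dividing by $h$ gives $O(x^{-2})$, which for $x \ge e^{10}$ is well under $7\cdot 10^{-5}/h$.

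Assembling the three pieces with the triangle inequality gives
\[
    \left|\frac{\mathfrak{c}(x+h,\chi) - \mathfrak{c}(x,\chi)}{h}\right|
    \le \frac{a_{\chi}}{x} + (1-a_{\chi})\left(\log x + 1 + \frac{\log x}{\sqrt x}\right) + \frac{7\cdot 10^{-5}}{h},
\]
as claimed, where the last constant absorbs the (genuinely much smaller) contribution from the infinite sum after a short numerical verification using $\log x \ge 10$.

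The main obstacle I anticipate is being careful with the term $-(1-a_{\chi})x\log x$: this is the only piece that is not already small, and one must make sure the mean value argument is applied cleanly so that the bound $\log x + 1 + \log x/\sqrt x$ (rather than something slightly larger) comes out, which forces the estimate $\log(x+h) \le \log x + \log x/\sqrt x$ to be used in exactly the right place. Everything else — the logarithmic term and the tail sum — is routine and produces only negligible contributions, which is why they can be lumped into the two clean error terms $a_{\chi}/x$ and $7\cdot 10^{-5}/h$.
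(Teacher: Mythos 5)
Your decomposition and the treatment of the first two terms match the paper's proof in substance: the paper expands $\mathfrak{c}(x+h,\chi)-\mathfrak{c}(x,\chi)$ explicitly, and bounds the main piece using $\log(1+u)<u$, which yields exactly $(1-a_{\chi})\bigl(\log x + 1 + \tfrac{\log x}{\sqrt x}\bigr)$; your mean-value-theorem variant for the term $-(1-a_\chi)\,x\log x$ produces the same bound and is a fine substitute. The problem is in your treatment of the infinite sum $S(x) = \sum_{m\geq 1} x^{1-2m-a_\chi}/\bigl((2m+a_\chi)(2m-1+a_\chi)\bigr)$.

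Your first route, $|S(x+h)-S(x)|\leq |S(x+h)|+|S(x)|\leq 2x^{-1}$, does \emph{not} suffice at the bottom of the range: at $x=e^{10}\approx 22026$ we have $2x^{-1}\approx 9.1\cdot 10^{-5}>7\cdot 10^{-5}$, so the claim that this "is comfortably below $7\cdot 10^{-5}/h$" is false. The factor of $2$ from the triangle inequality is precisely what kills it. The fix is the observation the paper uses (implicitly, in the line where the third term is written as $\sum_m\bigl(1-(1+h/x)^{1-2m-a_\chi}\bigr)\cdot\cdots$): each summand of $S$ is nonnegative and decreasing in $x$, so $0\leq S(x+h)\leq S(x)$ and the third term equals $\bigl(S(x)-S(x+h)\bigr)/h\leq S(x)/h$, with no factor of $2$. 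The paper then plugs in the closed form from Lemma \ref{lem:sumx} and checks numerically that this numerator is below $7\cdot 10^{-5}$ (respectively $4\cdot 10^{-10}$) for $\log x\geq 10$. Your second route through the derivative, $|S(x+h)-S(x)|\leq h\sup|S'|$, would also close the gap, but as written it only produces "$O(x^{-2})$"; for an explicit lemma you must pin down the constant, e.g.\ $|S'(x)|=\sum_m x^{-2m-a_\chi}/(2m+a_\chi)\leq \tfrac{1}{2}x^{-2}(1-x^{-2})^{-1}\leq 0.51\,x^{-2}$ for $x\geq e^{10}$, and then verify $0.51\,x^{-2}\leq 7\cdot 10^{-5}/h$, which does hold with room to spare. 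Either fix is short; the argument as submitted has a genuine numerical gap.
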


\begin{proof}
For any $0<h<x$, we have
\begin{align*}
    \frac{\mathfrak{c}(x+h,\chi) - \mathfrak{c}(x,\chi)}{h} 
    &= \frac{a_{\chi}}{h} \log\!\left(1+\frac{h}{x}\right) 
    - (1-a_{\chi}) \left(\frac{x+h}{h}\log\!\left(1+\frac{h}{x}\right) + \log{x}\right) \\
    &\qquad\qquad+ \sum_{m=1}^\infty \left(1 - \left(1+\frac{h}{x}\right)^{1-2m-a_{\chi}}\right) \frac{x^{1-2m-a_{\chi}}}{(2m+a_{\chi})(2m-1+a_{\chi}) h} .
\end{align*}
Apply Lemma \ref{lem:sumx}, $\log(1+x) < x$ for all $x>0$, and $h=\sqrt{x}\log{x}$ to see
\begin{equation*}
\begin{split}
    \left|\frac{\mathfrak{c}(x+h,\chi) - \mathfrak{c}(x,\chi)}{h}\right|
    &\leq \frac{a_{\chi}}{x} + (1-a_{\chi})\left(\log{x} + 1 + \frac{\log{x}}{\sqrt{x}}\right) \\
    &\qquad\qquad\qquad+ \frac{a_{\chi} + (-x)^{a_{\chi}} \tanh^{-1}(x^{-1}) - x^{1-a_{\chi}} \log{\sqrt{1-x^{-2}}}}{h} .
\end{split}
\end{equation*}
To complete the proof, observe that the numerator in the final fraction decreases in $x$ whether $a_{\chi} = 0$ or $a_{\chi} = 1$, so it is is majorised for all $\log{x}\geq 10$ by
\begin{equation*}
    \max\{7\cdot 10^{-5}, 4\cdot 10^{-10}\} = 7\cdot 10^{-5} . \qedhere
\end{equation*}
\end{proof}


Insert Theorem \ref{thm:soz_result}, Lemma \ref{lem:a_chi}, and Lemma \ref{lem:c_chi_diff} into \eqref{eqn:expression} to see that if $\chi$ is a non-principal and primitive Dirichlet character modulo $q\geq 3$, $h=\sqrt{x}\log{x}$, and $x\geq x_0 \geq e^{10}$, then
\begin{align}
    &\left|\frac{\psi_1(x+h,\chi) - \psi_1(x,\chi)}{h} \right| \leq |\mathfrak{a}(\chi)| + \left|\frac{\mathfrak{c}(x+h,\chi) - \mathfrak{c}(x,\chi)}{h}\right| + \bigg|\sum_{\varrho_{\chi}} \frac{(x+h)^{\varrho_{\chi} +1} - x^{\varrho_{\chi} +1}}{h \varrho_{\chi}(\varrho_{\chi}+1)} \bigg| \nonumber\\
    &\qquad\qquad\qquad\qquad< \left(\frac{\log{x}}{8\pi} + \frac{\log{q}}{2\pi} + g_1(x)\right) \sqrt{x}\log{x} + k_2(x_0) \sqrt{x}\log{q} + g_2(q) , \label{eqn:smoothed_difference_precise}
\end{align}
in which
\begin{align*}
    g_1(x) &= k_1(x_0) + \frac{1}{\sqrt{x}} + \frac{1}{x}, \\
    g_2(q) &= 
    \begin{cases}
        317.501 + 0.593\log\log{q} (\log{q})^2 + 0.0758 \sqrt{q}\log{q} + 2.751\log{q} &\text{if }q<10^{30}, \\
        1.777 + 0.593\log\log{q} (\log{q})^2 + 0.000278 \sqrt{q}\log{q} + \log{q} &\text{if }q\geq 10^{30}.
    \end{cases}
\end{align*}
Now that we have proved this important identity, we are able to complete the result using the three following observations.

\medskip\noindent\textsc{Observation I:} 
First, we restrict our attention to $3 \leq q_0 \leq q < 10^{30}$. Since $g_1(x)$ decreases in $x$, for all $x\geq x_0\geq e^{10}$, we also have
\begin{equation}\label{eqn:what_limits}
    \left|\frac{\psi_1(x+h,\chi) - \psi_1(x,\chi)}{h} \right|
    < \left(\frac{\log{x}}{8\pi} + \frac{\log{q}}{2\pi} + \varsigma_1(x)\right) \sqrt{x}\log{x} + \varsigma_2(x_0) \sqrt{x} ,
\end{equation}
in which 
\begin{equation*}
    \varsigma_1(x) = k_1(x_0) + \frac{1}{\sqrt{x}} + \frac{1}{x} + \frac{g_2(10^{30}-1)}{\sqrt{x}\log{x}}
    \qquad\text{and}\qquad
    \varsigma_2(x_0) =
    \begin{cases}
        30 k_2(x_0) \log{10} & \text{if }k_2(x_0) \geq 0, \\
        k_2(x_0) \log{q_0} & \text{if }k_2(x_0) < 0.
    \end{cases}
\end{equation*}

\medskip\noindent\textsc{Observation II:} 
Second, we restrict our attention to $q \geq 10^{30}$. If $x\geq x_0\geq \max\{e^{10},q\}$, then we may refine \eqref{eqn:smoothed_difference_precise} to tell us
\begin{equation}\label{eqn:what_limits1}
\begin{split}
    &\left|\frac{\psi_1(x+h,\chi) - \psi_1(x,\chi)}{h} \right| \\
    &\qquad\qquad< \left(\frac{\log{x}}{8\pi} + \frac{\log{q}}{2\pi} + \varsigma_3(x)\right) \sqrt{x}\log{x} + 30 \min\{k_2(x_0),0\} \sqrt{x} \log{10} + 1.777,
\end{split}
\end{equation}
in which 
\begin{equation*}
    \varsigma_3(x) = \frac{0.593 \log\log{x} \log{x}}{\sqrt{x}} + k_1(x_0) + \max\{k_2(x_0),0\} + 0.000278 + \frac{2}{\sqrt{x}} + \frac{1}{x}.
\end{equation*}

\medskip\noindent\textsc{Observation III:} 
Finally, suppose $q\geq 3$. If $x\geq x_0\geq \max\{e^{10},q\}$, then
\begin{equation}\label{eqn:what_limits1a}
    \left|\frac{\psi_1(x+h,\chi) - \psi_1(x,\chi)}{h} \right| < \left(\frac{\log{x}}{8\pi} + \frac{\log{q}}{2\pi} + \varsigma_4(x)\right) \sqrt{x}\log{x} + 1.777,
\end{equation}
in which 
\begin{equation*}
    \varsigma_4(x) = \frac{0.593 \log\log{x} \log{x}}{\sqrt{x}} + k_1(x_0) + \max\{k_2(x_0),0\} + 0.0758 + \frac{3.751}{\sqrt{x}} + \frac{1}{x} + \frac{315.724}{\sqrt{x}\log{x}}.
\end{equation*}
 
\medskip\noindent\textsc{Completion:} 
Experimental computations tell us that \eqref{eqn:what_limits1a} will yield a better numerical outcome when $k_2(x_0) \geq 0$ and combining \eqref{eqn:what_limits}, \eqref{eqn:what_limits1} will yield the better outcome when $k_2(x_0) < 0$. Therefore, it follows from \eqref{eqn:what_limits}, \eqref{eqn:what_limits1}, and \eqref{eqn:what_limits1a} that the result holds with
\begin{align*}
    k_5(x) &= 
    \begin{cases}
        \varsigma_4(x) & \text{if }k_2(x_0) \geq 0 , \\
        \varsigma_5(x) & \text{if }k_2(x_0) < 0 ,
    \end{cases}
    \quad\text{and}\quad
    k_6(x) = 
    \begin{cases}
        0 & \text{if }k_2(x_0) \geq 0 ,\\
        k_2(x_0) \log{3} & \text{if }k_2(x_0) < 0 .
    \end{cases}
\end{align*}
in which 
\begin{align*}
    \varsigma_5(x) 
    &= k_1(x_0) + 0.000278 + \frac{2}{\sqrt{x}} + \frac{1}{x} + \frac{\max\{g_2(10^{30}-1), 0.593\log\log{x} (\log{x})^2\}}{\sqrt{x}\log{x}} .
\end{align*}

\begin{remark}
If we restrict our attention to the smallest moduli $3\leq q \leq 10^4$, then we can use Theorem \ref{thm:soz_result_small_moduli} (instead of Theorem \ref{thm:soz_result}) in the preceding logic to see that 
\begin{equation}\label{eqn:what_limits2}
    \left|\frac{\psi_1(x+h,\chi) - \psi_1(x,\chi)}{h} \right|
    < \left(\frac{\log{x}}{8\pi} + \frac{\log{q}}{2\pi} + \varsigma_6(x)\right) \sqrt{x}\log{x} + \varsigma_7(x_0) \sqrt{x} 
\end{equation}
for all $x\geq x_0\geq 1.05\cdot 10^7$, in which 
\begin{equation*}
    \varsigma_6(x) = \widetilde{k_1}(x_0) + \frac{1}{\sqrt{x}} + \frac{1}{x} + \frac{g_2(10^4)}{\sqrt{x}\log{x}}
    \qquad\text{and}\qquad
    \varsigma_7(x_0) =
    \begin{cases}
        4 \widetilde{k_2}(x_0) \log{10} & \text{if }\widetilde{k_2}(x_0) \geq 0, \\
        \widetilde{k_2}(x_0) \log{3} & \text{if }\widetilde{k_2}(x_0) < 0.
    \end{cases}
\end{equation*}
\end{remark}

\subsection{A refinement}

It is clear from the preceding case analyses that we only require $x_0\geq q$ in some cases. Therefore, following the logic at the beginning of this section, one can use \eqref{eqn:what_limits2} to refine Theorem \ref{thm:psi_chi_expl_fmla} into the following result for a fixed range of $q$. 

\begin{theorem}\label{thm:psi_chi_expl_fmla_small_moduli}
If $x\geq x_0\geq 1.05\cdot 10^7$, the GRH is true, and $\chi\neq\chi_0$ modulo $q$ such that $3\leq q \leq 10\,000$, then
\begin{equation*}
\begin{split}
    |\psi(x,\chi)| 
    &< \left(\frac{\log{x}}{8\pi} + \frac{\log{q}}{2\pi} + \widetilde{\Omega_0}(x_0)\right) \sqrt{x}\log{x} + \widetilde{\Omega_1}(x_0) \sqrt{x} + \widetilde{\Omega_2}(x_0) ,
\end{split}
\end{equation*}
in which $\widetilde{\Omega_0}(x) = k_3(x) + \varsigma_6(x)$, $\widetilde{\Omega_2}(x) = - k_4(x)$, and
\begin{equation*}
    \widetilde{\Omega_1}(x) = \varsigma_7(x) + \frac{(0.5 + 1.12\log{x})\log{x}}{\sqrt{x}} .
\end{equation*}
\end{theorem}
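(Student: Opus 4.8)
\textbf{Proof proposal for Theorem \ref{thm:psi_chi_expl_fmla_small_moduli}.}
The plan is to repeat, essentially verbatim, the three-step chain that produced Theorem \ref{thm:psi_chi_expl_fmla}, but with every input replaced by its $q\le 10\,000$ refinement. First I would start from the identity \eqref{eqn:Step1} together with the short-interval bound: by \eqref{eqn:Step1wChoice} (which is Theorem \ref{thm:CH_D_refined}, valid for $x\ge x_0\ge e^{10}$ and in particular for $x\ge x_0\ge 1.05\cdot 10^7$), one has
\begin{equation*}
    \left|\psi(x,\chi) - \frac{\psi_1(x+\sqrt{x}\log{x},\chi) - \psi_1(x,\chi)}{\sqrt{x}\log{x}} \right| < k_3(x_0)\sqrt{x}\log{x} - k_4(x_0).
\end{equation*}
So it remains to bound the smoothed difference $\bigl(\psi_1(x+h,\chi)-\psi_1(x,\chi)\bigr)/h$ for primitive non-principal $\chi$ with $3\le q\le 10\,000$.

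For the primitive case I would use \eqref{eqn:what_limits2}, which is precisely the $q\le 10^4$ analogue of the bound behind Theorem \ref{thm:smoothed_difference}: it asserts that for $x\ge x_0\ge 1.05\cdot 10^7$,
\begin{equation*}
    \left|\frac{\psi_1(x+h,\chi) - \psi_1(x,\chi)}{h} \right|
    < \left(\frac{\log{x}}{8\pi} + \frac{\log{q}}{2\pi} + \varsigma_6(x)\right) \sqrt{x}\log{x} + \varsigma_7(x_0) \sqrt{x},
\end{equation*}
with $\varsigma_6,\varsigma_7$ as defined in the remark following \eqref{eqn:what_limits2} (these in turn rest on Theorem \ref{thm:soz_result_small_moduli} and Lemmas \ref{lem:a_chi}, \ref{lem:c_chi_diff}). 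Combining this with the displayed short-interval inequality and using that $\varsigma_6(x)$ is decreasing in $x$ (so $\varsigma_6(x)\le\varsigma_6(x_0)$) gives, for primitive $\chi$,
\begin{equation*}
    |\psi(x,\chi)| < \left(\frac{\log{x}}{8\pi} + \frac{\log{q}}{2\pi} + k_3(x_0)+\varsigma_6(x_0)\right)\sqrt{x}\log{x} + \varsigma_7(x_0)\sqrt{x} - k_4(x_0).
\end{equation*}

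To pass from primitive to arbitrary non-principal $\chi$, I would invoke \eqref{eqn:psiNonPrimitive} and \eqref{eqn:psi_psi_0} exactly as in the proof of Theorem \ref{thm:psi_chi_expl_fmla}: if $\chi$ is induced by the primitive $\chi^*$ (also modulo a divisor $\le 10^4$ of $q$), then $|\psi(x,\chi)-\psi(x,\chi^*)| \le |\psi(x,\chi)-\psi_0(x,\chi)| + |\psi_0(x,\chi)-\psi_0(x,\chi^*)| + |\psi_0(x,\chi^*)-\psi(x,\chi^*)| \le \tfrac{\log x}{2} + 1.12(\log q)\log x + \tfrac{\log x}{2}$, which is absorbed into the $\sqrt{x}$-term by writing $(0.5+1.12\log x)(\log x)/\sqrt{x}$ times $\sqrt{x}$. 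Setting $\widetilde{\Omega_0}(x)=k_3(x)+\varsigma_6(x)$, $\widetilde{\Omega_2}(x)=-k_4(x)$, and $\widetilde{\Omega_1}(x)=\varsigma_7(x)+(0.5+1.12\log x)(\log x)/\sqrt{x}$ yields the stated bound, and one reads off Table \ref{tab:pntpaps_small_moduli}'s companion quantities (via Tables \ref{tab:soz_results} and \ref{tab:CH_D_refined}) for the numerics.

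I expect no genuine obstacle here — the theorem is a bookkeeping reassembly of already-established refined ingredients. The one point requiring a little care is that when reducing a non-primitive $\chi$ to its inducing $\chi^*$ of conductor $q^*\mid q$, one must check $q^*\le 10\,000$ (automatic, since $q^*\le q$) so that Theorem \ref{thm:soz_result_small_moduli} and \eqref{eqn:what_limits2} apply to $\chi^*$, and that the $\log q$ in the main term may be replaced by $\log q$ rather than $\log q^*$ (it only increases the bound, so this is harmless). A secondary nuisance is confirming that the lower threshold $x_0\ge 1.05\cdot 10^7$ is compatible with all invoked results — it is, since $1.05\cdot 10^7 > e^{10}$ covers Theorems \ref{thm:CH_D_refined} and \ref{thm:soz_result}, and it is exactly the threshold in Theorem \ref{thm:soz_result_small_moduli} and \eqref{eqn:what_limits2} — and noting that the condition $x_0\ge q$ is no longer needed precisely because \eqref{eqn:what_limits2} (unlike the general \eqref{eqn:what_limits1a}) has an $x$-only secondary term.
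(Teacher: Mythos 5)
Your proposal is correct and reproduces the route the paper intends for this result: replay the chain behind Theorem \ref{thm:psi_chi_expl_fmla}, substituting \eqref{eqn:what_limits2} for the general smoothed-difference bound of Theorem \ref{thm:smoothed_difference}, and you correctly identify that \eqref{eqn:what_limits2}'s $x$-only secondary term is precisely what allows dropping the $x_0\ge q$ hypothesis. One minor bookkeeping remark: your two applications of \eqref{eqn:psi_psi_0} give $(1+1.12\log q)\log x$ where $\widetilde{\Omega_1}$ carries $(0.5+1.12\log x)\log x$, but the absorption you assert does hold here since $x/q \ge 1.05\cdot 10^7/10^4 > e^{0.5/1.12}$.
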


\subsection{Proof of Theorem \ref{thm:exact_formula}}\label{ssec:objective}

If $x\geq 2$ and $\chi = \chi_0$ modulo $q\geq 3$, then
\begin{equation}\label{eqn:Step0}
    \left|\psi(x,\chi_0)-\psi(x)\right| \leq \sum\limits_{\substack{n \leq x \\ (n,q)>1}} \Lambda(n),
    \qquad\text{where}\qquad
    \psi(x)=\sum_{n\leq x}\Lambda(n) .
\end{equation}
Insert Lemmas \ref{lem:Schoenfeld}-\ref{lem:LambdaSyt} into \eqref{eqn:Step0} to see
\begin{equation}\label{eqn:principal_chi_case}
    \left|\psi(x,\chi_0)-x\right| \leq \frac{\sqrt{x} (\log{x})^2}{8\pi} + 1.12 \log{q}\log{x} ,
    \quad\text{for all}\quad
    x\geq 73.2 .
\end{equation}
Combine \eqref{eqn:principal_chi_case} and Theorem \ref{thm:psi_chi_expl_fmla} to recover Theorem \ref{thm:exact_formula}.

\subsection{Proof of Corollary \ref{cor:exact_formula}}\label{ssec:corollary1}

We require the following simple relationship:
\begin{align*}
    |\psi(x,\chi) - \theta(x,\chi)|
    &\leq \sum_{2\leq k\leq \left\lfloor\tfrac{\log{x}}{\log{2}}\right\rfloor} \theta(x^{\tfrac{1}{k}})
    < \frac{\log{x}}{\log{2}} \theta(\sqrt{x}),
    \qquad\text{where}\qquad
    \theta(x) = \sum_{p\leq x} \log{p} .
\end{align*}
Now, \cite[Cor.~2.1]{Broadbent} tells us that there is a constant $c_{\theta} = 1 + 1.93378\cdot 10^{-8}$ for all $x\geq 0$ such that $\theta(x) \leq c_{\theta} x$. It follows that
\begin{equation}\label{eqn:the_connection}
    \frac{|\psi(x,\chi) - \theta(x,\chi)|}{\sqrt{x}\log{x}} 
    < \frac{c_{\theta}}{\log{2}} 
    < 1.44270 .
\end{equation}
Therefore, Corollary \ref{cor:exact_formula} is a straightforward consequence of Theorem \ref{thm:exact_formula} and \eqref{eqn:the_connection}.

\section{The prime number theorem for primes in arithmetic progressions}\label{sec:corollary2}

In this section, we will prove our explicit versions of the prime number theorem for primes in arithmetic progressions (i.e. Corollaries \ref{cor:pntpaps}-\ref{cor:pntpaps_small_moduli}) using Theorem \ref{thm:exact_formula}. To this end, recall that for all $x\geq x_0\geq \max\{e^{10},q\}$ and any non-principal character $\chi$ modulo $q\geq 3$, we have 
\begin{equation*}
    |\psi(x,\chi)| \leq 
    \left(\frac{\log{x}}{8\pi} + \frac{\log{q}}{2\pi} + \Omega_0(x_0)\right) \sqrt{x}\log{x} + \Omega_1(x_0) \sqrt{x} + \Omega_2(x_0) .
\end{equation*}
Insert this observation into \eqref{eqn:convenient} to see that for all $x\geq x_0 \geq \max\{e^{10},q\}$, we have
\begin{align}
    &\left|\psi(x;q,a) - \frac{x}{\varphi(q)}\right|
    \leq \frac{1}{\varphi(q)} \left(\frac{\sqrt{x} (\log{x})^2}{8\pi} + 1.12 (\log{x})^2 + \sum_{\chi\neq\chi_0} |\psi(x,\chi)| \right) \nonumber\\
    &< \left(\frac{\log{x}}{8\pi\varphi(q)} + \left(1 - \frac{1}{\varphi(q)}\right)\frac{\log{q}}{2\pi} + \Omega_0(x_0) + \frac{\max\{\Omega_1(x_0),0\}}{\log{x_0}} + \frac{0.56\log{x_0}}{\sqrt{x_0}}\right) \sqrt{x}\log{x} + \Omega_2(x_0) \nonumber\\
    &:=  \left(\frac{\log{x}}{8\pi\varphi(q)} + \left(1 - \frac{1}{\varphi(q)}\right)\frac{\log{q}}{2\pi} + \Omega_3(x_0)\right) \sqrt{x}\log{x} + \Omega_2(x_0) , \label{eqn:psi_xqa}
\end{align}
since there are $\varphi(q)$ Dirichlet characters modulo $q$ and $\varphi(q)\geq 2$. Use \cite[Cor.~2.1]{Broadbent} in a similar way to how we used it in Section \ref{ssec:corollary1} to see
\begin{equation*}
    |\psi(x;q,a) - \theta(x;q,a)|
    \leq 1.44270 \sqrt{x}\log{x} .
\end{equation*}
Now, combine the preceding observations to see that for all $x\geq x_0 \geq \max\{e^{10},q\}$, we have
\begin{align}
    \left|\theta(x;q,a) - \frac{x}{\varphi(q)}\right|
    &< \left(\frac{\log{x}}{8\pi\varphi(q)} + \left(1 - \frac{1}{\varphi(q)}\right)\frac{\log{q}}{2\pi} + \Omega_3(x_0) + 1.44270\right) \sqrt{x}\log{x} + \Omega_2(x_0) \nonumber\\
    &:= \left(\frac{\log{x}}{8\pi\varphi(q)} + \left(1 - \frac{1}{\varphi(q)}\right)\frac{\log{q}}{2\pi} + \Omega_4(x_0)\right) \sqrt{x}\log{x} + \Omega_2(x_0) \label{eqn:theta_xqa}
\end{align} 
Finally, partial summation yields the relationship
\begin{equation}\label{eqn:p_summ}
    \pi(x;q,a) = \frac{\theta(x;q,a)}{\log{x}} + \int_2^x \frac{\theta(t;q,a)}{t(\log{t})^2}\,dt .
\end{equation}
Before we insert the asymptotic \eqref{eqn:theta_xqa} into \eqref{eqn:p_summ}, note that
\begin{equation*}
    \frac{x}{\log{x}} + \int_2^x \frac{dt}{(\log{t})^2} 
    = \Li(x) + \frac{2}{\log{2}}
    \quad\text{and}\quad
    \int_2^x \frac{x^{-\frac{1}{2}} dt}{\sqrt{t}\log{t}} 
    = \frac{\Ei\!\left(\frac{\log{x}}{2}\right) - \Ei\!\left(\frac{\log{2}}{2}\right)}{\sqrt{x}} ;
\end{equation*}
the latter decreases for all $\log{x} \geq 10$. It follows that for all $x\geq x_0\geq \max\{e^{10},q\}$, we have
\begin{align}
    \left|\pi(x;q,a) - \frac{\Li(x)}{\varphi(q)}\right|
    &< \frac{\sqrt{x}\log{x} + \int_2^x \frac{dt}{\sqrt{t}}}{8\pi\varphi(q)} \nonumber\\ 
    &\qquad + \left(\left(1 - \frac{1}{\varphi(q)}\right)\frac{\log{q}}{2\pi} + \Omega_4(x_0)\right) \left(1 + \frac{\Ei\!\left(\frac{\log{x_0}}{2}\right) - \Ei\!\left(\frac{\log{2}}{2}\right)}{\sqrt{x_0}} \right) \sqrt{x} \nonumber\\
    &\qquad+ \Omega_2(x_0) \left(\frac{1}{\log{x}} + \int_2^x \frac{dt}{t(\log{t})^2} \right) + \frac{1}{\log{2}} \nonumber\\
    &\leq \left(\frac{\log{x}}{8\pi\varphi(q)} + \frac{\Omega_5(x_0)\log{q}}{2\pi} + \Omega_6(x_0)\right)\sqrt{x} + \Omega_7(x_0) , \label{eqn:pi_xqa}
\end{align} 
in which
\begin{align*}
    \Omega_5(x_0) 
    = 1 + \frac{\Ei\!\left(\frac{\log{x_0}}{2}\right) - \Ei\!\left(\frac{\log{2}}{2}\right)}{\sqrt{x_0}}, \quad
    \Omega_6(x_0) 
    &= \frac{1}{8\pi} + \Omega_4(x_0)\Omega_5(x_0), \quad\text{and}\\
    \Omega_7(x_0) &= \frac{1 + \Omega_2(x_0)}{\log{2}} .
\end{align*}
Admissible computations for each of the $\Omega_i(x_0)$ that appear in  \eqref{eqn:psi_xqa}, \eqref{eqn:theta_xqa}, and \eqref{eqn:pi_xqa} at certain choices of $x_0$ are presented in Table \ref{tab:omega_values}. These computations complete Corollary \ref{cor:pntpaps}, by taking
\begin{equation*}
    a_1 = \Omega_5(x_0), \quad
    a_2 = \Omega_6(x_0), \quad
    a_3 = \Omega_7(x_0), \quad
    a_4 = \Omega_4(x_0), \quad
    a_5 = \Omega_2(x_0), \quad
    a_6 = \Omega_3(x_0).
\end{equation*}

\begin{table}[]
    \centering
    \begin{tabular}{c|cccccc}
        $\log{x_0}$ & $\Omega_2(x_0)$ & $\Omega_3(x_0)$ & $\Omega_4(x_0)$ & $\Omega_5(x_0)$ & $\Omega_6(x_0)$ & $\Omega_7(x_0)$ \\
        \hline
        $10$ & $-8.31179\cdot 10^{3}$ & $7.84909$ & $9.29179$ & $1.27146$ & $11.85396$ & $-1.19899\cdot 10^{4}$ \\
        $20$ & $-3.15402\cdot 10^{6}$ & $4.89427$ & $6.33697$ & $1.11315$ & $7.0938$ & $-4.55028\cdot 10^{6}$ \\
        $30$ & $-8.10901\cdot 10^{8}$ & $4.22563$ & $5.66833$ & $1.07187$ & $6.11552$ & $-1.16988\cdot 10^{9}$ \\
        $40$ & $-1.64282\cdot 10^{11}$ & $3.87641$ & $5.31911$ & $1.0528$ & $5.63974$ & $-2.37009\cdot 10^{11}$ \\
        $50$ & $-3.10314\cdot 10^{13}$ & $3.67311$ & $5.11581$ & $1.04175$ & $5.36916$ & $-4.47689\cdot 10^{13}$ \\
        $60$ & $-5.59133\cdot 10^{15}$ & $3.52631$ & $4.96901$ & $1.03453$ & $5.18036$ & $-8.06659\cdot 10^{15}$ \\
        $70$ & $-9.74775\cdot 10^{17}$ & $3.41787$ & $4.86057$ & $1.02944$ & $5.04344$ & $-1.4063\cdot 10^{18}$ \\
        $80$ & $-1.63943\cdot 10^{20}$ & $3.33388$ & $4.77658$ & $1.02566$ & $4.93893$ & $-2.36519\cdot 10^{20}$ \\
        $90$ & $-2.7664\cdot 10^{22}$ & $3.25716$ & $4.69986$ & $1.02274$ & $4.84652$ & $-3.99108\cdot 10^{22}$ \\
        $100$ & $-4.58854\cdot 10^{24}$ & $2.67049$ & $4.11319$ & $1.02042$ & $4.23696$ & $-6.61986\cdot 10^{24}$ \\
        $150$ & $-5.00022\cdot 10^{35}$ & $0.07749$ & $1.52019$ & $1.01352$ & $1.58052$ & $-7.2138\cdot 10^{35}$ \\
        $200$ & $-4.77543\cdot 10^{46}$ & $-1.33174$ & $0.11096$ & $1.0101$ & $0.15187$ & $-6.88948\cdot 10^{46}$ \\
        $250$ & $-4.4118\cdot 10^{57}$ & $-2.96611$ & $-1.52341$ & $1.00807$ & $-1.49591$ & $-6.36488\cdot 10^{57}$ \\
        $500$ & $-1.66057\cdot 10^{112}$ & $-12.17243$ & $-10.72973$ & $1.00402$ & $-10.73303$ & $-2.39569\cdot 10^{112}$
    \end{tabular}
    \caption{Computations for $\Omega_i(x_0)$ at several choices of $x_0$ in \eqref{eqn:psi_xqa}, \eqref{eqn:theta_xqa}, and \eqref{eqn:pi_xqa}.}
    \label{tab:omega_values}
\end{table}

To prove Corollary \ref{cor:pntpaps_small_moduli}, replace any occurrence of $\Omega_0(x)$, $\Omega_1(x)$, $\Omega_2(x)$ from Theorem \ref{thm:psi_chi_expl_fmla} in the preceding definitions with $\widetilde{\Omega_0}(x)$, $\widetilde{\Omega_1}(x)$, $\widetilde{\Omega_2}(x)$ from Theorem \ref{thm:psi_chi_expl_fmla_small_moduli}. 

\appendix

\section{The Chebotar\"{e}v density theorem}\label{sec:cdt}

Let $\mathbb{K}\subseteq \mathbb{L}$ be a Galois extension of number fields such that $n_{\mathbb{L}} = [\mathbb{L} : \mathbb{Q}]$, $\Delta_{\mathbb{L}}$ be the discriminant of $\mathbb{L}$, $\mathfrak{P}$ be a prime ideal of $\L$ that lies above a non-ramified prime ideal $\mathfrak{p}$ of the ring of integers $\mathcal{O}_{\mathbb{K}}$, and $\sigma_{\mathfrak{p}}$ be the Artin symbol associated to $\mathfrak{p}$; this is the conjugacy class of the Frobenius automorphism corresponding to $\mathfrak{P}/\mathfrak{p}$. If $\mathcal{C}$ is any conjugacy class in $\mathcal{G} = \Gal(\L/\K)$, then we let
\begin{equation*}
    \pi_{\mathcal{C}}(x) = \#\{\mathfrak{p}\subset\mathcal{O}_{\mathbb{K}} : \text{$\mathfrak{p}$ are prime ideals that do \textit{not} ramify in $\mathbb{L}$ and satisfy $\sigma_{\mathfrak{p}} = \mathcal{C}$}\}.
\end{equation*}
The celebrated Chebotar\"{e}v density theorem tells us that
\begin{equation}\label{eqn:CDT}
    \pi_{\mathcal{C}}(x) \sim \frac{\#\mathcal{C}}{\#\mathcal{G}} \Li(x) := \frac{\#\mathcal{C}}{\#\mathcal{G}} \int_2^x \frac{dt}{\log{t}} .
\end{equation}
Insert $\mathbb{L} = \mathbb{Q}(\omega_q)$ and $\mathbb{K} = \mathbb{Q}$, where $\omega_q$ is the $q^\text{th}$ root of unity, into \eqref{eqn:CDT} to retrieve the prime number theorem for primes in arithmetic progressions. In particular, under these choices, we have $n_{\mathbb{L}} = \#\mathcal{G} = \varphi(q)$ and $\#\mathcal{C} = 1$ for each conjugacy class $\mathcal{C}\subset\mathcal{G}$, so the prime number theorem for primes in arithmetic progressions tells us $\pi(x;q,a) \sim \Li(x)/\varphi(q)$ and is a special case of the Chebotar\"{e}v density theorem. Assuming the generalised Riemann hypothesis for the Dedekind zeta-function, Greni\'{e} and Molteni proved the latest explicit version of the Chebtar\"{e}v density theorem in \cite[Cor.~1]{Grenie2019}; their results improve a result from Osterl\'{e} \cite{Osterle}. In the end, for all $x\geq 2$, they prove
\begin{equation}\label{eqn:GM_CDT}
    \left|\pi_{\mathcal{C}}(x) - \frac{\#\mathcal{C}}{\#\mathcal{G}} \Li(x)\right|
    \leq \frac{\#\mathcal{C}}{\#\mathcal{G}} \sqrt{x} \left[\left(\frac{\log{x}}{8\pi} + \frac{1}{4\pi} + \frac{6}{\log{x}}\right) n_{\mathbb{L}} + \left(\frac{1}{2\pi} + \frac{3}{\log{x}}\right) \log{|\Delta_{\mathbb{L}}|}\right] .
\end{equation}
So, if $\mathbb{L}/\mathbb{K} = \mathbb{Q}(\omega_q)/\mathbb{Q}$, then $\Delta_{\mathbb{L}} = (-1)^{\tfrac{\varphi(q)}{2}} q^{\varphi(q)} \prod_{p|q} p^{-\tfrac{\varphi(q)}{p-1}}$ (see \cite[Prop.~2.7]{Washington}) and \eqref{eqn:GM_CDT} becomes
\begin{equation}\label{eqn:GM_CDT_sc}
\begin{split}
    \left|\pi(x;q,a) - \frac{\Li(x)}{\varphi(q)} \right|
    &\leq \left(\frac{\log{x}}{8\pi} + \left(1 + \frac{3}{\log{x}}\right)\frac{\log{q}}{2\pi} + \frac{1}{4\pi} + \frac{6}{\log{x}}\right) \sqrt{x} \\
    &\qquad\qquad\qquad\qquad\qquad\qquad- \sqrt{x} \left(\frac{1}{2\pi} + \frac{3}{\log{x}}\right) \sum_{p|q} \frac{\log{p}}{p-1} .
\end{split}
\end{equation} 

\bibliographystyle{amsplain}
\bibliography{refs}

\end{document}